\newtheorem{theorem}{Theorem}[section]
\newtheorem{proposition}[theorem]{Proposition}
\newtheorem{lemma}[theorem]{Lemma}
\theoremstyle{definition}    
\newtheorem{definition}[theorem]{Definition}
\theoremstyle{remark}
\newtheorem{remark}[theorem]{Remark}
\newtheorem{example}[theorem]{Example}
\newtheorem{examples}[theorem]{Examples}
\newcommand\A{\mathcal{A}}
\renewcommand{\AA}{\mathbb{A}}
\newcommand\M{\mathcal{M}}
\newcommand\G{\mathcal{G}}
\renewcommand{\L}{\mathcal{L}}
\renewcommand{\O}{\mathcal{O}}
\newcommand{\ca}{\mathcal}
\newcommand{\E}{\ca{E}}
\newcommand{\R}{\mathbb{R}}
\newcommand{\C}{\mathbb{C}}
\newcommand{\Z}{\mathbb{Z}}
\newcommand{\FF}{{\mathsf{F}}}
\newcommand\lie[1]{\mathfrak{#1}}
\renewcommand{\k}{\lie{k}}
\newcommand{\h}{\lie{h}}
\newcommand{\g}{\lie{g}}
\renewcommand{\a}{\mathsf{a}}
\newcommand{\on}{\operatorname}
\newcommand{\At}{ \on{At} }
\newcommand{\Aut}{ \on{Aut} } 
\newcommand{\Gau}{ \on{Gau} }
 \newcommand{\aut}{ \mf{aut} } 
 \newcommand{\gau}{ \mf{gau} }
\newcommand{\Ad}{ \on{Ad} }
\newcommand{\ad}{\on{ad}}
\newcommand{\SO}{ \on{SO}}
\newcommand{\vol}{  \on{vol}}
\newcommand\qu{/\kern-.7ex/} 
\newcommand{\lra}{\longrightarrow}
\newcommand{\hra}{\hookrightarrow}
\renewcommand{\d}{{\mathsf{d}}}
\newcommand{\ol}{\overline}
\newcommand{\f}{\frac}
\newcommand{\p}{\partial}
\renewcommand{\l}{\langle}
\renewcommand{\r}{\rangle}
\newcommand\hh{{\f{1}{2}}}
\newcommand{\eeq}{\end{eqnarray*}}
\newcommand{\beq}{\begin{eqnarray*}}
\newcommand{\pr}{\on{pr}}
\newcommand{\wh}{\widehat}
\newcommand{\wt}{\widetilde}
\newcommand{\mf}{\mathfrak}
\newcommand{\rra}{\rightrightarrows}
\newcommand{\GL}{\on{GL}}
\newcommand{\Vect}{\on{Vect}}
\newcommand{\Diff}{\on{Diff}}
\newcommand{\PSL}{\on{PSL}}
\newcommand{\PGL}{\on{PGL}}
\newcommand{\SL}{\on{SL}}
\newcommand{\RP}{\R\!\on{P}}
\renewcommand{\S}{\ca{S}}
\newcommand{\vir}{\mf{vir}}
\newcommand{\CC}{\mathsf{C}}
\newcommand{\PP}{\mathsf{P}}
\newcommand{\ignore}[1]{}
\newcommand{\ez}{\mathsf{e}}
\newcommand{\sz}{\mathsf{s}}
\newcommand{\tz}{\mathsf{t}}
\newcommand{\oz}{\mathsf{o}}
\newcommand{\gz}{\mathsf{g}}
\renewcommand{\subset}{\subseteq}
\newcommand{\DD}{\mathbb{D}}
\newcommand{\HH}{\mathbb{H}}
\newcommand{\pS}{\partial{\Sigma}}
\renewcommand{\supset}{\supseteq}
\newcommand{\al}{\alpha}
\newcommand{\az}{\mathsf{a}}
\newcommand{\zz}{\phantom{\cdot}^0 }
\newcommand{\bb}{\phantom{\cdot}^b }
\begin{document}
\sloppy

\title[Teichm\"uller spaces for surfaces with boundary]{Symplectic geometry of Teichm\"uller spaces\\ for surfaces with ideal boundary}
\author{Anton Alekseev}
\author{Eckhard Meinrenken}

\begin{abstract}	
A hyperbolic 0-metric on a surface with boundary is a hyperbolic metric on its interior, exhibiting the boundary behavior of the standard metric on the Poincar\'e disk.  Consider the infinite-dimensional Teichm\"uller spaces of hyperbolic 0-metrics on oriented surfaces with boundary, up to  diffeomorphisms fixing the boundary and homotopic to the identity. We show that these spaces have natural  symplectic structures,
depending only on the choice of an invariant metric on $\mf{sl}(2,\R)$. We prove that these Teichm\"uller spaces 
are Hamiltonian Virasoro spaces for the action of the universal cover of the group of diffeomorphisms of the boundary.  
We give an explicit formula for the Hill potential on the boundary defining the moment map. Furthermore, using Fenchel-Nielsen 
parameters we prove a Wolpert formula for the symplectic form, leading to global Darboux 
coordinates on the Teichm\"uller space. 
 \end{abstract}

\maketitle
\tableofcontents

\section{Introduction}
A hyperbolic structure on a compact, oriented surface
$\Sigma$ without boundary may be described by an atlas with oriented charts taking values in the Poincar\'{e} disk $\DD$, with constant transition functions given by orientation preserving isometries of $\DD$. The same definition may be used for surfaces $\Sigma$ 
\emph{with boundary}, using as the model space the \emph{closed} Poincar\'{e} disk $\ol{\DD}$. Given a hyperbolic structure, the interior of the surface acquires a 
hyperbolic metric, exhibiting the same boundary behaviour as the standard metric on the 
Poincar\'{e} disk. Metrics of this type are known as \emph{conformally compact hyperbolic metrics} or \emph{hyperbolic 0-metrics}.
The boundary components are regarded as a boundary at infinity, called \emph{ideal boundary}. One pictures $\Sigma$ as a surface with funnel ends, also known as \emph{trumpets}: 
\begin{center}
	\includegraphics[scale=0.6]{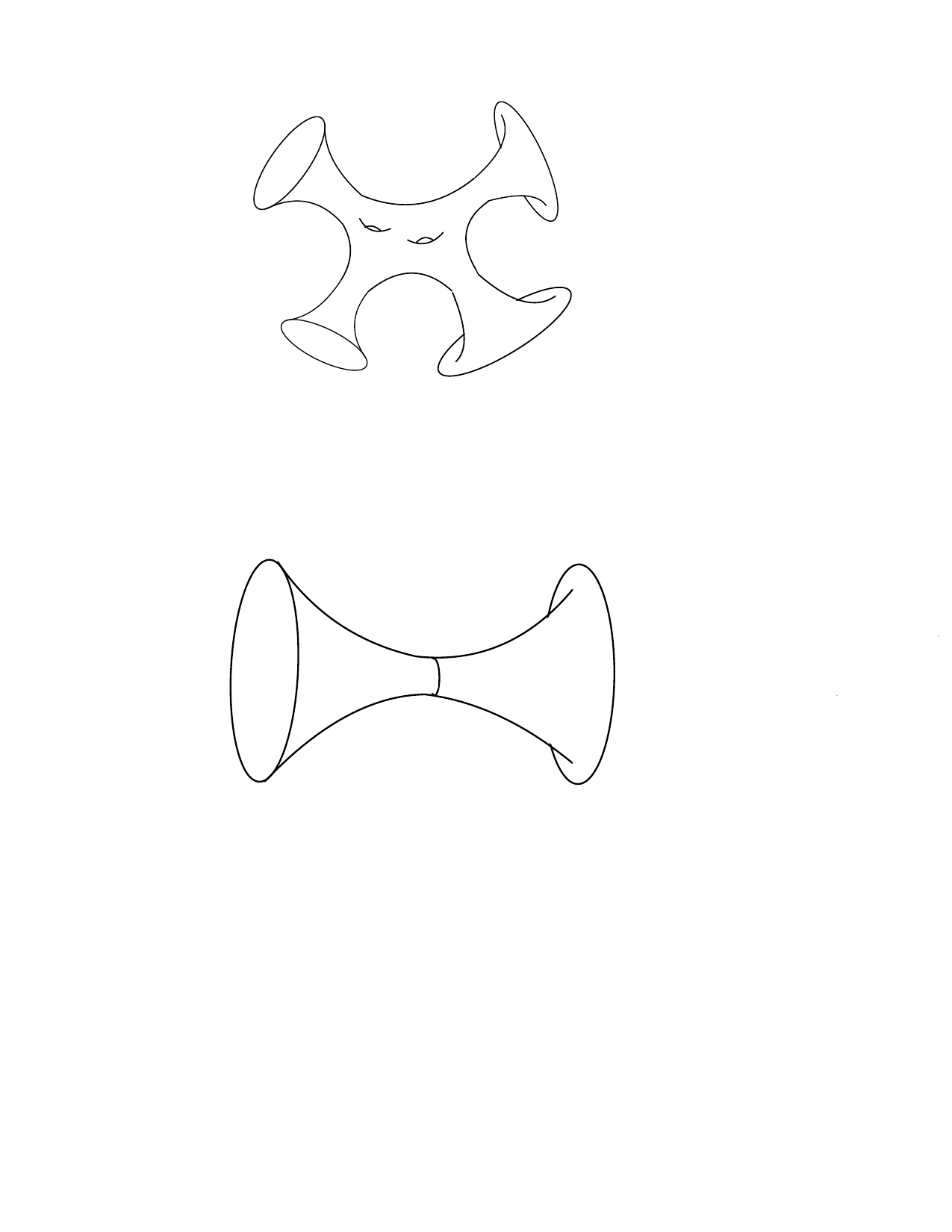}
\end{center}

In this paper, we consider the Teichm\"uller space of hyperbolic structures, 
\[ \on{Teich}(\Sigma)=\on{Hyp}(\Sigma)/\zz\Diff_\oz(\Sigma),\]
where $\zz\Diff_\oz(\Sigma)$ are the diffeomorphisms fixing the boundary and isotopic to the identity. If $\p\Sigma
\neq\emptyset$, the space $\on{Teich}(\Sigma)$ is infinite-dimensional. It has a 
residual action of the universal cover of the group $\Diff_\oz(\pS)$ of orientation preserving diffeomorphisms of the boundary, 
and of the mapping class group  $\on{MCG}(\Sigma)$. 
The infinite-dimensional Teichm\"uller space, and corresponding Riemann moduli space $\M(\Sigma)=\on{Teich}(\Sigma)/\on{MCG}(\Sigma)$, have been  
studied in the literature from the perspective of complex geometry and quasi-conformal mappings. See  for example
Bers \cite[Section 19]{ber:fin}, Thurston \cite[Remark 4.6.17]{thu:3dim}, and the work of Takhtajan-Teo \cite{tak:wei,tak:wei2}; see Schippers-Staubach \cite{schippers2023weilpetersson} for a comprehensive overview. However, it appears that the symplectic aspects of this 
space have been neglected. We will show: 
\bigskip

\noindent{\bf Theorem A.} The space $\on{Teich}(\Sigma)$ has a natural (weak) symplectic structure, invariant under the action of the mapping class group and of the 
universal cover of $\Diff_\oz(\pS)$. 
\bigskip

The action of the universal cover of $\Diff_\oz(\pS)$ admits a moment map, turning $\on{Teich}(\Sigma)$ into a Hamiltonian Virasoro space.  Recall that 
the Virasoro  Lie algebra $\mf{vir}(\p\Sigma)$ is a central extension of the Lie algebra $\Vect(\pS)$ of vector fields on the boundary. 
Its smooth dual space comes with a map to $\R$, and the affine hyperplane 
$\vir(\pS)^*_1$
at level $1$ is identified with the space $\on{Hill}(\pS)$ of Hill operators on the boundary. 
Given a local coordinate $x$ on the boundary, 
Hill operators are of the form $L=\f{d^2}{d x^2}+T$ for a Hill potential $T(x)$.
The action of diffeomorphisms on Hill potentials $T(x)$ is given by the expression 
\[ (\FF^{-1}. T)(x)=\FF'(x)^2\,T(\FF(x))+\hh \S(\FF)(x)\] 
involving the \emph{Schwarzian derivative} $\S$. 

	\bigskip\noindent{\bf Theorem B.} 
	The action of the universal cover of $\Diff_\oz(\pS)$ on $\on{Teich}(\pS)$ is Hamiltonian, with a canonically defined equivariant moment map 
	$\Phi\colon \on{Teich}(\Sigma)\to \on{Hill}(\pS)^-$. 
\bigskip

Here $\on{Hill}(\pS)^-=\vir(\pS)^*_{-1}$ is the affine subspace at level $-1$ (consisting of 
 all $-L$ where $L$ is a Hill operator).
 The quickest description of the moment map  is based on the observation that a hyperbolic structure on $\Sigma$ determines a projective structure on the boundary (by restricting the $\ol{\DD}$-valued charts of $\Sigma$ to the boundary). As is well-known (see e.g. \cite{ovs:pro}), projective structures on oriented 1-manifolds are equivalent to Hill operators.  The moment map takes $[\gz]$ to $-L$ where $L$ is the Hill operator for the projective structure. For an explicit description, choose a local coordinate 
$x$ on the boundary, and complete  to a local coordinate system $x,y$ where $y$ is a boundary defining function. 
Let $a(x)>0$ 
be the positive function obtained from the most singular part of the 
Riemannian volume form: $\d\vol_{\mathsf{g}}=y^{-2} a(x)\d x\wedge \d y+O(y^{-1})$.  Let $k(x,y)$ be the geodesic curvature
of the curve $(x,y)\mapsto (x+t,y)$; one finds $k(x,y)=1+c(x)y^2+O(y^3)$.  

\bigskip\noindent{\bf Theorem C.} 
The boundary Hill potential for the hyperbolic 0-metric $\gz$ is given by 
\[ T=\hh \left(\f{a''}{a}-\f{3}{2}\Big(\f{a'}{a}\Big)^2\right) +\f{a^2}{2} c.\]
 \smallskip
 
If $\Sigma$ has negative Euler characteristic, we may use a pants decomposition 
to obtain a Fenchel-Nielsen parametrization  
\[ \on{Teich}(\Sigma)=(\R_{>0}\times \R)^{3g-3+r}\times (\R_{>0}\times \wt{\Diff}_\oz(S^1))^r.
\]
Let $(\ell_i,\tau_i)$ be the length and twist parameters for the first $3g-3+r$ factors (corresponding to gluing circles), and $\ell_j,\FF_j$ the parameters for the last $r$ factors (corresponding to trumpet ends).  We have the following version of Wolpert's formula for the symplectic form (Theorems \ref{th:fn} and Theorem \ref{th:trumpet}):

\bigskip\noindent{\bf Theorem D.} 
In Fenchel-Nielsen parameters, the symplectic form on $\on{Teich}(\Sigma)$ is given by 
 \[ \omega=	\hh \sum_{i=1}^{3g-3+r} \d \ell_i\wedge \d \tau_i
 -\f{1}{4}\sum_{j=1}^r\d \int_{S^1}\big(\ell_j^2\FF'_j\,\d\FF_j +(\FF'_j)^{-1}\d\FF''_j
 \big)\]
 Here $\d$ is the exterior differential on the diffeomorphism group, and $'$ denotes a derivative in the $S^1$-direction.  
\bigskip

The terms in the second sum are symplectic forms on the factors $\R_{>0}\times \wt{\Diff}_\oz(S^1)$, which may be interpreted as moduli spaces for the trumpet (with one geodesic end and one ideal boundary). We shall show (Proposition \ref{prop:darboux}) how to introduce global Darboux coordinates for the trumpet moduli space,  resulting in global Darboux coordinates for the space $\on{Teich}(\Sigma)$. 

One of the motivations for this work are  recent developments in the physics literature on Jackiw-Teitelboim gravity, notably the articles by Saad-Shenker-Stanford \cite{saa:jt}, Maldacena-Stanford-Yang \cite{mal:con}, Cotler-Jensen-Maloney \cite{cot:low}, 
and Stanford-Witten \cite{sta:jt}. The discussion in these articles involves hyperbolic surfaces with funnel ends (`trumpets'), 
using cut-off along `wiggly boundaries'  to create surfaces of  finite volume, leading to a theory governed by a \emph{Schwarzian action}. As shown by the physicists, this relates JT gravity to mathematical concepts such as the  Mirzakhani  recursion formulas for Weil-Petersson volumes, topological recursion and random matrix theory, Duistermaat-Heckman theory for Virasoro coadjoint orbits, and more.  

A second motivation is our program to develop a theory of Hamiltonian Virasoro spaces, analogous to the theory of Hamiltonian loop group spaces \cite{me:lo}. An important example of  such a space is the infinite dimensional moduli space 
\[ \M_G(\Sigma)=\frac{ \{ A \in \Omega^1(\Sigma, \mathfrak{g})|\   \d A+\hh[A,A]=0\}}{ \{g\colon \Sigma\to G \colon g|_{\partial \Sigma} =e\} }\]
of flat $G$-connections, where $G$ is a simply connected Lie group with an invariant metric on its Lie algebra. This space has a  symplectic form given by reduction, and the residual 
action of $\on{Map}(\p\Sigma,G)$ is Hamiltonian, with affine moment map given by the pullback of the connection, $[A]\mapsto \iota_{\pS}^*A$. It is natural to have a similar example for the Virasoro setting; in fact the two situations are related by Drinfeld-Sokolov reduction. 
As shown in \cite{al:mom}, Hamiltonian loop group spaces with proper moment map are equivalent to finite-dimensional quasi-Hamiltonian $G$-spaces; the results of  \cite{al:coad} pave the way for a similar correspondence for Hamiltonian Virasoro spaces.

Let us briefly summarize our construction of the symplectic form on $\on{Teich}(\Sigma)$.
By a famous result of Goldman \cite{gol:top} and Hitchin \cite{hit:self}, the  
Teichm\"uller space for a surface $\Sigma$ \emph{without boundary}, of genus $\gz\ge 2$, 
is a moduli space $\A_{\on{flat}}(P)/\Gau(P)$ of flat connections on a principal $G$-bundle $P\to \Sigma$ of Euler number $2\mathsf{g}-2$. In particular, the Weil-Petersson symplectic form 
is obtained by reduction of the Atiyah-Bott symplectic structure on the space of connections. There does not seem to be an immediate generalization of this result to the case 
of non-empty boundary. Instead, motivated by the theory of geometric structures \cite{gol:geo}
we take as our starting point is a principal $G$-bundle $P\to \Sigma$ together with a $G$-equivariant morphism 
$\sigma\colon P\to \ol{\DD}$, called a \emph{developing section}. For suitable choice of $(P,\sigma)$, we define 
\[ \wh{\on{Teich}}(\Sigma)=\A_{\on{flat}}^{\on{pos}}(P)/\Aut_\oz(P,\p P,\sigma),\]
the quotient of the space of flat connections satisfying a certain positivity condition with respect to $\sigma$, by the identity component of automorphisms preserving $\sigma$ and 
trivial along the 
boundary. (This space may be interpreted as elements of $\on{Teich}(\Sigma)$ together with 
developing sections for the projective structure on the boundary.) We show that this space is a symplectic quotient for the Atiyah-Bott symplectic structure on $\A^{\on{pos}}(P)$. 
It comes with a residual action of $\Gau(\p P,\p\sigma)$, the gauge transformations of $\p P=P|_{\pS}$ preserving $\p\sigma=\sigma|_{\p P}$. We prove that the latter action  
is Hamiltonian, with moment map image a single coadjoint orbit $\O$, and 
\[ \on{Teich}(\Sigma)=\big(\wh{\on{Teich}}(\Sigma)\times \O^-
\big)\qu \Gau(\p P,\p\sigma)\]
(a symplectic reduction). This defines the symplectic structure on $\on{Teich}(\Sigma)$ (Theorem A). The moment map (Theorem B) is obtained by explicit calculation, beginning with the moment map for the action of the full group of automorphisms on the space $\A(P)$. The relevant background material on the Atiyah-Bott construction is provided in the appendix.

The structure of the paper is as follows. In Section 2, we recall basic definitions and properties of hyperbolic structures on surfaces with boundary,
and introduce infinite dimensional Teichm\"uller spaces ${\rm Teich}(\Sigma)$. In Section 3, using the moving coframe formalism of \'{E}. Cartan, we describe the relation between hyperbolic metrics and flat $\mf{sl}(2, \mathbb{R})$ connection 1-forms. We prove local normal forms for coframes; as a by-product this gives a new proof of the local normal form for hyperbolic 0-metrics near the ideal boundary. In Section 4, we use a more global approach, considering  principal ${\rm PSL}(2, \mathbb{R})$ bundles $P\to \Sigma$ equipped with a developing section $\sigma$. For suitable choice of $P$, we exhibit $\on{Teich}(\Sigma)$ as a space of $\sigma$-positive flat connections modulo a subgroup of bundle automorphisms preserving $\sigma$. Section 5 is devoted to a detailed study of this group of automorphisms. In Section 6, we apply the Atiyah-Bott construction to the space of positive connections, and explain how to obtain  ${\rm Teich}(\Sigma)$ by reduction.
In Section 7, we prove that the diffeomorphisms of the boundary act on ${\rm Teich}(\Sigma)$, that this action is Hamiltonian, and that it corresponds to the Virasoro central extension of the diffeomorphism group. In Section 8, we give an explicit description of the symplectic structure in Fenchel-Nielsen parameters, and give a construction of Darboux coordinates on ${\rm Teich}(\Sigma)$ which combines the classical Wolpert formula \cite{wol:sym} with the construction of Darboux coordinates on hyperbolic Virasoro coadjoint orbits \cite{al:bos}. 

\bigskip

{\bf Acknowledgments.} We are grateful to S.~Ballas, D.~Borthwick, O.~ Chekeres, W.~ Goldman, P.~ Hekmati, N.~Higson, Y.~ Loizides, J.-M.~ Schlenker, S.~ Shatashvili, J.~ Sonner, T.~ Strobl, and D.~ Youmans for useful discussions.
Research of A.A.~ was supported in part by the grants 208235 and 200400 and by the National Center for Competence in Research (NCCR) SwissMAP of the Swiss National Science Foundation,
and by the award of the Simons Foundation to the Hamilton Mathematics Institute of the Trinity College Dublin under the program “Targeted Grants to Institutes”. E.M.~ thanks P.~Hekmati for the opportunity to present this work in lectures at the University of Auckland; his research was supported by Discovery Grant RGPIN-2022-05254 from NSERC.

\bigskip

\section{Hyperbolic structures on surfaces with boundary}

\subsection{Hyperbolic and projective structures}
The model space for hyperbolic structures on surfaces $\Sigma$ without boundary is the Poincar\'{e} disk 
\[ \DD=\{z\in \C|\ |z|<1\},\] 
with the action of $G=\on{PSU}(1,1)$ by M\"obius transformations. The $G$-action on $\DD$ extends to the closed Poincar\'{e} disk
$\ol{\DD}$; this will be our model space for surfaces with boundary: 
\begin{definition}\label{def:hyperbolic}
	A \emph{hyperbolic structure} on an oriented surface $\Sigma$ with boundary  $\p\Sigma$ is an oriented atlas with  $\ol{\DD}$-valued charts, with constant transition maps given by elements of $G$. The space of all hyperbolic structures on $\Sigma$  will be denoted 
	$\on{Hyp}(\Sigma)$. 
\end{definition}
%

\begin{remark}\label{rem:schottky}
A hyperbolic structure on $\Sigma$ pulls back to a hyperbolic structure on every covering space of $\Sigma$. 
If $\Sigma$ is compact and connected, then the universal covering space is of the form $\ol{\DD}-\mf{L}$ where $\mf{L}\subset \p\DD$ is a set of \emph{limit points}, and 
	\begin{equation}\label{eq:schottky}
	\Sigma=(\ol{\DD}-\mathfrak{L})/\Gamma
	\end{equation}
	where $\Gamma\subset G$ is a Fuchsian group of Schottky type. See \cite{bor:spec} or \cite{ser:lec}.  
\end{remark}

In a similar way, taking the boundary of the Poincar\'{e} disk as the model space for projective structures, we define: 
\begin{definition}\label{def:projective}
	A \emph{projective structure} on an oriented 1-manifold $\CC$ (without boundary) is an oriented atlas with $\partial\DD$-valued charts, with constant transition maps given by elements of $G$. The space of all projective structures on $\CC$  will be denoted 
	$\on{Proj}(\CC)$. 
\end{definition}
A hyperbolic structure on $\Sigma$ determines a projective structure on the boundary -- one simply restricts the $\ol{\DD}$-valued charts.  This gives a canonical map 
\begin{equation}\label{eq:hypproj}
\on{Hyp}(\Sigma)\to \on{Proj}(\pS).\end{equation}

\subsection{Hyperbolic 0-metrics}
The $G=\on{PSU}(1,1)$-action on the Poincar\'{e} disk preserves the Poincar\'{e} metric, written in polar coordinates $z=r e^{i\varphi}$ as
\begin{equation}\label{eq:poincaredisk}
\f{4}{(1-r^2)^2}(\d r^2+r^2\d\varphi^2). 
\end{equation}
Hence, a hyperbolic structure on $\Sigma$ determines a Riemannian metric $\gz$ on the interior $\on{int}(\Sigma)$, by pulling back \eqref{eq:poincaredisk} under the coordinate charts.  At the boundary $\p\Sigma$, this metric becomes singular in the same way as the metric on $\ol{\DD}$: letting $\varrho$ be any boundary defining function, the product $\varrho^2 \gz$ extends to an ordinary metric on $\Sigma$.
Riemannian metrics with this property may be seen as ordinary Euclidean metrics on the \emph{0-tangent bundle} of Mazzeo-Melrose \cite{maz:the,maz:mer}, i.e.,  the Lie algebroid 
\[ \zz T\Sigma\to \Sigma\]
whose sections are the vector fields on $\Sigma$ that vanish along the boundary.  A given 
 \emph{0-metric} is called hyperbolic if its restriction to the interior is 
hyperbolic in the sense that it has Gauss curvature  $K_{\gz}=-1$. Theorem \ref{th:normalform} below says that all hyperbolic 0-metrics 
arise from hyperbolic structures as in Definition \ref{def:hyperbolic}.

\subsection{Teichm\"uller spaces}
For groups of diffeomorphisms of a compact, oriented manifold, we use a subscript $+$ to indicate 
diffeomorphisms preserving orientation, and subscript $\oz$ to indicate diffeomorphisms isotopic to the identity. 
We recall that the universal cover of $\Diff_+(S^1)=\Diff_\oz(S^1)$ is identified with $\Z$-equivariant diffeomorphisms of $\R$. 

Let $\Sigma$ be compact, connected, and oriented. Denote by  $\bb\Diff(\Sigma)$ the diffeomorphisms preserving the boundary, and by $\zz\Diff(\Sigma)$ the subgroup of diffeomorphisms fixing the boundary pointwise.  We hence have subgroups
\begin{equation}\label{eq:bdif} \bb\Diff(\Sigma)\supset \!\bb\Diff_+(\Sigma)\supset \!\bb\Diff_\oz(\Sigma)\end{equation}
and similarly for $ \zz\Diff(\Sigma)$.
(The superscripts $0,b$ are omitted if $\pS=\emptyset$.) The mapping class group is the quotient 
$\on{MCG}(\Sigma)=\zz\Diff_+(\Sigma)/\zz\Diff_\oz(\Sigma)$.

\begin{definition}
	The (infinite-dimensional) \emph{Teichm\"uller space} is the space of hyperbolic structures on $\Sigma$, up to diffeomorphisms 
	fixing the boundary and homotopic to the identity:
	\begin{equation}\label{eq:teichmueller}
	\on{Teich}(\Sigma)=\on{Hyp}(\Sigma)/\zz\Diff_\oz(\Sigma).\end{equation}
	The  (infinite-dimensional) (Riemann) \emph{moduli space} is the quotient
	\begin{equation}\label{eq:modulispace}
	\M(\Sigma)=\on{Hyp}(\Sigma)/\zz\Diff_+(\Sigma).\end{equation}
\end{definition}

\medskip
Equivalently, $\M(\Sigma)$ is the quotient of $\on{Teich}(\Sigma)$ under the action of the mapping class group.
Both \eqref{eq:teichmueller} and \eqref{eq:modulispace} are endowed with residual actions of boundary diffeomorphisms. Since 
$\bb\Diff_\oz(\Sigma)/\zz\Diff_\oz(\Sigma)=\Diff_\oz(\pS)$, there is an induced action 
\[ \Diff_\oz(\pS)\circlearrowright \M(\Sigma).\]
Similarly, the quotient $\bb\Diff_\oz(\Sigma)/\zz\Diff_\oz(\Sigma)$ acts on $\on{Teich}(\Sigma)$. 
This group is a covering of $\Diff_\oz(\pS)$ (not always the universal cover; see examples below). 
In any case, there is an induced action 
\[ \wt{\Diff}_\oz(\pS)\circlearrowright \on{Teich}(\Sigma)\]
of the universal cover of the identity component $\Diff_\oz(\pS)$. (If $\pS$ has several components, we may also consider diffeomorphisms 
interchanging boundary components.) 
The map \eqref{eq:hypproj} descends to  maps 
\begin{equation}\label{eq:moment1} \on{Teich}(\Sigma)\to \on{Proj}(\pS),\ \ 
\M(\Sigma) \to \on{Proj}(\pS),\end{equation}
which are equivariant for these actions. As we shall explain in this paper, these will be identified as moment maps for Hamiltonian Virasoro  spaces. 


\subsection{Special cases}
One has more concrete descriptions of the Teichm\"uller spaces, as follows. Let $g$ be the genus of $\Sigma$ and 
 $r$ the number of boundary components. The Euler characteristic is thus 
$\chi(\Sigma)=2-2g-r$. 

\subsubsection{Poincar\'{e} disk} ($g=0,\ r=1$.) 
The standard metric on the closed Poincar\'{e} disk $\ol{\DD}$ 
is the unique hyperbolic 0-metric on the disk, up to diffeomorphism. The stabilizer of the standard metric under the action of $\bb\Diff_+(\ol\DD)$ is $G=\on{PSU}(1,1)$. It follows that 
$\on{Hyp}(\ol{\DD})=\bb\Diff_+(\ol{\DD})/G$, and hence
\[ \on{Teich}(\ol{\DD})=\M(\ol{\DD})=\Diff_\oz(\partial\DD)/G\]
(since every diffeomorphism in $\zz\Diff_+(\ol\DD)$ is isotopic to the identity).
This space is closely related to Bers' \emph{universal Teichm\"uller space} \cite{ber:aut,tak:wei}; it has an interpretation as  a coadjoint orbit of the Virasoro group.

\subsubsection{Hyperbolic cylinders}\label{subsubsec:cylinder} ($g=0,\ r=2$.) 
Let $\AA=S^1\times  (-\infty,\infty)$ be the infinite cylinder (open annulus), 
with coordinates $(x,u)$ where $x\in S^1=\R/\Z$. Denote by $\ol{\AA}=S^1\times [-\infty,\infty]$ its 
compactification, with 
boundary defining function $\varrho(x,u)=\cosh(u)^{-1}$. Here $\on{MCG}(\ol{\AA})=\Z$, generated by Dehn twists
$(x,u)\mapsto (x+f(u),u)$ for $f\in C^\infty(\R)$ with $f(u)=0$ for $u<-R$ and $f(u)=1$ for $u>R$, for some $R>0$. 

Given $\ell>0$, the formula 
\begin{equation}\label{eq:doubletrumpet}
\gz= \cosh^2(u) \ell^2 \d x^2 +\d u^2
\end{equation}
defines a hyperbolic 0-metric, with the curve $u=0$  as its  unique closed simple geodesic. 
\begin{center}
	\includegraphics[scale=0.5]{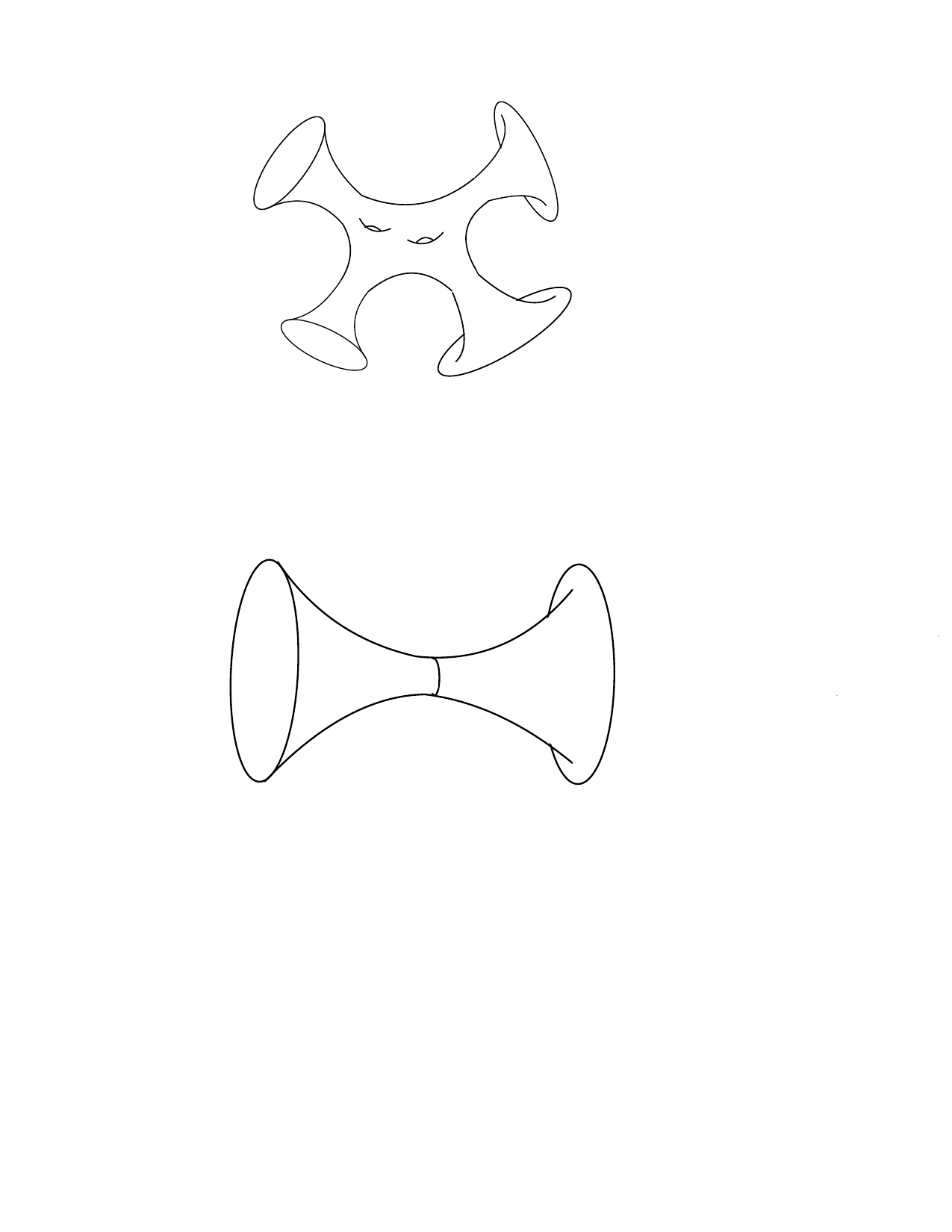}
\end{center}

Conversely, every hyperbolic 0-metric on $\ol{\AA}$ admits a unique 
closed simple geodesic; letting $\ell$ be its length, the metric is obtained from \eqref{eq:doubletrumpet} by the action of an 
element of $\Diff_+(\ol{\AA})$ preserving the two boundary components.
The stabilizer of \eqref{eq:doubletrumpet} under this action is $S^1$, acting by rotations of the cylinder. 
This gives 
\[ \M(\ol{\AA})=
\R_{>0}\times (\Diff_\oz(S^1)\times \Diff_\oz(S^1))/S^1\]
with the anti-diagonal embedding of $S^1$. For the Teichm\"uller space, we note that compactly supported diffeomorphisms of 
$(-\infty,0]\times S^1$, modulo the subgroup fixing the boundary, is the universal cover 
$\wt{\Diff}_\oz(S^1)$. This gives 
\[  \on{Teich}(\ol{\AA})=\R_{>0}\times (  \wt{\Diff}_\oz(S^1)\times \wt{\Diff}_\oz(S^1)    )/\R\]
where  $\R$ is embedded 
anti-diagonally.  
For $\ol{\AA}$ with the 0-metric \eqref{eq:doubletrumpet}, the subset given by $u\ge 0$ is called a \emph{trumpet} (also called \emph{flare} or  \emph{funnel}). 
It has one geodesic boundary component and one boundary at infinity. 

\subsubsection{Surfaces of negative Euler characteristic ($g=0,\ r>2$ or $g\ge 1,\ r\ge 1$)}\label{subsec:negeuler}
After choice of a Fenchel-Nielsen parametrization, one finds 
\begin{equation}\label{eq:FN}
  \on{Teich}(\Sigma)\cong (\R_{>0}\times \R)^{3g-3+r}\times \prod_{j=1}^r (\R_{>0}\times \wt{\Diff}_\oz(S^1)).\end{equation}
We will give details in Section \ref{sec:fn} below. At this point, we just mention that the Fenchel-Nielsen parametrization involves a pants decomposition of the surface. The boundaries of the pants which are not among the boundaries of $\Sigma$ 
form a system of 
$3g-3+r$ circles separating the pants; given a hyperbolic 0-metric these are realized as geodesics, and 
the factors $\R_{>0}\times \R$ in \eqref{eq:FN} are the corresponding length and twist parameters. The boundary components of pants 
which are also boundaries of the surface correspond to trumpets as discussed in \ref{subsubsec:cylinder}; the 
$\R_{>0}\times \wt{\Diff}_\oz(S^1)$-factors signify the length of the geodesic end of the trumpet and a twist by rotating the ideal boundary of the trumpet.


\section{Coframe formalism}

For calculations in this section, we will prefer the half-plane model of hyperbolic geometry. Let
\[ \HH=\{z=x+iy| y>0\},\]
with the standard hyperbolic metric 
\begin{equation}\label{eq:poincaremetric}
\f{1}{y^2}(\d x^2+\d y^2).
\end{equation}
The map $\HH\to \DD,\ z\mapsto (z-i)/(1-iz)$ is an  isometric isomorphism, equivariant with respect to  
\begin{equation}\label{eq:psu11}
 \Ad_q\colon\PSL(2,\R)\to \on{PSU}(1,1),\ \ \ q= \Big[\begin{array}{cc}  1& -i\\ -i& 1\end{array}\Big].\end{equation}
Here the bracket notation $[A]\in \PGL(2,\C)$ denotes the image of an element $A\in \GL(2,\C)$. The isomorphism extends to a bijection 
$\ol{\HH}\cup\{\infty\}\to \ol{\DD}$ taking $0\in \p\HH$ to $-i\in \p\DD$. 
Throughout this section, $G$ denotes the group $\PSL(2,\R)$, and $\g$ its Lie algebra $\mf{sl}(2,\R)$. 

\subsection{Cartan coframe formalism}
A convenient tool for dealing with 0-metrics is the \emph{moving coframe formalism} due to \'{E}. Cartan. Let $\Sigma$ be an oriented surface with boundary. Let  $\zz\Omega^k(\Sigma)=\Gamma(\bigwedge^k\,(\zz T\Sigma)^*)$ denote the de Rham complex
of the Lie algebroid $\zz T\Sigma$. Elements of this space may be seen as ordinary $k$-forms on $\on{int}(\Sigma)$ such that $\varrho^k\alpha$ extends to all of $\Sigma$, for any boundary defining function $\varrho$.

 An \emph{oriented coframe} over an open subset $U\subset \Sigma$ is 
pair of 0-covector fields \[\alpha_1,\alpha_2\in\zz\Omega^1(U)\]
giving an oriented frame for $(\zz T \Sigma)^*|_U$. 	
Given a 0-metric $\mathsf{g}$ on $\Sigma$, the coframe is \emph{orthonormal} if 
\[ \mathsf{g}|_U=(\alpha_1)^2+(\alpha_2)^2.\]
In this case, the Riemannian volume form $\d\vol_{\mathsf{g}} \in \zz\Omega^2(\Sigma)$ is given by 
\[ \d\vol_{\mathsf{g}}|_U =\alpha_1\wedge\alpha_2.\] 
Any two oriented orthonormal coframes for $\gz$ are related,   
on the overlap of their domains, by a \emph{coframe rotation}  
\begin{equation}\label{eq:framerotation}  \al_1'=\cos\varphi  \al_1+\sin\varphi  \al_2,\ \ \al_2'=-\sin\varphi  \al_1+\cos\varphi  \al_2
\end{equation}
with $\varphi\in C^\infty(U\cap U',\R)$. The  \emph{spin connection} for an oriented coframe is the $0$-covector field
\[ \kappa\in \zz\Omega^1(U)\]
defined by the equations 
$\d\alpha_1=-\kappa\wedge \alpha_2,\ \ \d\alpha_2=\kappa\wedge\alpha_1.$
%
Under coframe rotation, the spin connection changes to $\kappa'=\kappa-\d\varphi$. 
This shows that there exists a globally defined function $K_{\mathsf{g}}\in C^\infty(\Sigma)$ such that 
\[ K_{\mathsf{g}}\, \d\vol_{\mathsf{g}}\big|_U=\d\kappa.\]
This function is  the \emph{Gauss curvature} of the metric. 
The three equations 
\begin{equation}\label{eq:structureequations}
\d\alpha_1=-\kappa\wedge\alpha_2,\ \ \d\alpha_2=\kappa\wedge\alpha_1,\ \ \d\kappa=K_{\mathsf{g}}\ \alpha_1\wedge\alpha_2\end{equation}
are  \emph{Cartan's structure equations}.

\begin{examples}\label{ex:example2}
	We list some standard coframes for hyperbolic 0-metrics, and the 
	resulting spin connections. 
	\begin{enumerate}
		\item\label{it:a} Upper half plane $\bar\HH$: 
		\[ \alpha_1=\f{\d x}{y},\ \alpha_2=\f{\d y}{y},\ \kappa=-\f{\d x}{y}.\] 
		\item\label{it:b}  Poincare disk $\ol{\DD}$: 
		\[ \alpha_1=\f{2}{1-r^2} r\d\varphi,\ \ \alpha_2=\f{2}{1-r^2} \d r,\ \ 
		\kappa=\f{1+r^2}{1-r^2}\d\varphi.\]
		Replacing  $r$ with the boundary defining function  
		$y=\f{1-r}{1+r}$, this becomes 
		\[ \alpha_1=\f{1-y^2}{2} \f{\d\varphi}{y},\ \ 
		\alpha_2=\f{\d y}{y},\ \ \kappa=-\f{1+y^2}{2} \f{\d\varphi}{y}\]
		which more clearly exhibits the coframe as 0-covector fields. 
		\item\label{it:c}  Hyperbolic cylinder $\ol{\AA}=S^1\times [\infty,\infty]$ 
		with parameter $\ell>0$: Using coordinates $x,u$,  
		\[ \alpha_1=\cosh(u)\ell\ \d x ,\ \   \alpha_2=-\d u,\ \ 
		\kappa=-\sinh(u)\ell\d x.\]
		Putting $y=e^{-u}$ (which is a boundary defining function near the boundary  $u=\infty$), this becomes
		\[ \alpha_1=\ell\f{1+y^2}{2}\ \f{\d x}{y},\ \  
		\alpha_2=\f{\d y}{y},\ \ 
		\kappa=-\ell\f{1-y^2}{2}\ \f{\d x}{y}.
		\]
		\item\label{it:d}  Fefferman-Graham coframe: For any function $T(x)$, the formulas 
		\[ \alpha_1=\big(1-y^2 T(x)\big) \f{\d x}{y},\ \  \alpha_2=\f{\d y}{y},\ \ 
		\kappa=-\big(1+y^2 T(x)\big) \f{\d x}{y}
		\]
		define a coframe for a hyperbolic 0-metric on $\{(x,y)|\ y\ge 0,\ y^2 T(x)<1\}$. This unifies the boundary behaviour of the previous examples. 
	\end{enumerate}
\end{examples}	
\begin{remark}\label{rem:adaptedcoord}
From now on, the letters $x,y$ will be reserved for local coordinates on $U\subset \Sigma$ that are \emph{oriented} 
(i.e., $\d x\wedge \d y>0$) and \emph{adapted to the boundary}, in the sense that 
$y$ is a boundary defining function for $\partial\Sigma\cap U$. If $U$ is contained in the interior, this just means $y>0$ everywhere on $U$.
\end{remark}

\subsection{The connection 1-form associated with an orthonormal coframe}\label{subsec:flat}
Given a local orthonormal coframe $\alpha_1,\alpha_2\in \zz\Omega^1(U)$ for a $0$-metric $\gz$, with associated spin connection $\kappa$, define a 0-connection 1-form
\begin{equation}\label{eq:A}
A=\hh \left(\begin{array}{cc}  \al_2 & \al_1-\kappa \\   \al_1+\kappa& -\al_2\end{array}\right)
\in \zz\Omega^1(U,\g)
.\end{equation}

The  curvature $F_A=\d A+\hh [A,A]$  is given by 
\begin{equation}\label{eq:curvature} 
F_A= (K_\gz+1)
\left(\begin{array}{cc} 0 & -1\\ 1 & 0
\end{array}
\right)
\
\d\vol_{\mathsf{g}}.\end{equation}
In particular, the 0-metric $\mathsf{g}$ is hyperbolic if and only if the connection 1-form 
$A$ is flat. Let $K\subset G=\PSL(2,\R)$ be the maximal compact  subgroup given as the stabilizer of $i\in \HH$. It is identified with $\SO(2)$ 
by the map 
\begin{equation}\label{eq:identification}
\SO(2)\to K,\ \ R(\varphi)=\left(\begin{array}{cc}  \cos(\varphi) &  -\sin(\varphi)\\  \sin(\varphi)& \cos(\varphi)\end{array}\right)\mapsto [R(\varphi/2)].
\end{equation}
Under coframe rotations  \eqref{eq:framerotation}, the  0-connection 1-form $A$ transforms by 
\begin{equation}\label{eq:aprime} A'=[R\left(\varphi/2\right)]\bullet A\end{equation}
where 
$\bullet$ signifies a gauge transformation 
\[ g\bullet A=
\Ad_g(A)-(\d g) g^{-1}\] for $g\in C^\infty(U,G)$.
We hence see that the $K$-action on connection 1-forms translates into the $\SO(2)$-action on coframes. Observe that $A$ does \emph{not} transform as a connection on the tangent bundle. 

\begin{example}\label{ex:standardex}
	For the upper half plane $\bar{\HH}$ with its standard coframe (Example \ref{ex:example2}\ref{it:a}),
	\[ A=  \left(\begin{array}{cc}  \f{1}{2y}\ \d y &  \f{1}{y}\d x\\ 0 & -\f{1}{2y}\ \d y\end{array}\right).\]
	Note that $A= g^{-1}\bullet 0$ where 
	\[ g=\left(\begin{array}{cc}  1 &  x \\  0 &  1 \end{array}\right) 	\left(\begin{array}{cc}  y^{\f{1}{2}} &  0 \\  0 &  y^{-\f{1}{2}} \end{array}\right).
	\]
\end{example}

\begin{example}[Fefferman-Graham gauge] The 0-connection form defined by the Fefferman-Graham coframe 
(Example \ref{ex:example2}\ref{it:d}) reads as 
\[ A= \left(\begin{array}{cc}  \f{1}{2y}\ \d y &  \f{1}{y}   \d x\\  -T(x)y\d x& -\f{1}{2y}\ \d y\end{array}\right).\]
\end{example}

\subsection{Adapted coframes}\label{subsec:adapted}
For any manifold $M$ with boundary $\p M$, the restriction of the 0-cotangent bundle  to the boundary has a distinguished trivial subbundle 
\[ \p M\times \R\subset   (\zz TM)^*|_{\p M},\]
spanned by the restriction of $\varrho^{-1}\d\varrho\in \zz\Omega^1(M)$, for any boundary defining function $\varrho$. (Changing $\varrho$ by a positive function changes this expression by an \emph{ordinary} (exact) 1-form, but the restriction of a regular 1-form as a section of the 0-tangent bundle vanishes.)

\begin{definition}\label{def:adapted}
	An oriented coframe $\alpha_1,\alpha_2\in \zz\Omega^1(U)$, defined on a neighborhood $U\subset \Sigma$ of the boundary, is \emph{adapted to the boundary} if 	$\alpha_2|_{\pS}$  is the canonical 
	section of $(\zz T\Sigma)^*|_{\pS}$. 
		That is, 
		\[ \alpha_2=\f{d \varrho}{\varrho}+O(\varrho^0).\]
\end{definition}
Here we  write $\alpha=O(\varrho^k)$ if $\varrho^{-k}\alpha$ extends smoothly to the boundary (as an ordinary differential form). 
Note that for an adapted coframe, $\varrho\alpha_1$ pulls back to a volume form on $\p\Sigma$.

The coframes in parts \eqref{it:a},\ \eqref{it:b},\ \eqref{it:d} of Examples \ref{ex:example2} are adapted to the boundary. 
(But \eqref{it:b} is not defined at the center of $\ol{\DD}$.) 
The coframe in \eqref{it:c} for the double trumpet is adapted to the boundary at $u=\infty$ but not at $u=-\infty$. One can turn it into an adapted coframe by applying a coframe rotation $R(\phi$ with $\phi|_{u=-\infty}=\pi,\ \phi|_{u=0}=0$. 
If $\Sigma$ is compact and connected, of non-zero Euler characteristic, 
it is impossible, by Poincar\'{e}'s theorem on zeroes of vector fields on surfaces, to find a \emph{global} oriented  coframe that is adapted to all the boundary components. On the other hand, we have: 

\begin{lemma}\label{lem:adapted}
	Every hyperbolic 0-metric $\mathsf{g}$ admits an oriented orthonormal coframe $\alpha_1,\alpha_2$ on some collar neighborhood of the boundary, which is adapted to the boundary. The spin connection of such a coframe satisfies 
	\[ \kappa=-\alpha_1+O(\varrho^1).\]
\end{lemma}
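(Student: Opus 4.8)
The strategy is to build the adapted coframe by starting with \emph{any} orthonormal coframe near the boundary and correcting it by a coframe rotation, then to compute the spin connection using the structure equations together with the hyperbolicity condition $K_{\mathsf g}=-1$. First I would fix a boundary defining function $\varrho$ and a collar neighborhood $\pS\times[0,\epsilon)$, and pick an arbitrary oriented orthonormal coframe $\beta_1,\beta_2\in\zz\Omega^1(U)$ for $\mathsf g$ on this collar. Since $\alpha_2':=\varrho^{-1}\d\varrho$ is, as a section of $(\zz T\Sigma)^*|_{\pS}$, a unit covector field for $\mathsf g$ (because $\mathsf g$ is a $0$-metric restricting to the flat metric $\varrho^{-2}(\d\varrho^2+\cdots)$ on the $0$-tangent bundle — this is precisely the defining property recalled in Section~2.2), the two unit covectors $\beta_2|_{\pS}$ and $\alpha_2'|_{\pS}$ differ by a rotation. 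So there is a function $\varphi_0$ on $\pS$ with $(-\sin\varphi_0)\beta_1+(\cos\varphi_0)\beta_2=\varrho^{-1}\d\varrho$ along the boundary; extend $\varphi_0$ arbitrarily into the collar and apply the coframe rotation \eqref{eq:framerotation} with this $\varphi$. The resulting coframe $\alpha_1,\alpha_2$ then satisfies $\alpha_2=\varrho^{-1}\d\varrho+O(\varrho^0)$, i.e.\ it is adapted in the sense of Definition~\ref{def:adapted}. This establishes existence.

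\textbf{The spin connection computation.} For the second claim, write $\kappa=f\,\alpha_1+h\,\alpha_2$ for functions $f,h$ (so $f,h$ are $O(\varrho^0)$ in the sense that $\varrho\alpha_i$ are regular). The goal is to show $f=-1+O(\varrho^1)$ and $h=O(\varrho^1)$ — equivalently $\kappa+\alpha_1=O(\varrho^1)$ after noting $\alpha_i$ contribute the $\varrho^{-1}$ factor. I would extract this from the structure equations \eqref{eq:structureequations}. The key input is that in adapted oriented coordinates $x,y$ with $y=\varrho$, an adapted coframe looks like $\alpha_2=\frac{\d y}{y}+O(y^0)$ and $\alpha_1=\frac{b(x)}{y}\d x+O(y^0)$ for some positive function $b$ (here $\varrho\alpha_1$ pulling back to a volume form on $\pS$, as noted after Definition~\ref{def:adapted}, forces the leading coefficient of $\d x$ to be $O(y^{-1})$ and nonvanishing; the $\d y$-component of $\alpha_1$ is $O(y^0)$). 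Computing $\d\alpha_2$ shows its leading term is $O(y^0)$, and from $\d\alpha_2=\kappa\wedge\alpha_1$ together with $\alpha_1\wedge\alpha_1=0$ one reads off that the $\alpha_1$-component of $\kappa$ cannot blow up faster than allowed; then $\d\alpha_1=-\kappa\wedge\alpha_2$ pins down the leading coefficient. Matching the most singular terms — using that $\d(\frac{b}{y}\d x)=\frac{b}{y^2}\d x\wedge \d y+O(y^{-1})$ while $-\kappa\wedge\alpha_2=-\big(f\alpha_1+h\alpha_2\big)\wedge\frac{\d y}{y}+\cdots = -f\,\frac{b}{y^2}\,\d x\wedge\d y+O(y^{-1})\cdot(\cdots)$ — gives $f=-1+O(y)$. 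The curvature equation $\d\kappa=-\,\alpha_1\wedge\alpha_2$ (the hyperbolic case $K_{\mathsf g}=-1$) then constrains the subleading behavior and forces $h=O(y)$; alternatively one can compare directly with the Fefferman–Graham coframe of Example~\ref{ex:example2}\eqref{it:d}, which realizes exactly $\kappa=-(1+y^2T)\frac{\d x}{y}=-\alpha_1+O(y^1)$, and argue that any two adapted orthonormal coframes for the same hyperbolic $0$-metric differ by a coframe rotation $R(\varphi)$ with $\varphi=O(y^1)$ (since both have $\alpha_2$ agreeing to leading order), so the $O(y^1)$ error in $\kappa$ is rotation-independent.

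\textbf{Main obstacle.} The delicate point is controlling the \emph{order} of the correction: showing that the rotation relating $\beta$ to an adapted coframe, or relating two adapted coframes, is not merely $O(y^0)$ but differs from a constant by $O(y^1)$, so that $\kappa'=\kappa-\d\varphi$ changes only at order $O(y^1)$ rather than $O(y^0)$. This requires a careful bookkeeping of which coefficients in the local expansion $\alpha_i = (\text{leading}/y)\,\d x_i + (\text{regular})$ are constrained by the metric being a genuine $0$-metric versus free, and using that $\alpha_2 - \frac{\d y}{y}$ being $O(y^0)$ as a \emph{section of the $0$-cotangent bundle} actually means its regular part has no $\d y/y$ term — i.e.\ the ambiguity in $\varrho$ and in the coframe rotation interact precisely so that the leading $\frac{1}{y}$-data of $\alpha_1$ (hence of $\kappa$) is rigid. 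I expect that the cleanest route is in fact to reduce to the Fefferman–Graham normal form: once one knows (from Section~3's local normal form results, or by a direct ODE argument integrating the structure equations with $K=-1$) that every hyperbolic $0$-metric admits a coframe of Fefferman–Graham type near $\pS$, the lemma is immediate, with the adapted coframe unique up to an $O(y^1)$ rotation and the stated formula for $\kappa$ read off directly.
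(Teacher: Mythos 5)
Your overall strategy (rotate an arbitrary orthonormal coframe near the boundary, then exploit the structure equations with $K_{\gz}=-1$) is the same as the paper's, but there are two genuine gaps. First, your existence step rests on the claim that $\varrho^{-1}\d\varrho|_{\pS}$ is automatically a \emph{unit} $0$-covector for any $0$-metric, "by the defining property in Section~2.2". That is false: the definition only requires $\varrho^2\gz$ to extend to an ordinary metric, and for example $\gz=y^{-2}(\d x^2+4\,\d y^2)$ is a perfectly good $0$-metric for which $|\,\d y/y\,|_{\gz}=\tfrac12$ at the boundary (it has constant curvature $-\tfrac14$, so it is not hyperbolic). The unit-length statement is exactly the nontrivial content that must be extracted from hyperbolicity: in the paper one only rotates so that $\alpha_2|_{\pS}$ is a \emph{positive multiple} $h$ of the canonical section, and then the $y^{-2}$-coefficients of the structure equations with $K_\gz=-1$ force $h^2=1$, hence $h=1$. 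Your argument silently assumes the conclusion of that step.

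Second, your asymptotic analysis of $\kappa$ falls one order short of the lemma. Writing $\kappa=f\alpha_1+h\alpha_2$, you obtain $f=-1+O(y)$ and $h=O(y)$; but since $\alpha_1,\alpha_2$ are themselves only $O(\varrho^{-1})$ as ordinary forms, this yields $\kappa+\alpha_1=(1+f)\alpha_1+h\alpha_2=O(\varrho^0)$, not $O(\varrho^1)$. The assertion of the lemma is equivalent to $1+f=O(y^2)$ and $h=O(y^2)$, which requires going one order deeper: one must also kill the $y^{-1}$-coefficient of $\alpha_1+\kappa$, which the paper does by comparing the $y^{-1}$-terms of $\d(\alpha_1+\kappa)=-(\alpha_1+\kappa)\wedge\alpha_2$ and of $\d\alpha_2=\kappa\wedge\alpha_1$ (giving $(\gamma_1+\gamma_3)\wedge\d y=0$ and $(\gamma_1+\gamma_3)\wedge\beta=0$, hence $\gamma_1+\gamma_3=0$). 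The same order problem afflicts your alternative route: if two adapted coframes differ by a rotation with $\varphi=O(y^1)$, then $\kappa'+\alpha_1'$ differs from $\kappa+\alpha_1$ by $-\d\varphi+\sin\varphi\,\alpha_2+(\cos\varphi-1)\alpha_1=O(y^0)$, so the property $\kappa=-\alpha_1+O(\varrho^1)$ is \emph{not} preserved unless $\varphi=O(y^2)$. Finally, reducing to the Fefferman--Graham normal form is circular in the logic of the paper, since the local normal form (Theorem \ref{th:normalform}, via Lemma \ref{lem:2}) is itself deduced from Lemma \ref{lem:adapted}; if you want that route you must supply an independent proof of the normal form.
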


\begin{proof}
Choose an oriented orthonormal coframe $\alpha_1,\alpha_2\in \zz\Omega(U)$ on a collar neighborhood $U$ of the boundary, 
with the property that 
$\alpha_2|_{\p\Sigma}$ is a positive function times the canonical section of $(\zz T\Sigma)^*|_{\pS}$. 
Using that $\gz$ is hyperbolic, we shall show that $\alpha_2|_{\p\Sigma}$ must then be \emph{equal to} the canonical section. 
It suffices to prove this in  local coordinates $x,y$ adapted to the boundary (cf.~ Remark \ref{rem:adaptedcoord}). 
	Consider the Laurent expansions
	in powers of $y$, 
	\[ \alpha_1=\f{1}{y}\beta_1+\gamma_1+O(y^1),\ \ \alpha_2=\f{h}{y}\d y+\gamma_2+O(y^1),\ \  \kappa=\f{1}{y}\beta_3+\gamma_3+O(y^1),\]	
	where  $\beta_i,\gamma_i$,  are of the form $f_i(x)\d x+g_i(x)\ \d y$, and where $h$ is a function of $x$, with 
	$h(x)>0$. 
	Comparing coefficients of $y^{-2}$ in the structure equations \eqref{eq:structureequations} (with $K_{\gz}=-1$) gives conditions 
	\begin{equation}\label{eq:-2}
	\beta_1\wedge \d y=-h\beta_3\wedge \d y,\ \ 0=\beta_3\wedge\beta_1,\ \ \beta_3\wedge\d y=-h\beta_1\wedge \d y.
	\end{equation}
	From 
	\[ 	\d\vol_\gz=\alpha_1\wedge\alpha_2=\f{1}{y^2}h \beta_1\wedge \d y+O(y^{-1})\] 
	we see that $\beta_1,\d y$ are pointwise linearly independent; 
	in particular $\beta_1$ is non-vanishing. Hence, the second equation in \eqref{eq:-2} shows that 
	$\beta_3$ is a scalar multiple of $\beta_1$, and so the other two equations give $\beta_1=-h\beta_3,\ \beta_3=-h\beta_1$. 
	Hence $h=1$ and $\beta_3=-\beta_1$. 
	At this stage, the expressions for the coframe have simplified to 
	\[ \alpha_1=\f{1}{y}\beta+\gamma_1+O(y^1),\ \ \alpha_2=\f{1}{y}\d y+\gamma_2+O(y^1),\ \  \kappa=-\f{1}{y}\beta+\gamma_3+O(y^1)\]	
	(where we write $\beta=\beta_1$). From the sum of the first and third structure equations, 
	$\d(\alpha_1+\kappa)=-(\alpha_1+\kappa)\wedge\alpha_2$ we obtain, by comparing coefficients of $y^{-1}$, that 
	$(\gamma_1+\gamma_3)\wedge\d y=0$. On the other hand, $\d\alpha_2=\kappa\wedge\alpha_1$ gives 
	$(\gamma_1+\gamma_3)\wedge\beta=0$. Using again that $\beta,\d y$ are pointwise  linearly independent, we conclude $\gamma_1+\gamma_3=0$, hence $\alpha_1+\kappa=O(y^1)$. 
\end{proof}

Locally, one can achieve an even better normal form for the coframe. 

\begin{lemma}\label{lem:2}
	Let $\gz$ be a hyperbolic $0$-metric on $\Sigma$. For every $m\in \Sigma$ there exists an adapted oriented orthonormal coframe $\alpha_1,\alpha_2$  on some open neighborhood of $m$ 
	such that the associated spin connection is 
	\[ \kappa=-\alpha_1.\]
	Equivalently, the connection 1-form \eqref{eq:A}  is upper triangular. 
\end{lemma}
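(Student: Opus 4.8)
The plan is to reduce the statement to solving a single total differential equation for the coframe rotation angle, and then to dispose of the boundary by a rescaling trick. Starting from any orthonormal oriented coframe $\alpha_1,\alpha_2$ near $m$, with spin connection $\kappa$ — and, in case $m\in\pS$, taken to be adapted to the boundary, as permitted by Lemma~\ref{lem:adapted} — the coframe rotation \eqref{eq:framerotation} by an angle $\varphi$ produces $\alpha_1'=\cos\varphi\,\alpha_1+\sin\varphi\,\alpha_2$ and $\kappa'=\kappa-\d\varphi$. By \eqref{eq:A} the rotated connection $1$-form $A'$ is upper triangular exactly when $\alpha_1'+\kappa'=0$, i.e.
\[ \d\varphi \;=\; \kappa + \cos\varphi\,\alpha_1 + \sin\varphi\,\alpha_2 \;=:\; \omega. \]
Differentiating $\omega$ along $\Sigma$ and inserting Cartan's structure equations \eqref{eq:structureequations}, a short computation shows that the Frobenius integrability condition for this equation holds precisely because $K_\gz=-1$ — the obstruction being $(K_\gz+1)\,\alpha_1\wedge\alpha_2$, exactly as in \eqref{eq:curvature}. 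Hence, when $m$ is an interior point, the classical existence theorem for completely integrable total differential equations gives a local solution $\varphi$ (with $\varphi(m)$ prescribed arbitrarily), and the lemma follows, the adaptedness requirement being empty there.

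The case requiring care is $m\in\pS$, where $\omega$ is singular along the boundary: in oriented coordinates $x,y$ adapted to the boundary (Remark~\ref{rem:adaptedcoord}), $\alpha_1$ and $\kappa$ blow up like $y^{-1}$. I would exploit the extra structure of an adapted coframe recorded in Lemma~\ref{lem:adapted}, namely $\alpha_1,\kappa\in\zz\Omega^1$, $\alpha_2=\varrho^{-1}\d\varrho+O(\varrho^0)$, and $\kappa+\alpha_1=O(\varrho)$, and look for $\varphi$ of the form $\varphi=y\psi$ with $\psi$ smooth up to $y=0$; such a $\varphi$ is then automatically $O(\varrho)$ and vanishes on $\pS$. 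After the substitution $\varphi=y\psi$, the $y^{-1}$ singularity of $\alpha_1$ cancels that of $\kappa$ (since $\kappa+\alpha_1=O(\varrho)$), the remaining term $(\cos(y\psi)-1)\,\alpha_1$ being regular because $\cos(y\psi)-1=O(y^2)$, while $\sin(y\psi)\,\varrho^{-1}\d\varrho$ is regular and absorbs the $\psi\,\d y$ coming from $\d(y\psi)$; the upshot is that $\d\varphi=\omega$ turns into a \emph{regular} total differential equation $\d\psi=\widetilde\omega(x,y,\psi)$, with $\widetilde\omega$ smooth up to and including $y=0$ and still completely integrable (the integrability identity persists by continuity from $y>0$). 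Frobenius then produces a local solution $\psi$ near $m$, and $\varphi:=y\psi$ is the desired rotation angle.

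Finally one assembles the conclusion. The rotated coframe $\alpha_1',\alpha_2'$ is again orthonormal and oriented, and $\kappa'=-\alpha_1'$ by construction, equivalently $A'$ is upper triangular. In the boundary case the new coframe is moreover still adapted: $\alpha_2'-\alpha_2=-\sin\varphi\,\alpha_1+(\cos\varphi-1)\,\alpha_2$, a sum of products of a function that is $O(\varrho)$ (namely $\sin\varphi$ or $\cos\varphi-1$, since $\varphi=O(\varrho)$) with a $0$-covector field, hence an ordinary smooth $1$-form; the restriction of an ordinary $1$-form to $\pS$ as a section of $(\zz T\Sigma)^*$ vanishes, so $\alpha_2'|_{\pS}=\alpha_2|_{\pS}$ is still the canonical section. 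The main obstacle is the boundary analysis of the second paragraph: one has to verify with care that the rescaling $\varphi=y\psi$ genuinely turns the singular equation $\d\varphi=\omega$ into one that is smooth and completely integrable up to $y=0$, and it is exactly here that the three structural properties of an adapted coframe from Lemma~\ref{lem:adapted} are needed.
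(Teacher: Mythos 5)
Your proof is correct, but it takes a genuinely different route from the paper's. The paper writes the connection $1$-form of an adapted coframe as $A=g\bullet 0$ for a parallel transport $g$ and extracts the required rotation from the Iwasawa decomposition of $g$; at the boundary it first strips off the singular gauge factor $\on{diag}(y^{-1/2},y^{1/2})$ so that the remaining parallel transport is regular, and then verifies via the explicit formula $\psi=-\arctan(yc'/a')$ that the Iwasawa angle extends smoothly to $y=0$. You instead stay entirely inside the coframe formalism: the condition $\alpha_1'+\kappa'=0$ becomes the Pfaffian equation $\d\varphi=\kappa+\cos\varphi\,\alpha_1+\sin\varphi\,\alpha_2$, whose Frobenius obstruction is indeed $(K_{\gz}+1)\,\alpha_1\wedge\alpha_2$ (I checked the computation), making the role of $K_{\gz}=-1$ transparent and parallel to \eqref{eq:curvature}. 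Your substitution $\varphi=y\psi$ is the exact analogue of the paper's singular gauge transformation: the inputs $\kappa+\alpha_1=O(\varrho)$, $y\alpha_1$ regular, and $\alpha_2-\varrho^{-1}\d\varrho$ regular from Lemma \ref{lem:adapted} do render $\widetilde\omega=y^{-1}(\omega-\psi\,\d y)$ smooth in $(x,y,\psi)$ up to $y=0$, and since $\d\psi-\widetilde\omega=y^{-1}(\d\varphi-\omega)$ for $y>0$, integrability extends to $y=0$ by continuity. The one point left implicit is that Frobenius is applied on a half-space; this is harmless, as the usual proof by successive integration of the ODEs in $x$ and $y$ works verbatim on $y\ge 0$ once the compatibility identity holds there. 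The paper's argument buys an explicit closed-form angle consistent with the gauge-theoretic language used later; yours buys a self-contained, more elementary argument that never leaves the structure equations.
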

\begin{proof}
	Begin by choosing  any adapted oriented orthonormal coframe $\alpha_1,\alpha_2$ for $\gz$, and let $A$ be the associated  connection 1-form. 
	
	Consider the case that $m$ is an interior point. Let $U\subset \Sigma$ be a simply connected open neighborhood of $m$. 
	Since $A$ is flat, it determines a parallel transport $g\in C^\infty(U,\SL(2,\R))$, i.e. $A=g\bullet 0=-(\d g)g^{-1}$ with initial condition $g(m)=e$. The Iwasawa decomposition for $\SL(2,\R)$ gives a map $R(\psi)\colon U\to \SO(2)$ such that 
	$R(\psi) g$ is upper triangular. It hence follows that $ R(\psi)\bullet A=(R(\psi)g)\bullet 0$ is upper triangular. 
	
	The  case that $m$ is a boundary point requires more care, since we need a coframe rotation that extends all the way to the boundary. 
	 Pick adapted local coordinates $x,y$ on a simply connected open neighborhood $U\subset \Sigma$ of $m$, and write $\alpha_1=\f{1}{y} \beta+O(y^0)$ as in the proof of the previous lemma. Then
	\[ A=  \left(\begin{array}{cc}   \f{1}{2y}\ \d y+O(y^0) &    \f{1}{y}\beta+O(y^0)\\  O(y^1) & -\f{1}{2y}\ \d y+O(y^0)\end{array}\right)
	=\left(\begin{array}{cc}  y^{-1/2}& 0\\  0 & y^{1/2}\end{array}\right)\bullet A'
	,\]
	where $A'\in \Omega^1(U,\g)$ is a  \emph{regular} flat connection on $U$ -- all matrix entries extend smoothly to the boundary. 
	This determines a parallel transport $g'\colon U\to \SL(2,\R)$  i.e.  
	$A'=g'\bullet 0=-(\d g')g'^{-1}$ with initial condition $g'|_m=e$.  Away from the boundary, we obtain $A=g\bullet 0$ with 
	\begin{equation}\label{eq:paralleltransport}
	g=	\left(\begin{array}{cc}  y^{-1/2}& 0\\  0 & y^{1/2}\end{array}\right) g'\colon \ 
	U-\partial\Sigma\to \SL(2,\R).
	\end{equation}
	By the Iwasawa decomposition for $\SL(2,\R)$
	there is a unique map $R(\psi)\colon U-\partial\Sigma\to \SO(2)$ with the property that $R(\psi) g$ is upper triangular with positive diagonal. With this choice, $R(\psi)\bullet A$ is upper triangular on $U-\partial\Sigma$. To show that $\psi$ extends smoothly to the boundary, write 
	\begin{equation}\label{eq:gequation}
	g=\left(\begin{array}{cc}  a&  b\\  c & d\end{array}\right),\ \ g'=
	\left(\begin{array}{cc}  a'&  b'\\  c' & d'\end{array}\right). 
	\end{equation}
Then
	\[ \psi=-\arctan(c/a)=-\arctan(y c'/a').\]
	Since $a'|_m=1$, this extends smoothly to all of $U$. 
\end{proof}

\begin{remark}
	The parallel transport \eqref{eq:paralleltransport}, as a map into $G=\PSL(2,\R)$, is well-defined only away from the boundary. 
	It becomes well-defined up to the boundary if it is regarded as a map to the `wonderful' compactification $\ol{G}$. 
\end{remark}

\subsection{Local normal form} 
Lemma \ref{lem:2} allows us to give a quick proof of the local normal form for hyperbolic 0-metrics. Earlier proofs proceed 
through uniformization and the classification of ends \cite[Theorem 2.3]{bor:spec}, or through estimates for sectional curvatures
\cite{gui:pol}. (We thank D. Borthwick for these references.) 

\begin{theorem}\label{th:normalform}
	Let $\mathsf{g}$ be a hyperbolic $0$-metric on $\Sigma$. Every $m\in \Sigma$ admits an open neighborhood  $U$ and a $0$-isometry $U\to \ol{\HH}$. 
\end{theorem}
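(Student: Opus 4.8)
The plan is to read off the normal form directly from the upper-triangular coframe of Lemma~\ref{lem:2}, by integrating Cartan's structure equations. Fix $m\in\Sigma$ and use Lemma~\ref{lem:2} to choose an adapted oriented orthonormal coframe $\alpha_1,\alpha_2$ on a simply connected neighbourhood $U$ of $m$ with spin connection $\kappa=-\alpha_1$, so that the connection $1$-form \eqref{eq:A} is upper triangular, with diagonal entries $\pm\tf12\alpha_2$ and upper-right entry $\alpha_1$. Feeding $\kappa=-\alpha_1$ and $K_{\mathsf g}=-1$ into the structure equations \eqref{eq:structureequations}---equivalently, imposing flatness $F_A=0$ via \eqref{eq:curvature}---leaves exactly the two identities $\d\alpha_2=0$ and $\d\alpha_1=\alpha_1\wedge\alpha_2$ on $U$.

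Next I would integrate these on the simply connected interior. Since $\alpha_2$ is closed there, $\alpha_2=\d v$ for some $v\in C^{\infty}(\on{int}(U))$; then $\d(e^{v}\alpha_1)=e^{v}(\d v\wedge\alpha_1+\alpha_1\wedge\d v)=0$, so $e^{v}\alpha_1=\d u$ for some $u\in C^{\infty}(\on{int}(U))$. Putting $y=e^{v}>0$ and $x=u$ turns the coframe into $\alpha_1=y^{-1}\d x$, $\alpha_2=y^{-1}\d y$, so that $\mathsf g|_{\on{int}(U)}=\alpha_1^{2}+\alpha_2^{2}=y^{-2}(\d x^{2}+\d y^{2})$ is the standard hyperbolic form \eqref{eq:poincaremetric}. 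The only remaining point is that the functions $(x,y)$ assemble into a chart near $m$, compatible with the manifold-with-boundary structure and with image in $\ol\HH$; that chart is then the asserted $0$-isometry.

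I expect this boundary step to be the only real obstacle, and it is precisely where \emph{adaptedness} of the coframe (Lemma~\ref{lem:2}) enters. For $m\in\on{int}(\Sigma)$ there is nothing to do: $\d x\wedge\d y=y^{2}\,\alpha_1\wedge\alpha_2\neq0$ because $(\alpha_1,\alpha_2)$ is a coframe, and $y>0$. For $m\in\p\Sigma$, fix a boundary defining function $\varrho$; Definition~\ref{def:adapted} gives $\alpha_2=\varrho^{-1}\d\varrho+O(\varrho^{0})$, hence $\d(v-\log\varrho)$ extends smoothly across $\p\Sigma$, so $v=\log\varrho+f$ with $f\in C^{\infty}(U)$ and $y=e^{v}=\varrho\,e^{f}$ is again a boundary defining function. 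Likewise $\d x=e^{v}\alpha_1=e^{f}\,(\varrho\alpha_1)$, and since $\varrho\alpha_1$ extends smoothly and restricts to a volume form on $\p\Sigma$ (the remark after Definition~\ref{def:adapted}), $\d x$ extends smoothly with nowhere-vanishing restriction to $\p\Sigma$; as $\d y$ is transverse to $\p\Sigma$, the forms $\d x,\d y$ stay linearly independent up to the boundary. Since moreover $(x,y)$ maps $\p\Sigma$ into $\{y=0\}$ and $\on{int}(\Sigma)$ into $\{y>0\}$, after shrinking $U$ it is a diffeomorphism onto an open subset of $\ol\HH=\{y\ge0\}$ pulling the standard $0$-metric back to $\mathsf g$. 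Everything except this boundary bookkeeping is purely formal, which is what makes the proof short.

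The same argument can be phrased invariantly: the parallel transport $g$ of $A$ produced in the proof of Lemma~\ref{lem:2}---upper triangular with positive diagonal, with the controlled behaviour $g=\diag(y^{-1/2},y^{1/2})\,g'$, $g'$ regular and $g'(m)=e$, near $\p\Sigma$---defines the chart $\phi\colon U\to\ol\HH$, $\phi(p)=g(p)^{-1}\cdot i$. Writing $g_0$ for the upper-triangular map of Example~\ref{ex:standardex} (so that $A_\HH=g_0^{-1}\bullet 0$ and $g_0(z)\cdot i=z$), one gets $g_0\circ\phi=g^{-1}$, hence $\phi^{*}A_\HH=A$ and therefore $\phi^{*}\mathsf g_\HH=\mathsf g$; the factor $\diag(y^{-1/2},y^{1/2})$ is exactly what makes $\phi$ extend smoothly to $\p\Sigma$, landing in $\p\HH$ there.
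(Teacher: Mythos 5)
Your proof is correct and follows essentially the same route as the paper's: invoke Lemma~\ref{lem:2} to get an adapted coframe with $\kappa=-\alpha_1$, deduce $\d\alpha_2=0$ and $\d(y\alpha_1)=0$ from the structure equations, and integrate to produce the coordinates $x,y$ with $\alpha_1=y^{-1}\d x$, $\alpha_2=y^{-1}\d y$, using adaptedness to see that $y=e^f\varrho$ is a boundary defining function and that the chart extends to $\p\Sigma$. The extra invariant reformulation via the parallel transport of Lemma~\ref{lem:2} is a nice supplement but not a different argument.
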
	

\begin{proof}
	Consider the case that $m$ is a boundary point. (For interior points the argument is similar.) 	By Lemma \ref{lem:2}, we may choose 
	an adapted 
	oriented orthonormal coframe $\alpha_1,\alpha_2$ with $\kappa=-\alpha_1$. The second structure equation \eqref{eq:structureequations} gives
	$\d\alpha_2=-\kappa\wedge\alpha_1=0$. Since the coframe is adapted, the difference 
	$\alpha_2-\f{\d \varrho}{\varrho}$ extends smoothly to the boundary, and hence may be written as 
	$\d f$ near $m$. Hence, taking $y=e^f \varrho$ as a coordinate near $m$, we obtain 
	$\alpha_2=y^{-1}\d y$. 
	The first structure equation shows that $y\alpha_1$ is closed: $\d(y\alpha_1)=\d y\wedge\alpha_1-y\kappa\wedge\alpha_2=0$. We may therefore choose the coordinate $x$ near $m$ so that 
	$y\alpha_1=\d x$, which gives $\alpha_1=y^{-1}\ \d x$. 
	This proves the existence of an isometric diffeomorphism from an open neighborhood $U$ of $m\in \Sigma$ onto an open subset of $\bar{\HH}$.
\end{proof}


\section{Hyperbolic structures from flat connections}
The symplectic structure on the infinite-dimensional Teichm\"uller space $\on{Teich}(\Sigma)$ will be obtained 
by a reduction procedure, starting from the usual Atiyah-Bott symplectic structure on a space of connections.  We encountered 
flat connection 1-forms in the coframe formalism, see \eqref{eq:A}. 
Note however that these 1-forms become singular at the boundary. While it is possible to work with these singular connections, we will pursue a different approach where \eqref{eq:A} represents an \emph{ordinary} connection 1-form $\theta$ on a principal bundle. This is motivated by the theory of geometric structures (cf.~\cite{gol:geo0,gol:geo,thu:3dim}).

Throughout, we take $G=\PSL(2,\R)$, with the action on $\ol{\DD}$ regarded as 
$\ol{\DD}=\ol{\HH}\cup\{\infty\}$, or equivalently via 
$\PSL(2,\R)\cong \on{PSU}(1,1)$ (cf. \eqref{eq:psu11}).  

\subsection{Flat bundles from hyperbolic structures}
In Definition \ref{def:hyperbolic}, hyperbolic structures on surfaces $\Sigma$ with boundary were described in terms of charts  $\phi_\alpha \colon U_\alpha\to \ol{\DD}$ with transition functions $h_{\alpha\beta}\in G$, 
i.e. $\phi_\alpha(x)=h_{\alpha\beta}. \phi_\beta(x)$ on $U_\alpha\cap U_\beta$.  The transition functions define a principal $G$-bundle 
\[ \pi\colon P\to \Sigma,\] 
obtained from $\bigsqcup_\alpha (U_\alpha\times G)$ by identifying $(x,g)\in U_\beta\times G$ with 
$(x,h_{\alpha\beta}\,g)\in U_\alpha\times G$. 
The charts themselves determine a $G$-equivariant morphism of manifolds with boundary\footnote{A \emph{morphism of manifolds with boundary} $F\colon M_1\to M_2$ is a smooth map with the property that the pullback of a boundary defining function on $M_2$ is a boundary defining function on $M_1$. Note that such a map determines a morphism of the 0-tangent bundles.}
\[ \sigma\colon P\to \ol{\DD}\] 
given in the local trivializations by $U_\alpha\times G\to \ol{\DD},\ (x,g)\mapsto g^{-1}. \phi_\alpha(x)$. We refer to $\sigma$ as a
\emph{developing section}, since it may be regarded as a section of the associated bundle with fiber $\ol{\DD}$.  

The principal bundle $P$ comes equipped with a flat connection $\theta\in \A_{\on{flat}}(P)$, given in the defining local trivializations by $A_\alpha=0$.  It has the following special property: Let $\on{At}(P)=TP/G$ be the Atiyah algebroid (see Appendix \ref{subsec:atiyah}), and 
denote by $j^\theta\colon T\Sigma\to \At(P)$ the horizontal lift defined by $\theta$. 
Then the composition 
\begin{equation}\label{eq:positivitycondition} T\Sigma\stackrel{j^\theta}{\lra} \At(P)\stackrel{T\sigma}{\lra} V_\sigma=(\sigma^*T\ol{\DD})/G\end{equation}
is an \emph{orientation preserving bundle isomorphism}. In terms of the local trivialization $P|_{U_\alpha}=U_\alpha\times G$, we have  $V_\sigma|_{U_\alpha}=\phi_\alpha^*T\ol{\DD}$, and 
\eqref{eq:positivitycondition} 
 is just the tangent map $T\phi_\alpha\colon TU_\alpha\to V_\sigma|_{U_\alpha}=\phi_\alpha^*T\ol{\DD}$.

\begin{remark}
If the surface is the closed Poincar\'{e} disk itself, these constructions become tautological: The principal bundle is the trivial bundle 
$\ol{\DD}\times G$ with the trivial connection $\pr_2^*\theta^L$ (where $\theta^L$ is the left-invariant Maurer-Cartan form), 	
and $\sigma(z,g)=g^{-1}. z$. All of these data are equivariant for the $G$-action on $\ol{\DD}$. 
More generally, if $\Sigma=(\ol{\DD}-\mathfrak{L})/\Gamma$ as in Remark \ref{rem:schottky},  the triple $(P,\sigma,\theta)$ for $\Sigma$ is obtained from the corresponding triple for $\ol{\DD}-\mathfrak{L}$, by taking the quotient under $\Gamma$. 
\end{remark}

Similarly, any projective structure on an oriented 1-manifold $\CC$ (Definition \ref{def:projective}) determines a principal $G$-bundle $Q\to \CC$ with a developing section $\tau\colon Q\to \p\DD$ and a connection $\vartheta\in \Omega^1(Q,\g)$ such that the composition of maps 
\begin{equation}\label{eq:positivitycondition2} T\CC \stackrel{j^\vartheta}{\lra} \At(Q)\stackrel{T\tau}{\lra} V_\tau=(\tau^*T\p \DD)/G\end{equation}
is an orientation preserving isomorphism. If $\CC=\p\Sigma$, and the projective structure on $\CC$ 
is induced by a hyperbolic structure on $\Sigma$, then $Q=\p P$ is the restriction of $P$, with $\tau=\p\sigma$ the restriction of $\sigma$,  and with  
connection 1-form $\vartheta=\p\theta$ the pullback of $\theta$.

\subsection{Hyperbolic structures from flat bundles}
We shall now reverse the procedure and take as our starting point the data 
\begin{equation}\label{eq:psigma}
\xymatrix{
	P  \ar[r]^{\sigma}\ar[d] &\ol{\DD}\\
	\Sigma & 
}
\end{equation}

of a principal $G$-bundle and a \emph{developing section} $\sigma$ (a  $G$-equivariant  morphism of manifolds with boundary).
Over the interior $\on{int}(\Sigma)$, the map $\sigma$ takes values in $\DD=G/K$ (where $K\cong \SO(2)$ is the stabilizer of 
$i\in \HH\cong \DD$), and so defines a reduction of structure group 
\begin{equation}\label{eq:pk} P_K\subset P|_{\on{int}(\Sigma)}.\end{equation}
On the other hand, the boundary restriction 
$\partial\sigma\colon \partial P=P|_{\partial\Sigma}\to \p\DD$
takes values in $\partial\DD=G/B^-$, where $B^-$ is the stabilizer of 
$0\in \partial\HH\cup\{\infty\} \cong\partial \DD$, and so defines a reduction of structure group 
\begin{equation}\label{eq:pb}
	 (\partial P)_{B^-}\subset \partial P.\end{equation}
Note that $B^-\subset G=\PSL(2,\R)$ is the image of the group of lower triangular matrices with positive diagonal entries. 
It is isomorphic to  $\R\rtimes \R_{>0}$; in particular it is \emph{contractible}. 
The role of $\sigma$ is to combine these two reductions of structure group: to $K$ over the  interior, and to $B^-$ over the boundary.


%
\begin{definition}
	A connection $\theta\in \A(P)$ is  called $\sigma$-\emph{positive} (or simply \emph{positive}, if $\sigma$ is understood)  if the map $T\Sigma\to V_\sigma$ given in 
	\eqref{eq:positivitycondition} is an orientation preserving isomorphism. 
\end{definition}
Denote by $\A^{\on{pos}}(P)$ the space of positive connections, and by $\A^{\on{pos}}_{\on{flat}}(P)$ those which are furthermore flat. 

 
There is a natural map from $\A^{\on{pos}}(P)$ to the space of 0-metrics: The standard 0-metric on $T\ol{\DD}$ gives a 
0-metric on $V_\sigma=\sigma^* T\ol{\DD}/G$; a 
positive connection gives an isomorphism 
$T\Sigma\cong V_\sigma$. 

\begin{proposition}\label{prop:flat=hyperbolic}
If $\theta\in \A^{\on{pos}}(P)$ is flat, then the 0-metric $\gz$ defined by $\theta$ is hyperbolic. 
\end{proposition}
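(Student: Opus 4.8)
The plan is to argue locally and to identify the induced 0-metric with the pullback of the Poincar\'e 0-metric under a local developing map, exactly as in the tautological case. Fix $m\in\Sigma$ and a simply connected open neighborhood $U$, together with a trivialization $P|_U\cong U\times G$ in which $\theta$ corresponds to a $\g$-valued 1-form $A$ with $\d A+\hh[A,A]=0$. Since $U$ is simply connected, $A=-(\d g)g^{-1}=g\bullet 0$ for some $g\colon U\to G$; after modifying the trivialization by $g$ we may assume $A=0$. In this trivialization $G$-equivariance forces the developing section to be $\sigma(x,h)=h^{-1}.f(x)$ with $f:=\sigma(\cdot,e)\colon U\to\ol{\DD}$, and because $\sigma$ is a morphism of manifolds with boundary, so is $f$: it pulls back a boundary defining function on $\ol{\DD}$ to one on $U$.

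Next I would unwind the definition of $\gz$. With $A=0$ the horizontal lift $j^\theta$ sends $v\in T_xU$ to the horizontal vector $(v,0)$ at $(x,e)$, and the section $x\mapsto(x,e)$ identifies $V_\sigma|_U=(\sigma^*T\ol{\DD})/G$ with $f^*T\ol{\DD}$; under these identifications the composition \eqref{eq:positivitycondition} becomes simply the tangent map $Tf\colon TU\to f^*T\ol{\DD}$ (this is the same computation as the one recorded after \eqref{eq:positivitycondition} for charts of a hyperbolic structure). Positivity of $\theta$ thus says precisely that $f$ is an orientation preserving local diffeomorphism. Since the standard 0-metric on $T\ol{\DD}$ is $G$-invariant (as $G=\on{PSU}(1,1)$ preserves the Poincar\'e metric \eqref{eq:poincaredisk}), the 0-metric it induces on $V_\sigma$ pulls back under $Tf$ to $f^*g_{\ol{\DD}}$, where $g_{\ol{\DD}}$ denotes the Poincar\'e 0-metric; that is, $\gz|_U=f^*g_{\ol{\DD}}$. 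Equivalently, the restriction of the triple $(P,\theta,\sigma)$ to $U$ is the pullback along $f$ of the tautological triple on $\ol{\DD}$ described in the Remark following \eqref{eq:positivitycondition}, whose attached 0-metric is the Poincar\'e metric.

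Finally, since $f$ is a morphism of manifolds with boundary it maps $U\cap\on{int}(\Sigma)$ into $\DD$, where $g_{\ol{\DD}}$ is the hyperbolic metric of Gauss curvature $-1$. As Gauss curvature is a local, diffeomorphism-invariant quantity, $f^*g_{\ol{\DD}}$ has Gauss curvature $-1$ on $U\cap\on{int}(\Sigma)$; hence $K_{\gz}=-1$ on all of $\on{int}(\Sigma)$, so $\gz$ is hyperbolic. The only genuinely delicate point is the bookkeeping near the boundary, i.e. checking that the gauge-trivializing $g$ and the resulting developing map $f$ interact correctly with boundary defining functions so that $\gz|_U$ is honestly the restriction of a 0-metric rather than merely a metric on the interior; this is exactly where the hypothesis that $\sigma$ be a morphism of manifolds with boundary enters, as recorded in the excerpt. (An alternative route, avoiding developing maps, would compare $\theta$ over $\on{int}(\Sigma)$, where $\sigma$ gives the $K$-reduction $P_K$ of \eqref{eq:pk}, with the coframe connection \eqref{eq:A} of $\gz$ and invoke the curvature formula \eqref{eq:curvature}; but matching the two connections is no shorter.)
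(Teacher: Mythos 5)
Your proof is correct and is essentially the paper's own argument: the paper likewise chooses local trivializations in which the flat connection becomes trivial, reads off the developing maps $\phi_\alpha$ from $\sigma$, and uses positivity to conclude that these are orientation preserving local diffeomorphisms into $\ol{\DD}$, so that $\gz$ is the 0-metric of the resulting hyperbolic structure. Your additional remarks on the boundary bookkeeping and the direct curvature computation are just more explicit versions of what the paper leaves implicit.
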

\begin{proof}
Choose  local trivializations $P|_{U_\alpha}\cong U_\alpha\times G$ taking $\theta$ to the trivial connection, and write 
$\sigma(x,g)=g^{-1}. \phi_\alpha(x)$ in terms of the trivialization. The positivity condition ensures that the maps $\phi_\alpha\colon U_\alpha\to \ol{\DD}$ are  orientation preserving diffeomorphisms onto their image, and so define a hyperbolic structure. Clearly, $\gz$ is the 0-metric associated to 
this hyperbolic structure. 
\end{proof}

\begin{remark}\label{rem:leaves}
The construction may also be understood as follows: A flat connection $\theta$ determines a horizontal foliation of $P$. 
Positivity means exactly that $\sigma$ restricts to orientation-preserving local diffeomorphisms from the horizontal leaves 
to $\ol{\DD}$. Hence, the hyperbolic structure on $\ol{\DD}$ pulls back to a $G$-invariant 
hyperbolic structure on the horizontal foliation, which then descends to $\Sigma$.  
\end{remark}

Given an oriented 1-manifold $\CC$, we may similarly consider the data 
\begin{equation}\label{eq:qtau}
\xymatrix{
	Q  \ar[r]^{\tau}\ar[d] &\p{\DD}\\
	\CC & 
}
\end{equation}
of a principal $G$-bundle over $\CC$ with a developing section (a $G$-equivariant map to $\p\DD\cong \RP(1)$). A connection $\vartheta$ on $Q$ is called positive if the 
map \eqref{eq:positivitycondition2} is an orientation preserving isomorphism. Such a connection defines a projective structure on $\CC$; conversely, every projective structure on $\CC$ arises in this way. 

Returning to the pair $(P,\sigma)$ for surfaces with boundary, we have:

\begin{proposition}	
	A connection $\theta\in \A(P)$ satisfies the $\sigma$-positivity condition 
	along the boundary $\pS$  if and only if the pullback 	
	connection $\partial\theta\in \A(\partial P)$ is $\partial\sigma$-positive. 
\end{proposition}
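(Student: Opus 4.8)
The plan is to reduce the statement to a fiberwise comparison of the two positivity conditions \eqref{eq:positivitycondition} and \eqref{eq:positivitycondition2}, using the fact that near $\pS$ the map $\sigma$ takes values in $\ol{\DD}$ while $\p\sigma$ takes values in $\p\DD$, and that the inclusion $\p\DD\hookrightarrow\ol{\DD}$ is an embedding of manifolds with boundary. First I would observe that, since $\theta$ restricts to $\p\theta$ along the boundary, the horizontal lift $j^\theta\colon T\Sigma\to\At(P)$ restricts, on $\pS$, to a map compatible with $j^{\p\theta}\colon T\pS\to\At(\p P)$: concretely, $j^{\p\theta}$ is the composition of $j^\theta|_{\pS}$ with the inclusion $T\pS\hookrightarrow T\Sigma|_{\pS}$, followed by $\At(\p P)\hookrightarrow\At(P)|_{\pS}$.

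Next I would analyze the target. Because $\sigma$ is a morphism of manifolds with boundary mapping $\p P$ into $\p\DD$, its differential $T\sigma$ restricts along $\p P$ to a map that sends the $0$-tangent bundle $\zz T(\p P)$-part into $T\p\DD$ and sends the normal (boundary-defining) direction to the normal direction of $\p\DD\subset\ol{\DD}$. Passing to the quotient by $G$, the bundle $V_\sigma=(\sigma^*T\ol{\DD})/G$ restricts over $\pS$ to an extension whose sub-object is $V_{\p\sigma}=(\p\sigma^*T\p\DD)/G$ and whose quotient is the canonical trivial normal line bundle. The key point is that $T\sigma\circ j^\theta$, restricted to $\pS$ and to the subbundle $T\pS\subset T\Sigma|_{\pS}$, is precisely $T(\p\sigma)\circ j^{\p\theta}$ under these identifications; this is immediate in a local trivialization of $P$ over a boundary chart $U_\alpha$ flattening $\theta$, where both maps become the tangent map $T\phi_\alpha$ (resp.\ its boundary restriction $T(\p\phi_\alpha)$) to the developing charts.

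Given this, the equivalence follows. If $\theta$ is $\sigma$-positive along $\pS$, then $T\sigma\circ j^\theta$ is an orientation-preserving isomorphism $T\Sigma|_{\pS}\to V_\sigma|_{\pS}$; restricting to the $T\pS$-summand and projecting away the normal line, and using that the isomorphism respects the boundary filtration and orientations, we get that $T(\p\sigma)\circ j^{\p\theta}\colon T\pS\to V_{\p\sigma}$ is an orientation-preserving isomorphism, i.e.\ $\p\theta$ is $\p\sigma$-positive. Conversely, $\p\sigma$-positivity of $\p\theta$ gives that the induced map on the boundary subbundle and on the normal quotient are both orientation-preserving isomorphisms; since an endomorphism of an extension of line bundles which is an isomorphism on sub and quotient is itself an isomorphism, and orientations multiply correctly, $T\sigma\circ j^\theta$ is an orientation-preserving isomorphism along $\pS$, as desired.

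I expect the main obstacle to be purely bookkeeping: making precise the claim that $V_\sigma|_{\pS}$ is canonically an extension of $V_{\p\sigma}$ by the trivial normal line bundle, and that the positivity isomorphism is filtered with respect to this extension. This is where one must use carefully the definition of $\sigma$ as a morphism of manifolds with boundary (so that the pullback of a boundary defining function is a boundary defining function, hence $T\sigma$ sends normals to normals) together with the induced morphism of $0$-tangent bundles mentioned in the footnote. Once that structure is set up, the orientation comparison and the two implications are formal and reduce, in a local trivialization, to the corresponding statement for $T\phi_\alpha$ versus $T(\p\phi_\alpha)$.
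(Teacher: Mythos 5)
Your argument is essentially the paper's: both reduce to the observation that the map $T\Sigma|_{\pS}\to V_\sigma|_{\pS}$ is filtered, restricting on $T\pS$ to the map defined by $\p\theta$ and inducing on the normal-bundle quotients a canonical, $\theta$-independent, orientation-preserving isomorphism (coming from $\sigma$ being a morphism of pairs $(P,\p P)\to(\ol{\DD},\p\DD)$, i.e.\ boundary defining functions pull back to boundary defining functions), so that positivity of the total map is equivalent to positivity of the sub-map. The only blemish is your appeal to a local trivialization ``flattening $\theta$'' to check the compatibility of $j^\theta$ with $j^{\p\theta}$ --- such a trivialization exists only for flat connections, whereas the proposition concerns arbitrary $\theta\in\A(P)$; but the compatibility holds in general simply because $\p\theta$ is the pullback of $\theta$ and $T\sigma$ maps $T\p P$ into $T\p\DD$, so this is easily repaired.
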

\begin{proof}
	The map $T\Sigma\to V_\sigma$	given by $\theta$ restricts to the map  $T\pS\to V_{\p\sigma}$ given by $\partial\theta$. The resulting map 
	on quotients, \[ \nu(\Sigma,\pS)\to V_\sigma|_{\p\Sigma}/V_{\p\sigma},\] does not depend on the choice of $\theta$. In fact,  it is simply 
	the map obtained by applying the normal bundle functor to the map of pairs $\sigma\colon (P,\p P)\to (\DD,\p\DD)$, using that 
	\[ \nu(\Sigma,\partial\Sigma)=\nu(P,\partial P)/G,\ \ \ \ V_\sigma|_{\p\Sigma}/V_{\p\sigma}=	(\p\sigma)^*\nu(\DD,\partial\DD)/G.\] 
	In particular, the map on quotients is always an orientation preserving isomorphism. We conclude that $T\Sigma|_{\pS}\to V_\sigma|_{\pS}$	is an orientation preserving isomorphism if and only if $T\pS\to V_{\p\sigma}$ is an  orientation preserving isomorphism. 
\end{proof}

\subsection{Relationship with coframe formalism}\label{subsec:relation}
Given $(P,\sigma)$, consider the reduction of structure group \eqref{eq:pk} to $K\subset G$. A trivialization of $P_K$  
over $U\subset \on{int}(\Sigma)$ determines a trivialization $P|_U=U\times G$ such that $\sigma(m,g)=g^{-1}. i$. 
Let $A\in \Omega^1(U,\g)$ be the connection 1-form of $\theta$ in this trivialization. Define 1-forms $\alpha_1,\alpha_2$, and $\kappa$ by writing 
	\begin{equation} \label{eq:A2} A=\hh\left(\begin{array}{cc}\alpha_2&\alpha_1-\kappa\\ \alpha_1+\kappa&-\alpha_2
\end{array}
\right).\end{equation}

\begin{proposition}\label{lem:posA}
The connection $\theta\in \A(P)$ is positive over $U$ if and only if  $\alpha_1,\alpha_2$ are an oriented coframe. 
 In this case, $\alpha_1,\alpha_2$ is an orthonormal coframe for the metric $\gz$ defined by $\theta$; if the connection is flat then $\kappa$ is the  spin connection for this coframe. 
\end{proposition}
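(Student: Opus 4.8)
The plan is to unwind the definitions on both sides and compare. First I would recall that the reduction $P_K\subset P|_{\on{int}(\Sigma)}$ is precisely the locus where $\sigma$ takes the value $i\in\HH\cong\DD$, so a local section of $P_K$ over $U$ gives a trivialization $P|_U=U\times G$ in which $\sigma(m,g)=g^{-1}.i$. In this trivialization the associated bundle $V_\sigma=\sigma^*T\ol\DD/G$ is identified with the trivial bundle $U\times T_i\DD$, and the composite map $T\Sigma\xrightarrow{j^\theta}\At(P)\xrightarrow{T\sigma}V_\sigma$ of \eqref{eq:positivitycondition} becomes, at a point $m$, the map $T_mU\to T_i\DD$ sending a tangent vector $v$ to $-A_m(v)_{\DD}$, the value at $i$ of the fundamental vector field of $A_m(v)\in\g$ (the sign and the appearance of $A$ being exactly the horizontal-lift bookkeeping from the appendix). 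Concretely, writing $A$ in the form \eqref{eq:A2}, one computes the infinitesimal action of $\mf{sl}(2,\R)$ on $\HH$ at $i$: the element $\hh\big(\begin{smallmatrix}\alpha_2&\alpha_1-\kappa\\ \alpha_1+\kappa&-\alpha_2\end{smallmatrix}\big)$ moves $i$ with velocity whose real and imaginary parts are, up to the standard normalization of the hyperbolic metric \eqref{eq:poincaremetric}, exactly $\alpha_1$ and $\alpha_2$. Thus under the identification $T_i\DD\cong\R^2$ (orthonormal basis from the Poincaré metric, oriented), the map \eqref{eq:positivitycondition} is represented by the matrix with rows $\alpha_1,\alpha_2$.

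Given that computation, positivity of $\theta$ over $U$ — that \eqref{eq:positivitycondition} be an orientation-preserving isomorphism — is exactly the statement that $(\alpha_1,\alpha_2)$ is a pointwise-independent, correctly-oriented pair of $1$-forms on $U$, i.e.\ an oriented coframe; this is the first assertion. For the second, the $0$-metric $\gz$ defined by $\theta$ is by construction the pullback under \eqref{eq:positivitycondition} of the standard $0$-metric on $V_\sigma$, which in our trivialization and orthonormal identification is the Euclidean metric on $T_i\DD$; hence $\gz|_U=\alpha_1^2+\alpha_2^2$, so $(\alpha_1,\alpha_2)$ is an orthonormal coframe. Finally, if $\theta$ is flat, I would observe that the structure equations for $\kappa$ — namely $\d\alpha_1=-\kappa\wedge\alpha_2$, $\d\alpha_2=\kappa\wedge\alpha_1$ — are forced: the flatness $F_A=\d A+\hh[A,A]=0$ written out in the parametrization \eqref{eq:A2} yields $\d\alpha_1=-\kappa\wedge\alpha_2$, $\d\alpha_2=\kappa\wedge\alpha_1$ (the off-diagonal and the remaining combination of entries), which by definition exhibit $\kappa$ as the spin connection of the coframe $(\alpha_1,\alpha_2)$. (This is essentially the computation behind \eqref{eq:curvature}: the curvature of $A$ in the form \eqref{eq:A2} is $(K_\gz+1)\big(\begin{smallmatrix}0&-1\\1&0\end{smallmatrix}\big)\d\vol_\gz$ once the first two structure equations hold, and vanishing of $F_A$ forces them to hold and additionally $K_\gz=-1$.)

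The main obstacle is the first step: getting the identification \eqref{eq:positivitycondition} $\leftrightarrow$ "rows are $\alpha_1,\alpha_2$" with all signs and factors of $\hh$ correct. One must be careful that the horizontal lift $j^\theta$ and the derivative $T\sigma$ compose to the fundamental-vector-field map with the right sign convention (the one fixed in the appendix's treatment of $\At(P)$), that the bracket notation $[A]\in\PGL$ does not introduce a spurious factor of $2$ when passing between $\SL(2,\R)$ and $\PSL(2,\R)$ actions, and that the orthonormal oriented frame chosen on $T_i\DD$ from \eqref{eq:poincaredisk}/\eqref{eq:poincaremetric} matches the orientation convention used to define $V_\sigma$. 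Once this normalization lemma is nailed down, everything else is a direct reading-off from \eqref{eq:A2} and the flatness identity, with no further subtlety.
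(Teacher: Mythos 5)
Your proposal is correct and takes essentially the same route as the paper: the paper identifies $V_\sigma|_U$ with $U\times\k^\perp$ via $T(G/K)=G\times_K\k^\perp$ and reads off the map \eqref{eq:positivitycondition} as the symmetric part of $A$, which is exactly your fundamental-vector-field computation $\xi\mapsto\xi^\sharp(i)=\alpha_1+i\alpha_2$ in concrete form (and your explicit expansion of $F_A=0$ into the structure equations supplies the spin-connection claim, which the paper leaves implicit in \eqref{eq:curvature}). The overall sign you flag as the main obstacle is in fact harmless, since $-\on{id}$ on a rank-two bundle preserves orientation and the induced metric, so the conclusions are unaffected either way.
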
	
\begin{proof}
Since $T\DD\cong T\HH=T(G/K)=G\times_K \k^\perp$, we have 
\[ V_\sigma|_{\on{int}(\Sigma)}=(P_K\times \k^\perp)/K=\k^\perp(P_K).\] 
Hence, $V_\sigma|_U=U\times \k^\perp$, and \eqref{eq:positivitycondition} 
becomes a map 
\begin{equation}\label{eq:positivity3} TU\to U\times \k^\perp.\end{equation}
Viewed as an element of $\Omega^1(U,\k^\perp)$, this map is the symmetric part of the connection 1-form $A$. The condition that  \eqref{eq:positivity3} is an orientation preserving isomorphism means exactly that $\alpha_1,\alpha_2$ is an oriented orthonormal coframe. Finally, the metric on $V_\sigma|_U$ corresponds to the standard metric on $\k^\perp=T_i\HH$, and the 
metric on $TU$ induced by \eqref{eq:positivity3} is exactly the one defined by the coframe $\alpha_1,\alpha_2$. 
\end{proof}
%

The reduction of structure group to $K$ does not extend to the boundary. To describe the limiting behaviour, we shall work with the following lemma. 
Let $\varrho\in C^\infty(\Sigma)$ be a boundary defining function. 

\begin{lemma}[Normal form at boundary] \label{lem:normalform}
	Given $(P,\sigma)$, there exists $\epsilon>0$ and a trivialization $P|_U=U\times G$ over $U=\varrho^{-1}\big([0,\epsilon)\big)$ such that, in terms of the trivialization,  
	\[ \sigma(m,g)=g^{-1}. (i\varrho(m)).\] 	
\end{lemma}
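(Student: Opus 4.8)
The plan is to build the desired trivialization in two stages: first a trivialization of the $B^-$-reduction $(\partial P)_{B^-}$ along the boundary, then an extension inward using a tubular/collar argument. Since $\sigma$ is a developing section and $\partial\sigma\colon \partial P\to \partial\DD=G/B^-$ takes values in $\partial\DD\cong G/B^-$, the preimage $(\partial P)_{B^-}=(\partial\sigma)^{-1}(0)$ (recall $0\in\partial\HH\cup\{\infty\}\cong\partial\DD$ has stabilizer $B^-$) is a principal $B^-$-subbundle of $\partial P$. Because $B^-\cong\R\rtimes\R_{>0}$ is contractible, the bundle $(\partial P)_{B^-}\to\partial\Sigma$ is trivial over each component of $\partial\Sigma$ (a principal bundle with contractible structure group over a $1$-manifold); choose such a trivialization $\partial\Sigma\times B^-\xrightarrow{\sim}(\partial P)_{B^-}$. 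This induces a $G$-equivariant trivialization $\partial P=\partial\Sigma\times G$ in which $\partial\sigma(m,g)=g^{-1}.\,0$.

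Next I would extend this to a collar $U=\varrho^{-1}([0,\epsilon))$. Pick any $G$-equivariant trivialization $P|_U=U\times G$ agreeing with the one just chosen on $\partial\Sigma$; write $\sigma(m,g)=g^{-1}.\,F(m)$ for a morphism of manifolds-with-boundary $F\colon U\to\ol\DD$ with $F|_{\partial\Sigma}$ landing in $\partial\DD$ at the point $0$, i.e.\ $F(m)=0$ for $m\in\partial\Sigma$. Since $F$ is a morphism of manifolds with boundary, $\varrho$ is (up to a positive factor) a boundary defining function pulled back along $F$ from a boundary defining function on $\ol\DD$; after shrinking $\epsilon$ and using the half-plane picture $\ol\DD\cong\ol\HH\cup\{\infty\}$ near $0\in\partial\DD$ (with coordinate $z=x+iy$ and $y$ a boundary defining function), we may write $F(m)=x(m)+i\,y(m)$ with $y(m)=c(m)\varrho(m)$ for a smooth positive function $c$ and $x(m)=O(\varrho)$ vanishing on the boundary. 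Now I want to absorb the discrepancy between $F(m)$ and $i\varrho(m)$ by a smooth gauge transformation $h\colon U\to G$ with $h|_{\partial\Sigma}=e$: since $B^-$ acts simply transitively on $\partial\HH\setminus\{\infty\}$ and the affine group it contains acts transitively (with $\R_{>0}$-scaling and $\R$-translation) on $\HH$ along vertical lines through rescalings $z\mapsto \lambda z$ and horizontal translations $z\mapsto z+t$, there is for each $m$ a unique element $b(m)\in B^-$ (of the form: translate by $-x(m)$, then scale by $1/c(m)$) with $b(m).\,F(m)=i\varrho(m)$; this $b$ is smooth in $m$ and equals $e$ on $\partial\Sigma$ because $x|_{\partial\Sigma}=0$ and we can arrange $c|_{\partial\Sigma}=1$ by our choice of collar. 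Replacing the trivialization $(m,g)\mapsto(m,g b(m)^{-1})$ — equivalently applying the gauge transformation $b$ — yields $\sigma(m,g)=g^{-1}.\,(i\varrho(m))$, as required.

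The main obstacle I anticipate is the regularity bookkeeping at the boundary: ensuring that the correcting gauge transformation $b\colon U\to G$ is genuinely \emph{smooth up to and across} $\partial\Sigma$ and equals the identity there, despite being defined via the limiting $B^-$-structure. Concretely, one must check that writing $F(m)=x(m)+i\,y(m)$ with the stated properties is legitimate — this is exactly where the hypothesis that $\sigma$ is a morphism of manifolds with boundary (so that $F^*(\text{boundary defining function})$ is again a boundary defining function, hence $y=c\varrho$ with $c>0$ smooth) does the work — and that the elementary $B^-$-element achieving $b(m).F(m)=i\varrho(m)$ depends smoothly on these smooth data with no degeneration as $\varrho\to 0$. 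Since $B^-$ is closed under the operations used (horizontal translation and positive dilation fixing $0\in\partial\HH$), smoothness of $b$ follows from smoothness of $x$ and $c$; and $b|_{\partial\Sigma}=e$ follows once the initial collar trivialization is chosen to be compatible with the fixed trivialization of $(\partial P)_{B^-}$, which is possible because the inclusion $\partial\Sigma\hookrightarrow U$ is a deformation retract and $G$ is a Lie group, so trivializations over $\partial\Sigma$ extend over the collar. All remaining steps are routine.
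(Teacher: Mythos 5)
Your proof is correct and follows essentially the same route as the paper's: trivialize the (contractible-structure-group) $B^-$-reduction $(\p P)_{B^-}$ along $\p\Sigma$, extend to a collar, write $\sigma(m,g)=g^{-1}.F(m)$ with $F$ a morphism of manifolds with boundary vanishing on $\p\Sigma$, and correct $F$ to $i\varrho$ by the explicit translation-plus-dilation gauge transformation. One small slip: the correcting element you describe (translate by $-x(m)$, then dilate by $1/c(m)$) is upper triangular, so it lies in $B^+=AN^+$ rather than $B^-$ (translations $z\mapsto z+t$ fix $\infty$, not $0\in\p\HH$); this is harmless, since the lemma only requires a smooth $G$-valued gauge transformation whose boundary restriction fixes $0\in\p\DD$, and on $\p\Sigma$ your element reduces to the dilation $\diag(c^{-1/2},c^{1/2})\in A$, which does fix $0$.
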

As usual, we identify $\ol{\DD}\cong \ol{\HH}\cup\{\infty\}$; thus $i\varrho(m)$ is regarded as an element of the closed upper half plane.

\begin{proof} 
	The choice of a trivialization of $\sigma^{-1}(0)=(\p P)_{B^-}$ gives a trivialization $\p P=\pS\times G$ such that 
	$(\p\sigma)(m,g)=g^{-1}. 0$ for $m\in\pS$. Extend it to a trivialization of $P$ over 
	$U=\varrho^{-1}\big([0,\epsilon)\big)$ for some $\epsilon>0$.
	In terms of this trivialization, $\sigma$ is of the form 
	\[ \sigma(m,g)=g^{-1}. f(m),\] 
	where $f\colon U\to \ol{\DD}\cong \ol{\HH}\cup\{\infty\}$ 
	is a morphism of manifolds with boundary, with $f|_{\pS}=0$. Taking $\epsilon$ smaller if needed, we may assume $f$ takes values in $\ol{\HH}\subset \ol{\DD}$.  In particular,  the imaginary part $\on{Im}(f)$ is a boundary defining function, and so 
	$\varrho=u^2\, \on{Im}(f)$ for some function $u\in C^\infty(U,\R_{>0})$. We have 
			\[ \left[\begin{array}{cc}
		u& 0\\ 0 & u^{-1}
		\end{array}\right] \left[\begin{array}{cc}
		1& -\on{Re}(f)\\ 0 & 1
		\end{array}\right]. f= \left[\begin{array}{cc}
		u& 0\\ 0 & u^{-1}
		\end{array}\right]
		. i\on{Im}(f)
		=	i\varrho
		\]
	Hence, a gauge transformation by $\left[\begin{array}{cc}
	u& 0\\ 0 & u^{-1}
	\end{array}\right] \left[\begin{array}{cc}
	1& -\on{Re}(f)\\ 0 & 1
	\end{array}\right]$ replaces $\sigma(m,g)=g^{-1}. f(m)$ with $g^{-1}. i\varrho(m)$. 
\end{proof} 

Given a connection $\theta\in \A(P)$, let $A\in \Omega^1(U,\g)$ be its connection 1-form in terms of the 
 trivialization from this lemma. Over $U-\partial\Sigma$, the gauge transformation by 
\begin{equation}\label{eq:singgauge}
h=\left[\begin{array}{cc}\rho^{1/2}& 0 \\ 0 & \rho^{-1/2}\end{array}\right] \colon U-\pS\to G\end{equation}
is defined, and satisfies $h^{-1}. i\varrho(m)=i$. Hence, $(h^{-1}. \sigma)(m,g)=g^{-1}. i$, and 
$h^{-1}\bullet A$ is of  the form \eqref{eq:A2}, defining $\alpha_1,\alpha_2,\kappa$.  
As we saw above, the positivity condition 
on $\theta$ means that over the interior, $\alpha_1,\alpha_2$ are an oriented orthonormal coframe. The original connection 1-form is expressed in terms of these data as
\[ A=\hh\left(\begin{array}{cc}\alpha_2-\varrho^{-1}\d\varrho&\varrho(\alpha_1-\kappa)\\ \varrho^{-1}(\alpha_1+\kappa)&-\alpha_2
+\varrho^{-1}\d\varrho
\end{array}
\right).\]
Since $A$ is a \emph{regular} connection 1-form, this shows that $\alpha_1,\alpha_2$ extend to elements  of $\zz\Omega^1(U)$,
and define an adapted orthonormal coframe.

\subsection{Existence of positive connections}

Recall the classification of principal bundles over compact, connected, oriented surfaces
$\Sigma$ with boundary. Let $H$ be a connected Lie group, and $R\to \Sigma$ a principal $H$-bundle  with a given homotopy class of trivializations (framings) of $R|_{\pS}$. Pick $x_0\in \on{int}(\Sigma)$, and choose a trivialization of $R$ over  the punctured surface $\Sigma-\{x_0\}$ such that the trivialization along the boundary is in the prescribed class. 
Also choose a trivialization of $R$ over an  embedded disk $D\subset \on{int}(\Sigma)$ around $x_0$. The 
homotopy class of the transition map $D-\{x_0\}\to H$ defines an element 
\[ \mathsf{e}(R)\in \pi_1(H).\]
If $R_1,R_2$ are two principal $H$-bundles with homotopy classes of trivializations along the boundary, we have $\ez(R_1)=\ez(R_2)$  if and only if there exists a bundle isomorphism $R_1\to R_2$ which intertwines the homotopy classes of 
trivializations over the boundary. In particular, taking $R=\on{Fr}_{\SO(2)}(\Sigma)$ to be the 
oriented orthonormal frame bundle for a Riemannian metric, with its standard trivialization along the boundary (where the first element of a frame is tangent to the boundary, pointing in the positive direction), the element $\mathsf{e}(R)\in \pi_1(\SO(2))=\Z$ is the Euler characteristic $\chi(\Sigma)$ of the surface. Given a principal $G$-bundle $P\to \Sigma$ with developing section $\sigma\colon P\to \ol{\DD}=\ol{\HH}\cup\{\infty\}$ as above, there is a distinguished homotopy class of trivializations along the boundary -- those trivializations for which $\sigma(m,g)=g^{-1}. 0$ for $m\in \pS$. 
Let 
\[ \ez(P,\sigma)\in \pi_1(G)=\Z\] 
be the resulting invariant. 

\begin{proposition}\label{prop:collar}\label{prop:quantumnumber}
	Given two pairs $(P,\sigma)$ and $(P',\sigma')$, we have	$\ez(P,\sigma)=\ez(P',\sigma')$ if and only if there exists an isomorphism 
	$P\to P'$ taking $\sigma$ to $\sigma'$ and inducing the identity on the base. 	
\end{proposition}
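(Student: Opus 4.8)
The plan is to derive this from the classification of framed principal $H$-bundles recalled just above (with $H=G$), combined with the contractibility of $B^-$ and of the fibre $\ol{\DD}$ of the associated bundle $P\times_G\ol{\DD}$; these contractibility facts are what will let me upgrade a bundle isomorphism that merely intertwines \emph{homotopy classes} of boundary trivializations to one matching the developing sections on the nose. The forward implication is immediate: if $\Psi\colon P\to P'$ is an isomorphism over the identity carrying $\sigma$ to $\sigma'$, then any trivialization of $\p P$ over a boundary component in which $\sigma$ takes the standard form $\sigma(m,g)=g^{-1}.0$ is carried by $\Psi$ to a trivialization of $\p P'$ in which $\sigma'$ takes the standard form; hence $\Psi$ intertwines the distinguished homotopy classes of boundary trivializations, and the classification gives $\ez(P,\sigma)=\ez(P',\sigma')$.

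For the converse, assume $\ez(P,\sigma)=\ez(P',\sigma')$. The classification produces an isomorphism $\Psi_0\colon P\to P'$ over the identity intertwining the distinguished homotopy classes of boundary trivializations; pushing $\sigma$ forward along $\Psi_0$ gives a developing section $\sigma_1$ of $P'$ which, by construction of $\Psi_0$, induces the \emph{same} homotopy class of boundary trivializations of $\p P'$ as $\sigma'$ does. It then suffices to produce an automorphism $\Phi$ of $P'$ over the identity carrying $\sigma_1$ to $\sigma'$, since then $\Psi=\Phi\circ\Psi_0$ is the required isomorphism. I would build $\Phi$ in two stages: first match the two developing sections near $\pS$, then match them over the interior.

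Near the boundary, Lemma~\ref{lem:normalform} provides trivializations of $P'$ on collar neighbourhoods of $\pS$ in which $\sigma_1$, respectively $\sigma'$, takes the canonical form $(m,g)\mapsto g^{-1}.(i\varrho(m))$; restricting each of these trivializations to $\pS$ gives a trivialization of $\p P'$ in which $\p\sigma_1$, respectively $\p\sigma'$, is $(m,g)\mapsto g^{-1}.0$, hence lying in the distinguished class of $\sigma_1$, respectively $\sigma'$. Since these two classes coincide, the change of trivialization relating them restricts on $\pS$ to a nullhomotopic map $\pS\to G$, and a cutoff then yields a gauge transformation $g$ of $P'$, equal to the identity away from a collar, with $g.\sigma_1=\sigma'$ near $\pS$; replacing $\sigma_1$ by $g.\sigma_1$, we may assume $\sigma_1=\sigma'$ on a collar. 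Over $\on{int}(\Sigma)$ the sections $\sigma_1,\sigma'$ of $P'\times_G\DD$ now agree near $\pS$, and since the fibre $\DD$ is uniquely geodesic, geodesic interpolation (a $G$-natural operation, hence defined on associated bundles) gives a smooth path $s_t$ of sections from $\sigma_1$ to $\sigma'$ that is constant near $\pS$. Assigning to each point of $\on{int}(\Sigma)$ the transvection of the fibre carrying $\sigma_1$ to $s_t$ — again $G$-natural — defines a path $u_t$ of gauge transformations of $P'|_{\on{int}(\Sigma)}$ with $u_0=\mathrm{id}$, $u_t.\sigma_1=s_t$, and $u_t=\mathrm{id}$ near $\pS$; hence $u_1$ extends by the identity to a gauge transformation of $P'$ with $u_1.\sigma_1=\sigma'$. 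Setting $\Phi=u_1\circ g$ completes the construction.

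The main obstacle is the boundary step: one must extract from the numerical equality of the two $\ez$-invariants the statement that the gauge transformation matching the boundary developing sections lies in the identity component of $\Gau(\p P')$, and hence globalizes. This amounts to identifying, for a developing section over a circle, its distinguished trivialization class with its winding number under the homotopy equivalence $G\to G/B^-=\RP^1$ (a homotopy equivalence precisely because $B^-$ is contractible), together with the routine fact that the gauge group acts transitively on each connected component of the space of boundary developing sections. Everything else — geodesic convexity of $\DD$, the transvection construction, and the various cutoff arguments — is soft.
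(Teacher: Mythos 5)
Your proof is correct and follows essentially the same route as the paper's: reduce to a single bundle via the classification of framed bundles, arrange that the developing sections coincide near the boundary using Lemma~\ref{lem:normalform} together with the coincidence of the distinguished homotopy classes (contractibility of $B^-$), and then remove the remaining discrepancy over the interior with the fibrewise transvection of $\DD$, i.e.\ the paper's smooth equivariant map $\phi\colon\DD\times\DD\to G$. Your additional observation that the resulting gauge transformation lies in the identity component is not needed for the statement but is harmless.
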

\begin{proof}
	The necessity of the condition is obvious. To show that it is also sufficient, suppose $\ez(P,\sigma)=\ez(P',\sigma')$. 
	We may assume $P=P'$, and that $\sigma,\sigma'$ define the same homotopy class of trivializations of $\p P=P|_{\pS}$. 
	 Using the normal form near the boundary (Lemma \ref{lem:normalform}), we may assume that $\sigma,\sigma'$ coincide over an open neighborhood of the boundary. 	Over the interior, $\sigma,\sigma'$ may be regarded as sections of the associated bundle with fiber $\DD$, which agree near the  boundary.  As is well-known, given any two distinct points $z,z'\in \DD$ there is a unique element $\phi(z,z')\in G$ taking $z$ to $z'$ and preserving the geodesic through $z,z'$. It extents smoothly to a map $\phi\colon \DD\times \DD\to G$ with $\phi(g.z,g.z')=g\phi(z,z') g^{-1}$.  
	 Consequently,we obtain a gauge transformation $h\in \Gau(P)$ taking $\sigma$ to $\sigma'$.  
	 This gauge 
	 transformation is trivial near the boundary since $\sigma,\sigma'$ agree there.
\end{proof}


The existence of $\sigma$-positive connections places a topological condition on $(P,\sigma)$.

\begin{proposition}\label{prop:eulerorelse}
	Let $\Sigma$ be a compact, connected, oriented surface with boundary, and $P\to \Sigma$ a principal $G$-bundle with a developing section $\sigma\colon P\to \ol{\DD}$.  Then the space of positive connections is empty unless $\mathsf{e}(P,\sigma)=\chi(\Sigma)$. 
\end{proposition}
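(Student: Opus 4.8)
The plan is to show that a $\sigma$-positive connection $\theta$ identifies $T\Sigma$ with $V_\sigma=(\sigma^*T\ol\DD)/G$ as oriented vector bundles, and then to compute the relevant characteristic invariant of $V_\sigma$. First I would fix a Riemannian 0-metric coming from any positive connection and recall from the discussion preceding Proposition \ref{prop:eulerorelse} that, for the oriented orthonormal frame bundle $\on{Fr}_{\SO(2)}(\Sigma)$ equipped with its \emph{standard} boundary trivialization (first frame vector tangent to the boundary, positively oriented), the invariant equals $\chi(\Sigma)$. So it suffices to prove two things: (i) a positive connection gives an orientation-preserving bundle isomorphism $T\Sigma\cong V_\sigma$, compatible with the natural boundary trivializations up to homotopy, so that $\ez(P,\sigma)=\mathsf{e}\big(\on{Fr}_{\SO(2)}(V_\sigma)\big)$; and (ii) this last invariant equals $\chi(\Sigma)$ because $V_\sigma\cong T\Sigma$ as \emph{framed} bundles along the boundary.

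For step (i), the isomorphism $T\Sigma\to V_\sigma$ is exactly the composition $T\sigma\circ j^\theta$ appearing in \eqref{eq:positivitycondition}, which is orientation preserving by definition of positivity; a choice of $\SO(2)$-reduction of $V_\sigma$ (a metric and orientation, e.g. the standard 0-metric on $T\ol\DD$ pushed down) then makes $\on{Fr}_{\SO(2)}(T\Sigma)\cong \on{Fr}_{\SO(2)}(V_\sigma)$ as principal $\SO(2)$-bundles, and the Euler number invariant $\mathsf{e}(\cdot)\in\pi_1(\SO(2))=\Z$ is preserved, \emph{provided} the boundary framings match up to homotopy. For step (ii), I would use the normal form of Lemma \ref{lem:normalform}: near the boundary $\sigma(m,g)=g^{-1}.(i\varrho(m))$, and one reads off from Section \ref{subsec:relation} that in the associated trivialization $V_\sigma$ restricted to a collar is identified with the constant bundle modelled on $T\ol\HH$ along the curve $y=\varrho$, with $\p_y$ (equivalently $\varrho^{-1}\d\varrho$ in the 0-picture) spanning the canonical subbundle and $\p_x$ the complementary direction tangent to $\pS$. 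That is precisely the standard framing of $T\Sigma$ along $\pS$ up to the positive-function rescalings that do not affect homotopy classes of framings. Hence the two framed bundles agree, and $\ez(P,\sigma)=\mathsf{e}(\on{Fr}_{\SO(2)}(V_\sigma))=\mathsf{e}(\on{Fr}_{\SO(2)}(\Sigma))=\chi(\Sigma)$.

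The technical heart — and the step I expect to be the main obstacle — is the careful bookkeeping of \emph{which} homotopy class of boundary trivialization enters the definition of $\ez(P,\sigma)$ versus $\mathsf{e}(\on{Fr}_{\SO(2)}(\Sigma))$, and checking these really coincide under the identification $T\Sigma\cong V_\sigma$. The subtlety is that $\ez(P,\sigma)$ is defined using the trivializations where $\sigma(m,g)=g^{-1}.0$ on $\pS$, i.e. the reduction to $B^-$, whereas the Euler characteristic is computed via the reduction to $K=\SO(2)$; a positive connection interpolates between these, and one must verify that the singular gauge transformation $h=\mathrm{diag}(\varrho^{1/2},\varrho^{-1/2})$ of \eqref{eq:singgauge}, while not extending to the boundary, still induces a well-defined homotopy of framings after projecting to the frame bundle of $V_\sigma$ (equivalently, $B^-$ is contractible and deformation retracts onto $K\cap B^-$ in a way compatible with the developing section). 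Once this matching of boundary data is established, the conclusion $\mathsf{e}(P,\sigma)=\chi(\Sigma)$ is immediate from the uniqueness clause in the classification of framed $G$-bundles recalled before the proposition.
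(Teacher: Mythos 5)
Your overall strategy coincides with the paper's: compare $\ez(P,\sigma)$ with the Euler number of the oriented rank-two bundle $V_\sigma$, then use the positivity isomorphism $T\Sigma\cong V_\sigma$ to identify that Euler number with $\chi(\Sigma)$. The second half of this is fine, and is in fact softer than you make it: the homotopy class of framings of $V_\sigma$ along $\pS$ is pinned down by the oriented line subbundle $V_{\p\sigma}$ alone, and the positivity isomorphism restricts over $\pS$ to a map $T\pS\to V_{\p\sigma}$ (this is the content of the proposition relating $\sigma$-positivity along $\pS$ to $\p\sigma$-positivity of the restricted connection). So the boundary framing classes of $T\Sigma$ and $V_\sigma$ match for formal reasons, and no analysis of the singular gauge transformation \eqref{eq:singgauge} is required.

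The genuine gap is the first equality in your chain, $\ez(P,\sigma)=\ez(\on{Fr}_{\SO(2)}(V_\sigma))$, which you assert but never prove. The ``uniqueness clause in the classification of framed bundles'' cannot deliver it: that clause compares two bundles with the \emph{same} structure group, whereas here one must relate a $\pi_1(G)$-valued invariant of the $G$-bundle $P$ to a $\pi_1(\SO(2))$-valued invariant of the rank-two bundle $V_\sigma$. The content sits at the interior puncture $x_0$, not at the boundary: one trivializes $P$ over $\Sigma-\{x_0\}$ (in the boundary class determined by $\sigma$) and over a small disk around $x_0$, arranges that $\sigma$ is constant equal to $i$ on the overlap so that the clutching function is $K$-valued, and observes that the induced clutching function of $\on{Fr}_{\SO(2)}(V_\sigma)$ is its image under the isotropy representation $K\to\SO(T_i\DD)$. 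One must then check that, under the identification \eqref{eq:identification}, this carries the generator of $\pi_1(K)\cong\pi_1(G)$ to the generator of $\pi_1(\SO(2))$ --- a point where a factor of $2$ is easily lost, since $K\cong\SO(2)/\{\pm 1\}$ and the isotropy action on $T_i\DD$ doubles angles; the identification \eqref{eq:identification} is chosen precisely to compensate for this. Your proposal never carries out this comparison, and the step you single out as the ``technical heart'' (interpolating between the $B^-$- and $K$-reductions near the boundary) is aimed at the wrong place.
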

\begin{proof}
	The oriented rank 2 bundle $V_\sigma|_{\p\Sigma}$  has a distinguished rank 1 subbundle  $V_{\p\sigma}$; hence there is 
	a unique homotopy class of trivializations  of $V_\sigma$ along the boundary, taking the subbundle to  
	$\R\oplus 0\subset \R^2$. 
	Letting $\on{Fr}_{\SO(2)}(V_\sigma)$ be the frame bundle for (any) fiber metric, the invariant $\ez(\on{Fr}_{\SO(2)}(V_\sigma))\in \pi_1(\SO(2))=\Z$ is defined. We claim that 
\begin{equation}\label{eq:obvious}	\ez(\on{Fr}_{\SO(2)}(V_\sigma))=\ez(P,\sigma).\end{equation}
To see this, choose a covering of $\Sigma$, consisting of an open subset $U_1=\Sigma-\{x_0\}$ where $x_0\in \on{int}(\Sigma)$, and 
an open neighborhood $U_2$ of $x_0$, contained in the interior of $\Sigma$ and diffeomorphic to an open disk. Choose a trivialization 
of $P$ over $U_1$, inducing the given class of trivializations along the boundary, and choose also a trivialization over $U_2$. 
Let $f_i\colon U_i\to \ol{\DD}$ be the maps describing $\sigma$ in these trivializations. We may arrange that $f_1,f_2$ are both constant 
(equal to $i$) over $U_1\cap U_2$. Then the transition map $U_2-\{x_0\}=U_1\cap U_2\to G$ takes values in $K$. The trivializations of 
$P|_{U_i}$ also trivialize $V_\sigma|_{U_i}$, and the transition map for its frame bundle agrees with that for $P$ under the isomorphism $K\cong \SO(2)$. This proves \eqref{eq:obvious}. 

A positive connection determines an oriented vector bundle isomorphism $T\Sigma\to V_\sigma$, and hence 
\[ \chi(\Sigma)=\ez(\on{Fr}_{\SO(2)}(T\Sigma))=\ez(\on{Fr}_{\SO(2)}(V_\sigma))=\ez(P,\sigma). \qedhere \]
\end{proof}

\section{Automorphisms}In this section, we discuss the structure of the groups of  automorphisms preserving a given developing section, for $G$-bundles over surfaces and over curves. 
Recall that $B^-\subset G=\PSL(2,\R)$ is the image of lower triangular matrices with positive diagonal entries. Thus $B^-=AN^-$ 
where $A$ is the image of positive diagonal matrices, and $N^-=[B^-,B^-]$ is the image of lower triangular matrices with $1$'s on the diagonal.

\subsection{The group $\Aut(Q,\tau)$}
Let $Q\to \CC$ be a principal $G$-bundle over a compact oriented 1-manifold, and $\tau\colon Q\to \p\DD
=\p\HH\cup\{\infty\}
$ a developing section. 

\begin{proposition}
The group $\Gau(Q,\tau)$  of gauge transformations preserving $\tau$
is the group of sections of a group bundle $G(Q,\tau)\to \CC$, with typical fiber $B^-=AN^-$.  
It fits into an exact sequence with the  group  $\Aut(Q,\tau)$ of 
automorphisms of preserving $\tau$,
\[ 1\to \Gau(Q,\tau)\to \Aut(Q,\tau)\to \Diff(\CC)\to 1.\] 
Infinitesimally, $\gau(Q,\tau)\subset \aut(Q,\tau)$ are the sections of a Lie algebroids $\g(Q,\tau)\subset \At(Q,\tau)$, 
described as the kernel of the bundle maps $\g(Q)\subset \At(Q)\to V_\tau$. 
We have  an exact sequence of  Lie algebroids 
\[ 0\to \g(Q,\tau)\to \At(Q,\tau)\to T\CC\to 0.\]	
\end{proposition}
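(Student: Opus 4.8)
The strategy is to analyze the developing section $\tau$ fiberwise and exhibit both $\Gau(Q,\tau)$ and $\Aut(Q,\tau)$ as sections of appropriate (group-)bundles over $\CC$, then deduce the two exact sequences by elementary bundle-theoretic arguments. First I would observe that $\tau\colon Q\to \p\DD=G/B^-$ defines a reduction of structure group $(Q)_{B^-}\subset Q$, namely $(Q)_{B^-}=\tau^{-1}(\{0\})$ in any local trivialization normalized so that $\tau(m,g)=g^{-1}.0$; this is where we use that $B^-$ is exactly the stabilizer of $0\in\p\HH\cup\{\infty\}$. A gauge transformation $h\in\Gau(Q)$, viewed as a $G$-equivariant map $Q\to G$ acting by $q\mapsto q\cdot h(q)$, preserves $\tau$ if and only if $h(q)$ fixes $\tau(q)$ for all $q$, i.e. $h$ takes values in the conjugates of $B^-$ determined by $\tau$. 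Thus $\Gau(Q,\tau)$ is precisely $\Gamma(G(Q,\tau))$ where $G(Q,\tau)=(Q)_{B^-}\times_{B^-}B^-$ (adjoint action), a group bundle with fiber $B^-=AN^-$, which is what is claimed.

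Next I would handle the automorphism sequence. Any $\Phi\in\Aut(Q,\tau)$ covers a diffeomorphism $\ul\Phi\in\Diff(\CC)$, giving the map $\Aut(Q,\tau)\to\Diff(\CC)$; its kernel is by definition $\Gau(Q,\tau)$, so exactness on the left and in the middle is immediate. For surjectivity onto $\Diff(\CC)$ one must lift a given $\phi\in\Diff(\CC)$ to a bundle automorphism preserving $\tau$. This is where the projective-structure interpretation is convenient: a developing section on $Q$ together with a positive connection is the same data as a projective structure on $\CC$ (as recalled just before \eqref{eq:qtau}), and $\phi$ pulls back the projective structure on $\phi(\CC)=\CC$ to another one; comparing developing charts produces the required lift. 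More concretely, using the normal form $\tau(m,g)=g^{-1}.0$ over a trivializing cover, a lift is built chart-by-chart from $\phi$ and the transition functions (which take values in $B^-$ after reduction), and the ambiguity between local lifts is absorbed into $\Gau(Q,\tau)$; a partition-of-unity or direct gluing argument using contractibility of $B^-$ patches these together. This surjectivity step is the one point requiring genuine care, and I expect it to be the main obstacle.

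Finally, the infinitesimal statements follow by differentiating. The Atiyah algebroid $\At(Q)=TQ/G$ has sections the infinitesimal automorphisms, and $T\tau\colon\At(Q)\to V_\tau=(\tau^*T\p\DD)/G$ is a well-defined bundle map (the derivative of the $G$-equivariant map $\tau$, descended to the quotient); its kernel is a subalgebroid $\At(Q,\tau)$ whose sections are exactly $\aut(Q,\tau)=\{\xi\in\Gamma(\At(Q))\mid T\tau(\xi)=0\}$. Restricting to the vertical subbundle $\g(Q)=\ker(\At(Q)\to T\CC)$ gives $\g(Q,\tau)=\ker(\g(Q)\to V_\tau)$; fiberwise this is $\mathfrak{b}^-=\Lie(B^-)$ since $T_0(G/B^-)=\g/\mathfrak{b}^-$, so $\g(Q,\tau)$ is the group-bundle's Lie algebroid. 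The exact sequence $0\to\g(Q,\tau)\to\At(Q,\tau)\to T\CC\to 0$ is then obtained by restricting the anchor sequence $0\to\g(Q)\to\At(Q)\to T\CC\to 0$ to the subbundle $\At(Q,\tau)$: the anchor still surjects onto $T\CC$ because any local vector field on $\CC$ lifts to a local section of $\At(Q,\tau)$ (use the normal-form trivialization again, where one can lift horizontally and then correct by a vertical term in $\g(Q,\tau)$ to kill the $T\tau$-component), and exactness in the middle and on the left is formal. A short dimension count on fibers (the $T\tau$-component lives in a rank-one bundle and, in a $\tau$-adapted trivialization, is already zero for the obvious horizontal lift) confirms that $\At(Q,\tau)\to T\CC$ is onto, completing the argument.
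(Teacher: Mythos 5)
Your proposal is correct and follows essentially the same route as the paper: the key step in both is the reduction of structure group $Q_{B^-}=\tau^{-1}(0)$, after which $\Gau(Q,\tau)$ and $\Aut(Q,\tau)$ (and, infinitesimally, $\At(Q,\tau)$) are identified with the corresponding objects for the principal $B^-$-bundle $Q_{B^-}$, for which the group bundle description and the two exact sequences are standard (surjectivity onto $\Diff(\CC)$ being automatic since $B^-$ is contractible, so all $B^-$-bundles over a $1$-manifold are trivial). The paper's proof is just a more compressed version of yours; your detour through positive connections and projective structures in the surjectivity step is unnecessary (no connection is part of the data here), but your concrete chart-by-chart alternative already covers it.
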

\begin{proof}
	The developing section defines a reduction of structure group $Q_{B^-}=\tau^{-1}(0)\subset Q$, and the groups 
	$\Gau(Q,\tau)\subset \Aut(Q,\tau)$ are identified with the gauge transformations and automorphisms of 
	$Q_{B^-}$. (Every $B^-$-equivariant diffeomorphism of $Q_{B^-}$ extends uniquely to a $G$-equivariant diffeomorphism of 	$Q$; the latter preserves $\tau$.) In particular, $\At(Q,\tau)$ is just the Atiyah algebroid of $Q_{B^-}$. The sections of 
	$\At(Q,\tau)$ are identified with the $B^-$-invariant vector fields on $Q_{B^-}$, or equivalently with the 
	$G$-invariant vector fields on $Q$ that are $\tau$-related to $0$. Equivalently, this is the kernel of 
	the bundle map to $V_\tau$. 
\end{proof}

\subsection{The group $\Aut(P,\sigma)$}\label{subsec:gauge}
We now give a similar discussion for principal bundles over oriented surfaces $\Sigma$  with boundary. 
Let$(P,\sigma)$ as in \eqref{eq:psigma}. As we saw, $\sigma$ gives a simultaneous description of two reductions of structure group: Over the interior, the structure group of $P$ is reduced to $K$, over $\pS$ it is reduced to $B^-$. 

\begin{proposition}\label{prop:generalstructure}
	The groups of gauge transformations and automorphism of $P$ preserving $\sigma$ fit into an exact sequence 
	\[ 1\to \Gau(P,\sigma)\to \bb\Aut(P,\sigma)\to \bb\Diff(\Sigma)\to 1.\] 
	The group $\Gau(P,\sigma)$ is the group of sections of a family of Lie groups
	 $G(P,\sigma)$, with typical fibers $K$ over interior points 	and $N^-$ at boundary points; restriction to the boundary identifies
		\[ G(P,\sigma)|_{\pS}
	=[G(\p P,\p\sigma),G(\p P,\p\sigma)].\]
\end{proposition}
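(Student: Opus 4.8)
The plan is to analyze the developing section $\sigma$ locally near the boundary, using the fact that $\sigma$ simultaneously encodes two reductions of structure group, and then globalize. First I would recall the normal form from Lemma~\ref{lem:normalform}: near the boundary there is a trivialization $P|_U = U\times G$ in which $\sigma(m,g)=g^{-1}.(i\varrho(m))$. In this trivialization, a gauge transformation is a map $h\colon U\to G$, and preserving $\sigma$ means $h(m)$ stabilizes $i\varrho(m)$ for every $m\in U$. Over the interior, the stabilizer of the point $i\varrho(m)\in \HH$ is a conjugate of $K$ (indeed $\operatorname{diag}(\varrho^{1/2},\varrho^{-1/2})$ conjugates $\operatorname{Stab}(i)=K$ to $\operatorname{Stab}(i\varrho)$), so the fiber of $G(P,\sigma)$ at an interior point is (isomorphic to) $K\cong\SO(2)$. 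At a boundary point $m\in\pS$ we have $\varrho(m)=0$, so $i\varrho(m)=0\in\p\HH$, whose stabilizer in $G=\PSL(2,\R)$ is $B^-$. But we must identify the fiber of $G(P,\sigma)$ at the boundary not as $B^-$ but as $N^-$: the point is that sections of $G(P,\sigma)$ are constrained, they arise as limits of interior sections taking values in conjugates of the compact group $K$, and I would show that the limiting values lie in the unipotent radical $N^-$ rather than all of $B^-$.

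The key local computation: conjugate by $h_\varrho=\operatorname{diag}(\varrho^{1/2},\varrho^{-1/2})$ as in \eqref{eq:singgauge}. An element of $G(P,\sigma)$ over $U-\pS$, written as $h_\varrho^{-1} k\, h_\varrho$ with $k=R(\psi)\in K$, must remain bounded (smooth up to the boundary) as $\varrho\to 0$ for it to define a section of a \emph{regular} group bundle. Writing $R(\psi)$ in matrix form, $h_\varrho^{-1}R(\psi)h_\varrho$ has an entry $\varrho^{-1}\sin\psi$, which stays bounded iff $\sin\psi = O(\varrho)$, i.e. $\psi\to 0$; and then the limit of $h_\varrho^{-1}R(\psi)h_\varrho$ as $\varrho\to 0$ is a lower-triangular unipotent matrix, i.e. lies in $N^-$. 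Conversely any element of $N^-$ arises this way. This shows the fiber of $G(P,\sigma)$ at a boundary point is $N^-$. The identification $G(P,\sigma)|_{\pS}=[G(\p P,\p\sigma),G(\p P,\p\sigma)]$ then follows from the preceding subsection: $G(\p P,\p\sigma)$ has typical fiber $B^-=AN^-$ (Proposition on $\Aut(Q,\tau)$ applied to $Q=\p P$, $\tau=\p\sigma$), and $[B^-,B^-]=N^-$ as noted at the start of Section~5, and the bundle structures are compatible because both are cut out from the constant group bundle with fiber $G$ by the reduction data.

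For the exact sequence $1\to \Gau(P,\sigma)\to \bb\Aut(P,\sigma)\to \bb\Diff(\Sigma)\to 1$, I would argue exactly as in the curve case: an automorphism preserving $\sigma$ covers a boundary-preserving diffeomorphism of $\Sigma$ (it must preserve $\pS$ because $\sigma^{-1}(\p\DD)=\p P$ lies over $\pS$), giving the map to $\bb\Diff(\Sigma)$; its kernel is by definition $\Gau(P,\sigma)$; and surjectivity follows by lifting a diffeomorphism using a connection (or directly from the local normal form, which makes the bundle-with-section locally standard so that diffeomorphisms lift). That $\Gau(P,\sigma)$ consists precisely of sections of the group bundle $G(P,\sigma)$ whose fibers are the pointwise stabilizers is immediate from the definition once we know $G(P,\sigma)$ is a smooth family of Lie groups, which the local normal form provides.

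The main obstacle I anticipate is the boundary degeneration argument: making precise the sense in which a gauge transformation preserving $\sigma$, which over the interior takes values in conjugates $h_\varrho^{-1}Kh_\varrho$ of the compact group, is forced down to $N^-$ at the boundary — i.e. ruling out the ``$A$-part'' of $B^-$. The cleanest way is the explicit matrix computation above showing that only $\psi=O(\varrho)$ gives a bounded (hence regular) family, but one should also phrase it invariantly: $G(P,\sigma)$ is the stabilizer group bundle of the developing map $\sigma$ acting on the associated $\ol{\DD}$-bundle, and regularity of $\sigma$ as a morphism of manifolds with boundary forces the stabilizer at the boundary to be the stabilizer of a \emph{point of $\p\DD$ together with a normal direction}, which is exactly $N^-$ (the subgroup of $B^-$ acting trivially on the normal bundle $\nu(\DD,\p\DD)$ at that point). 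Getting this identification stated and proved cleanly, and matching it with the group bundle $G(\p P,\p\sigma)$ from the previous subsection, is the heart of the argument; the exact sequence and the surjectivity onto $\bb\Diff(\Sigma)$ are then routine.
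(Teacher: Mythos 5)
Your proposal is correct and follows essentially the same route as the paper's proof: work in the normal form of Lemma \ref{lem:normalform}, observe that a $\sigma$-preserving gauge transformation must take values in the stabilizers $G_{i\varrho(m)}$, and show that this smooth family of conjugates of $K$ over the interior degenerates to $N^-$ at $\varrho=0$ --- your computation with $h_\varrho R(\psi)h_\varrho^{-1}$ and $\sin\psi=O(\varrho)$ is exactly the paper's description of the family $G_{iy}$ in \eqref{eq:stab}, and your invariant reformulation (stabilizer of a boundary point together with a normal vector) is a nice complement. The only point to tighten is surjectivity onto $\bb\Diff(\Sigma)$: lifting a diffeomorphism by a connection does not preserve $\sigma$, and the naive lift $(m,g)\mapsto(\Phi(m),g)$ in the normal form also fails because $\Phi$ changes $\varrho$ to $e^{\lambda}\varrho$; one must insert the compensating gauge factor $\diag(e^{-\lambda/2},e^{\lambda/2})$ as in Lemma \ref{lem:normalform2}, a one-line fix within your setup.
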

(A \emph{family of Lie groups} is a Lie groupoid for which source and target map coincide. It need not be locally trivial.)
\begin{proof}
Over the interior of $\Sigma$, 	 the developing section defines a reduction of the structure group to $K\subset G$, 
and the groups 
$\Gau(P|_{\on{int}(\Sigma)},\sigma)\subset \Aut(P|_{\on{int}(\Sigma)},\sigma)$ are identified with the gauge transformations and automorphisms of $P_K$. The main task in proving Proposition \ref{prop:generalstructure} is to understand the behavior near 
the boundary.  We shall use the local normal form, Lemma \ref{lem:normalform}. 
Thus let 
\[U=\partial\Sigma\times [0,\epsilon),\ \ P|_U=U\times G,\]
with $\sigma(m,g)=g^{-1}. (i\varrho(m))$. 
Denote the points of $U$ by $m=(x,y)$, so that $\varrho(m)=y$. 
The automorphisms of $P|_U$ may be written as pairs $(h,\Phi)$, where $\Phi\in \bb\Diff(U)$ and 
$h\in \Gau(P)=C^\infty(U,G)$. Such an automorphism preserves the developing section $\sigma(m,g)=g^{-1}.f(m)$ if and only if 
\[ h(m).f(\Phi^{-1}(m))=f(m).\] 
In our case, $f(x,y)=iy$, we have: 
\begin{lemma}\label{lem:normalform2}
The elements of $\Gau(P|_U,\sigma)$ with compact support in $U$ are of the form 
\[\exp\Big(
\begin{array}{cc} 0 & -\chi \varrho^2\\ \chi & 0
\end{array}\Big)\]
for compactly supported $\chi\in C^\infty(U)$. Every element of 
$\bb\!\Aut(P|_U,\sigma)$ whose base map is compactly supported in  $U$ 
is uniquely the product of such a gauge transformation  and 
an automorphism 
\[ \left(\Big[
\begin{array}{cc} e^{-\lambda/2}& 0\\ 0 & e^{\lambda/2}
\end{array}\Big],\ \Phi\right)\]
where $\Phi\in\bb\Diff(U)$ has compact support in $U$, and $\lambda\in C^\infty(U)$ is the compactly supported function defined by $\Phi_*\varrho=e^\lambda\,\varrho$. 
\end{lemma}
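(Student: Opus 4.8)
The strategy is to make the equation $h(m).f(\Phi^{-1}(m))=f(m)$ completely explicit in the normal-form coordinates, where $f(x,y)=iy$. First I would fix $m=(x,y)$ and write $h(m)=\big[\begin{smallmatrix} p & q\\ r & s\end{smallmatrix}\big]$ (with $ps-qr=1$, up to sign, since we are in $\PSL(2,\R)$); similarly set $\Phi^{-1}(x,y)=(\xi,\eta)$, so the constraint becomes $h(m).(i\eta)=iy$. Expanding the M\"obius action gives $\frac{p\,i\eta+q}{r\,i\eta+s}=iy$, i.e.\ $(p i\eta+q)=iy(r i\eta+s)$. Separating real and imaginary parts yields $q=-y r \eta$ and $p\eta = y s$, hence $p/s=y/\eta$ and $q/s=-yr/s\cdot\eta$. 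Writing $s=e^{\lambda/2}$ for a function $\lambda$, this forces $p=e^{-\lambda/2}\,(y/\eta)\,\dots$; the cleanest route is to recognize that the constraint says precisely that $h(m)$ lies in the coset $B^-\!\cdot(\text{stabilizer adjustments})$, and to use the Iwasawa-type decomposition $h=n\,a$ with $n\in N^-$ lower unipotent and $a\in A$ diagonal. Imposing $n a.(i\eta)=iy$ and using that $A$ scales the imaginary part while $N^-$ (lower unipotent) fixes $0\in\p\HH$ and acts on $i\eta$ by $i\eta\mapsto i\eta/(c\,i\eta+1)$ for $n=\big[\begin{smallmatrix}1&0\\ c&1\end{smallmatrix}\big]$, one sees that the imaginary part of $n.(i\eta)$ equals $\eta/(c^2\eta^2+1)$, which is $\le\eta$, while the real part is $-c\eta^2/(c^2\eta^2+1)$.

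Next, since we want $\Phi$ to preserve the boundary and the base map to be compactly supported in $U$, we have $\eta=\Phi^{-1}_*\!$ applied to $y$, and the condition $\Phi_*\varrho=e^{\lambda}\varrho$ means exactly $y=e^{\lambda(\Phi^{-1}(m))}\eta$ — equivalently (after matching our $\lambda$ conventions) $\eta=e^{-\lambda}y+O(y^2)$ along the boundary; I would absorb the higher-order discrepancy into the gauge-transformation factor. The key computation is then: given $(\Phi,\lambda)$ with $\Phi_*\varrho=e^\lambda\varrho$, the diagonal automorphism $\big(\big[\begin{smallmatrix}e^{-\lambda/2}&0\\0&e^{\lambda/2}\end{smallmatrix}\big],\Phi\big)$ satisfies the defining equation modulo an element of $\Gau(P|_U,\sigma)$, and that residual gauge transformation is lower unipotent, i.e.\ of the form $\exp\big(\begin{smallmatrix}0&-\chi\varrho^2\\ \chi&0\end{smallmatrix}\big)$. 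To see the latter, I compute $\exp\big(\begin{smallmatrix}0&-\chi y^2\\ \chi&0\end{smallmatrix}\big).(iy)$: the matrix $\big[\begin{smallmatrix}\cos(\chi y)&-y\sin(\chi y)\\ \frac{1}{y}\sin(\chi y)&\cos(\chi y)\end{smallmatrix}\big]$ (using that the square of the generator is $-\chi^2 y^2\,\mathrm{Id}$) sends $iy\mapsto \frac{iy\cos(\chi y)-y\sin(\chi y)}{\frac{i}{y}\sin(\chi y)\cdot y+\cos(\chi y)}\cdot$, and a short simplification shows this fixes $iy$ exactly (numerator $=iy\cdot$ denominator), so these are precisely the $\sigma$-preserving gauge transformations over $U$; conversely any $\sigma$-preserving gauge transformation $h(m)$ must fix $f(m)=iy$, hence lies in the stabilizer of $iy$ in $G$, which is the conjugate $\big[\begin{smallmatrix}y^{1/2}&0\\0&y^{-1/2}\end{smallmatrix}\big]K\big[\begin{smallmatrix}y^{-1/2}&0\\0&y^{1/2}\end{smallmatrix}\big]$ of $K$ — and these conjugates are exactly the image of the exponential above. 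Matching the generator of $K$ (which is $\big[\begin{smallmatrix}0&-1\\1&0\end{smallmatrix}\big]$ up to the half-angle identification \eqref{eq:identification}) under this conjugation gives the stated form with $\varrho^2$ in the upper-right entry.

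For the uniqueness and the product decomposition: given $(h,\Phi)\in\bb\!\Aut(P|_U,\sigma)$ with $\Phi$ compactly supported in $U$, define $\lambda$ by $\Phi_*\varrho=e^\lambda\varrho$ and factor out the diagonal automorphism $\big(\big[\begin{smallmatrix}e^{-\lambda/2}&0\\0&e^{\lambda/2}\end{smallmatrix}\big],\Phi\big)$. What remains is a pair $(h',\mathrm{id})$ — i.e.\ a gauge transformation — which still preserves $\sigma$; by the first part it has the stated exponential form, with $\chi$ compactly supported because $h$ and the diagonal factor agree with the identity outside a compact set. Uniqueness is immediate: if a product of a lower-unipotent gauge transformation and a diagonal automorphism over $\Phi$ equals the identity, then comparing base maps forces $\Phi=\mathrm{id}$, comparing the $A$-parts forces $\lambda=0$, and then the gauge part forces $\chi=0$. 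The compact-support bookkeeping near $\p U=\{y=\epsilon\}$ is routine given that everything is assumed compactly supported in the open collar.

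\textbf{Main obstacle.} The genuine content is the normal-form computation identifying the stabilizer of $iy\in\p\HH\cup\{\infty\}$ inside $\PSL(2,\R)$ with the one-parameter family $\exp\big(\begin{smallmatrix}0&-\chi y^2\\ \chi&0\end{smallmatrix}\big)$, and in particular getting the factor $\varrho^2$ and the sign conventions right — this is where the half-angle identification \eqref{eq:identification}, the choice $\ol{\DD}\cong\ol{\HH}\cup\{\infty\}$ with $0\leftrightarrow -i$, and the orientation of the coframe all have to be consistent with the earlier sections. The rest (separating real/imaginary parts, the Iwasawa $N^-A$ factorization, and the support arguments) is bookkeeping. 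I would carry out the stabilizer computation first in bare $\SL(2,\R)$-matrices, then project.
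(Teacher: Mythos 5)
Your proposal is correct and follows essentially the same route as the paper: identify the pointwise stabilizer $G_{iy}\subset\PSL(2,\R)$ of $iy\in\ol{\HH}$ as the one-parameter family $\exp\bigl(\begin{smallmatrix}0&-ty^{2}\\ t&0\end{smallmatrix}\bigr)$ (your exponential computation is right), and factor an arbitrary $\sigma$-preserving automorphism over $\Phi$ as the diagonal lift determined by $\Phi_*\varrho=e^{\lambda}\varrho$ times a residual element of $\Gau(P|_U,\sigma)$. Two small points to tighten: with the paper's convention the relation $\varrho(\Phi^{-1}(m))=e^{\lambda(m)}\varrho(m)$ is exact, so the diagonal automorphism preserves $\sigma$ on the nose and there is no $O(y^{2})$ discrepancy to absorb into the gauge factor; and at $y=0$ the pointwise stabilizer of $0\in\p\HH$ is all of $B^-$, so one needs continuity of $h$ from $y>0$ (where the stabilizers are the conjugates of $K$) to conclude that the boundary value lies in $N^-$ and that $\chi$ extends smoothly to the boundary.
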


\begin{proof}
A gauge transformation $h\in C^\infty(U,G)$ fixes $\sigma$ if and only if $h(x,y)\in G_{iy}$ for all $y$. 
For $y>0$,  the stabilizer of $iy\in \HH$ in $G$ is 
\begin{equation}\label{eq:stab} 
G_{iy}=\left\{\exp\Big(\begin{array}{cc} 0 & -t\,y^2\\ t & 0
\end{array}\Big)\Big|\ t\in\R\right\}\cong K\end{equation}
%
This fits uniquely into a smooth family of subgroups of $\{(x,y)\}\times G$ for all $y\ge 0$, by taking the fiber for $y=0$ to be 
$N^-$. A smooth gauge transformation fixing $\sigma$ must take values in this family of Lie groups. 
Consider next a compactly supported diffeomorphism $\Phi\in \bb\Diff_+(U)$. The push-forward $\Phi_*\varrho=\varrho\circ \Phi^{-1}$ is again a boundary defining function, and so is of the form $e^\lambda\varrho$. The hyperbolic transformation 
given by the diagonal matrix with entries $e^{-\lambda(m)/2},e^{\lambda(m)/2}$ down the diagonal takes 
$f(\Phi^{-1}(m))=i\,e^{\lambda(m)}  y$ back to $iy$. 
\end{proof}
The Lemma (and its proof) verify that $\Gau(P,\sigma)$ are the sections of a family of Lie groups $G(P,\sigma)$. It also shows that
every diffeomorphism in $\bb\Diff(\Sigma)$ lifts to an automorphism in 
$\bb\Aut(P,\sigma)\to \bb\Diff(\Sigma)$: For diffeomorphisms  
supported in the interior of $\Sigma$ this is done by lifting to an automorphisms of $P_K\subset P_{\on{int}(\Sigma)}$; for 
diffeomorphisms supported in a collar neighborhood of the boundary the Lemma gives an explicit lift. 
\end{proof}

We see in particular that the restriction map $\Gau(P,\sigma)\to \Gau(\p P,\p\sigma)$ is \emph{not} surjective. On the other hand, we have: 
\begin{proposition}\label{prop:restrictionsurjective}
	The restriction map 
	\[ \bb\!\!\Aut_\oz(P,\sigma)\to \Aut_\oz(\p P,\p\sigma)\] 
	is surjective. In fact, every 
	element of $ \Aut_\oz(\p P,\p\sigma)$ admits an extension to an element of $\bb\!\Aut_\oz(P,\sigma)$ which is supported on a collar neighborhood of the boundary. 
\end{proposition}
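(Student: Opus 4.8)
The plan is to reduce the statement to a local problem near the boundary, where the explicit description of automorphisms in Lemma~\ref{lem:normalform2} can be applied directly. Given an element $\psi\in\Aut_\oz(\p P,\p\sigma)$, I first use the fact that $\psi$ is in the identity component to write it (or at least to reduce to the case where it is) accessible by the collar construction; more precisely, I would work with the exponential-type description coming from the exact sequences in the previous section, so that $\psi$ is determined by its base diffeomorphism $\phi\in\Diff_\oz(\pS)$ together with a section of the family of Lie groups $G(\p P,\p\sigma)\to\pS$ (with typical fiber $B^-=AN^-$). The goal is to produce $\wt\psi\in\bb\!\Aut_\oz(P,\sigma)$ restricting to $\psi$, supported in a collar $U=\pS\times[0,\epsilon)$.

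The main step is the extension over the collar. Using the normal form of Lemma~\ref{lem:normalform}, trivialize $P|_U=U\times G$ so that $\sigma(m,g)=g^{-1}.(i\varrho(m))$, with $m=(x,y)$ and $\varrho(m)=y$. On the boundary $y=0$, the section $\psi$ consists of a pair: a diffeomorphism $\phi$ of $\pS$, and a gauge part $h_0\colon\pS\to B^-=AN^-$. I would extend $\phi$ to a compactly supported diffeomorphism $\Phi\in\bb\Diff_\oz(U)$ of the collar (e.g.\ $\Phi(x,y)=(\phi_{\chi(y)}(x),y)$ using an isotopy $\phi_t$ from the identity to $\phi$ and a cutoff $\chi$ with $\chi(0)=1$), and extend $h_0$ to a section $h\colon U\to G(P,\sigma)$ of the family of Lie groups, matching the $A$-factor of $h_0$ at $y=0$ with the scaling $\left[\begin{smallmatrix}e^{-\lambda/2}&0\\0&e^{\lambda/2}\end{smallmatrix}\right]$ forced by $\Phi_*\varrho=e^\lambda\varrho$ (so these are consistent, since the $A$-part of $\Aut(\p P,\p\sigma)$ is exactly what rescales the boundary defining function), and extending the $N^-$-factor via the exponential formula $\exp\left(\begin{smallmatrix}0&-\chi\varrho^2\\ \chi&0\end{smallmatrix}\right)$ from Lemma~\ref{lem:normalform2} with $\chi$ chosen to have the correct boundary value and compact support in $U$. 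By Lemma~\ref{lem:normalform2}, the resulting pair $(h,\Phi)$ is a genuine element of $\bb\!\Aut(P|_U,\sigma)$; it is supported in the collar, restricts to $\psi$ on $\pS$, and because $\Phi$ is isotopic to the identity through compactly supported diffeomorphisms and $h$ is built from exponentials, the extension lies in the identity component $\bb\!\Aut_\oz(P,\sigma)$. Extending by the identity outside the collar gives the desired global automorphism.

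The part requiring the most care is verifying that the extension genuinely lies in the \emph{identity component} $\bb\!\Aut_\oz(P,\sigma)$, rather than merely in $\bb\!\Aut(P,\sigma)$: one must check that the ambiguity in choosing the isotopy $\phi_t$ and the fact that $\psi\in\Aut_\oz$ (not just $\Aut_+$) are enough to guarantee this, using that $B^-$ and hence the fibers of $G(P,\sigma)$ are contractible, and that $\Diff_\oz(\pS)$ is connected. A second point to check carefully is the compatibility along $y=0$ between the diffeomorphism factor and the gauge factor, i.e.\ that the $A$-component of $h_0$ is precisely the one dictated by $\Phi$ via $\Phi_*\varrho=e^\lambda\varrho$; this is automatic from the structure of $\Aut(\p P,\p\sigma)$ as an extension of $\Diff(\pS)$ by sections of $G(\p P,\p\sigma)$, but it should be spelled out so that the extended $(h,\Phi)$ really does restrict to the given $\psi$ and not to some twist of it. Once these are in place, the statement follows, and as a byproduct one obtains the surjectivity of $\bb\!\Aut_\oz(P,\sigma)\to\Aut_\oz(\p P,\p\sigma)$ claimed in the proposition.
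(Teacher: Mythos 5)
Your proof follows the same route as the paper's own: reduce to a collar neighborhood via the normal form of Lemma~\ref{lem:normalform}, extend the base diffeomorphism to a compactly supported diffeomorphism of the collar, and apply Lemma~\ref{lem:normalform2} to lift it, with the $A$-part of the boundary gauge factor absorbed into the choice of normal component of the extension (via $\Phi_*\varrho=e^\lambda\varrho$) and the $N^-$-part extended by the exponential formula. The paper's proof is terser but identical in substance, and your more detailed treatment of the gauge factor and of the identity-component issue is correct.
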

\begin{proof}
	We work with the normal form (Lemma \ref{lem:normalform}) over a collar neighborhood $U$ of the boundary.  	
	Given an element of $\Aut_\oz(\p P,\p\sigma)$, with base map $\p\Phi\in \Diff_\oz(\p\Sigma)$, choose an extension to a diffeomorphism $\Phi$ with compact support on $U$. Lemma \ref{lem:normalform2} shows how to lift 
	$\Phi$ to an element of $\bb\!\Aut_\oz(P,\sigma)$, extending the given automorphism along the boundary. 
\end{proof}

For the sake of completeness, we also give the infinitesimal descriptions. Recall that the $b$-tangent bundle $\bb TM$ 
of a manifold $M$ with boundary is the vector bundle whose sections are vector fields tangent to the boundary $\p M$. 

\begin{proposition}
	The kernel of bundle map $\At(P)\to V_\sigma$ is a Lie subalgebroid $\bb\At(P,\sigma)$ of the Atiyah algebroid, with
	$\bb\aut(P,\sigma)$ as its space of sections. It fits into an exact sequence of Lie algebroids
	\begin{equation}\label{eq:batiyahalgebroid}
	0\lra \g(P,\sigma)\lra \At(P,\sigma)\lra \bb T\Sigma\lra 0.\end{equation}
	Restriction to the boundary is a  Lie algebroid isomorphism 
	\begin{equation}\label{eq:surj2} \At(P,\sigma)|_{\p\Sigma}\cong \At(\p P,\p\sigma)\end{equation}
	inducing the inclusion $\g(P,\sigma)|_{\pS}\hra \g(\p P,\p\sigma)$. 
\end{proposition}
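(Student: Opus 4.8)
The plan is to mirror the structure of the proof of Proposition~\ref{prop:generalstructure}, passing from the global statement to the local normal form near the boundary, and then reading off the Lie algebroid data directly. First I would recall that $\aut(P,\sigma) = \Gamma(\At(P,\sigma))$ by definition, where $\At(P,\sigma)\subset \At(P)$ is the preimage (kernel) of the bundle map $\At(P)\to V_\sigma$ induced by $T\sigma$; one checks that this is a vector subbundle by a rank count. Over the interior, $\At(P)\to V_\sigma$ is surjective (this is the content of the interior positivity picture: $V_\sigma|_{\on{int}(\Sigma)}=\k^\perp(P_K)$ and the map is the "symmetric part" projection), so the kernel has rank $\dim\g - 2 = 1$, plus the base direction, giving a rank-$3$ bundle mapping onto $T\Sigma$; over the interior this recovers $\At(P_K)$. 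The subtlety is at the boundary, where the map $\At(P)\to V_\sigma$ has a one-dimensional image (only the normal direction), so a naive kernel would jump in rank; the point, exactly as in Lemma~\ref{lem:normalform2}, is that the correct object $\bb\At(P,\sigma)$ uses the $b$-tangent bundle in the base, and the fiberwise rank stays constant. I would verify subbundle-ness in the local trivialization of Lemma~\ref{lem:normalform}, where $\sigma(m,g)=g^{-1}.(i\varrho(m))$ and the relevant sections of $\At(P)|_U = TU \oplus \g$ preserving $\sigma$ are computed just as in the proof of Lemma~\ref{lem:normalform2}: the vector-field part must be tangent to the boundary (hence lies in $\bb TU$), and the $\g$-part lies in the Lie algebra $\g(P,\sigma)$ of the family of groups $G(P,\sigma)$, which degenerates from $\k$ (interior) to $\n^-$ (boundary).

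Next I would establish that $\bb\At(P,\sigma)$ is closed under the Lie algebroid bracket. Since $\At(P,\sigma)$ is the set of $G$-invariant vector fields on $P$ that are $\sigma$-related to $0\in\Gamma(\sigma^*T\ol{\DD})$ — equivalently, vector fields $X$ on $P$ with $T\sigma(X)=0$ — the bracket of two such is again $\sigma$-related to $[0,0]=0$ by naturality of $\sigma$-relatedness under Lie brackets. Tangency to the boundary is preserved under brackets of vector fields, so the $b$-tangency is automatic. This gives the Lie subalgebroid claim and the exact sequence \eqref{eq:batiyahalgebroid}: the anchor of $\bb\At(P,\sigma)$ lands in $\bb T\Sigma$ by the tangency just noted, it is surjective onto $\bb T\Sigma$ because (again by the local normal form and Lemma~\ref{lem:normalform2}, infinitesimalized) every boundary-tangent vector field on $\Sigma$ lifts to an infinitesimal automorphism preserving $\sigma$, and its kernel is exactly $\g(P,\sigma)$ — the sections of the family of Lie algebras with fiber $\k$ over interior points and $\n^-$ on the boundary. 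The fact that $\g(P,\sigma)$ is a vector bundle (hence the sequence is an honest exact sequence of Lie algebroids) follows from the smooth family-of-subgroups statement in Proposition~\ref{prop:generalstructure}.

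For the boundary restriction isomorphism \eqref{eq:surj2}, I would argue as follows. A section of $\At(P,\sigma)$ restricts, along $\pS$, to a $G$-invariant vector field on $\p P$ that is $\p\sigma$-related to $0$, i.e. to a section of $\At(\p P,\p\sigma)$; this defines the map. To see it is an isomorphism, compare ranks: $\At(\p P,\p\sigma)$ is the Atiyah algebroid of the $B^-$-reduction $(\p P)_{B^-}=\tau^{-1}(0)$, hence has rank $\dim\pS + \dim B^- = 1 + 2 = 3$; and $\At(P,\sigma)|_{\pS}$ also has rank $3$ (rank $1$ of $\bb T\Sigma|_{\pS} = T\Sigma|_{\pS}$ plus rank $2$ of the $\g$-part, namely $\b^- = \a\oplus\n^-$, which is the fiber of $\At(P,\sigma)|_{\pS}$ over boundary points — note this is where the $b$-tangent bundle is crucial, since $\bb T\Sigma|_{\pS}\cong T\Sigma|_{\pS}$ has full rank $2$ even though the inward normal direction is "used up" differently). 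One checks injectivity by unwinding the local normal form of Lemma~\ref{lem:normalform2}: an infinitesimal automorphism restricting to $0$ on $\pS$, with base map compactly supported near and tangent to $\pS$, must already be $O(\varrho)$ in a way that forces it to vanish to the order needed — more precisely, the diagonal part $\lambda$ and the $\n^-$-part $\chi$ both vanish on $\pS$ when the restriction does, but the relation $\varrho=u^2\on{Im}(f)$ type bookkeeping shows the section itself is then determined. Surjectivity is the infinitesimal form of Proposition~\ref{prop:restrictionsurjective}. Finally, that this isomorphism intertwines the brackets is immediate since both sides compute brackets of vector fields on (sub)bundles and restriction to $\pS$ is a Lie algebra morphism; and it carries $\g(P,\sigma)|_{\pS}$ into $\g(\p P,\p\sigma)$ because the fiber $\b^-$ on the left maps to $\b^-$ on the right, which is exactly the inclusion of Lie algebras $\b^-\hra\b^-$ — wait, more carefully, $\g(P,\sigma)|_{\pS}$ has fiber $\n^-$ (not $\b^-$), which includes into the fiber $\b^-=\a\oplus\n^-$ of $\g(\p P,\p\sigma)$; this is the inclusion $\g(P,\sigma)|_{\pS}\hra\g(\p P,\p\sigma)$ claimed, and it is strict, with the missing $\a$-direction accounting for the normal-derivative degree of freedom that got absorbed into $\bb T\Sigma$ on the surface side.

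\textbf{Main obstacle.} The delicate point is the rank/constancy argument at the boundary: showing that $\bb\At(P,\sigma)$ is a genuine subbundle (constant rank) across $\pS$ despite the bundle map $\At(P)\to V_\sigma$ dropping rank there, and correctly tracking how the "vertical" $\a$-direction on the surface side is traded for a "horizontal" normal direction in $\bb T\Sigma$ — this is precisely why the $b$-tangent bundle, rather than $T\Sigma$, appears in \eqref{eq:batiyahalgebroid}, and getting the bookkeeping right in the normal form of Lemma~\ref{lem:normalform2} (the factors of $\varrho$ and $\varrho^2$ in the exponentials) is the crux. Everything else is a routine transport of the finite-dimensional statements in Proposition~\ref{prop:generalstructure} and Proposition~\ref{prop:restrictionsurjective} to the infinitesimal level.
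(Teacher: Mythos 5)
Your overall strategy is the paper's: localize to the boundary normal form of Lemma \ref{lem:normalform}, compute the bundle map $\At(P)\to V_\sigma$ explicitly there, and import the infinitesimal content of Lemma \ref{lem:normalform2} and Proposition \ref{prop:restrictionsurjective}. However, the step you single out as the ``main obstacle'' rests on a false premise. You claim that at boundary points the map $\At(P)\to V_\sigma$ has one-dimensional image (``only the normal direction''), so that the naive kernel would jump in rank and must be repaired by passing to the $b$-tangent bundle. In fact the map stays surjective at the boundary: in the normal form $\sigma(m,g)=g^{-1}. f(m)$ with $f(x,y)=iy$, the bundle map is $(v,\xi)\mapsto (T_mf)(v)-\xi^\sharp|_{f(m)}$, and at $f(m)=0\in\p\HH$ the first summand sweeps out the normal direction of $T_0\ol{\HH}$ (because $f$ is a boundary defining map) while the second sweeps out $T_0\,\p\HH$ (because $G$ acts transitively on $\p\DD$). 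Hence the kernel is an honest subbundle of constant rank $3$ throughout, and no correction is needed. What changes at the boundary is not the rank of the kernel but its position inside $T\Sigma\oplus\g$: since $\xi^\sharp$ is tangent to $\p\HH$, the two summands can only cancel if both vanish, so the anchor image collapses from $T_m\Sigma$ to $T_m\pS$ while the vertical part grows from $\k$ to $\mf{b}^-$. This --- not a rank jump --- is why $\bb T\Sigma$ replaces $T\Sigma$ in \eqref{eq:batiyahalgebroid}, and it is exactly the observation the paper's proof makes.

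The residual confusion propagates into your rank bookkeeping for \eqref{eq:surj2} (``rank $1$ of $\bb T\Sigma|_{\pS}=T\Sigma|_{\pS}$'': both bundles have rank $2$, and they are not equal --- the natural map $\bb T\Sigma\to T\Sigma$ has one-dimensional image along $\pS$). Your final fiber identification is nevertheless correct: at $m\in\pS$ the fiber of $\At(P,\sigma)$ is $T_m\pS\oplus\mf{b}^-$, which coincides tautologically with the fiber of $\At(\p P,\p\sigma)$ under restriction to $\p P$ (so the elaborate injectivity argument via $\varrho=u^2\on{Im}(f)$ is unnecessary; rank count plus surjectivity, or the explicit identification, already gives the isomorphism), and $\g(P,\sigma)|_{\pS}$ has fiber $\mf{n}^-\subset\mf{b}^-$, with the complementary diagonal direction recording the $\varrho\,\p_\varrho$-component in $\bb T\Sigma$. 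The bracket-closure argument via $\sigma$-relatedness is fine. I would rewrite the proposal to delete the rank-jump narrative and state the constant-rank computation of the kernel directly.
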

\begin{proof}
Over the the interior, 	the kernel of $\At(P)\to V_\sigma$ is the subalgebroid $\At(P_K)$ given by the reduction of structure group 
$P_K\subset P|_{\on{int}(\Sigma)}$. Hence, it suffices to study the situation near the boundary. Using the normal form from Lemma \ref{lem:normalform}, we have $P|_U=U\times G$, with $\sigma(m,g)=g^{-1}.f(m)$ for $f(x,y)=iy$.  
The bundle map $\At(P|_U)=TU\times \g\to V_\sigma|_U=f^*T\ol{\HH}$ is given by 
\[ (v,\xi)\mapsto (T_mf)(v)-\xi^\sharp|_{f(m)},\] 
where  $\xi^\sharp$ is the vector field on $\ol{\HH}$ generated by $X$. Since $\xi^\sharp$ is tangent to $\p \HH$, we see that 
for elements $(v,\xi)\in \At(P)|_m,\ m\in\pS$ in the kernel of  the map to $V_\sigma$, the vector $v$ must itself be tangent to $\p\Sigma$. The rest of the discussion is 
as in the proof of Lemma \ref{lem:normalform2}. In particular, infinitesimal gauge transformations are given by functions 
\[ \chi\left(\begin{array}{cc} 0 & -\varrho^2\\ 1 & 0
\end{array}\right)\in C^\infty(U,\g)=\gau(P|_U).\]
Every compactly supported vector field $v\in \bb\Vect(U)$ tangent to $\p\Sigma$ defines a function 
$\lambda=\varrho^{-1}\L_v \varrho$, and the element 
	\[ \left(\Big(
	\begin{array}{cc} -\lambda/2& 0\\ 0 & \lambda/2
	\end{array}\Big),\ v\right)\in C^\infty(U,\g)\times \Vect(U)= \aut(P|_U)
	\]
	is a lift to $\aut(P|_U,\sigma)$. 
\end{proof}

\begin{remark}\label{rem:modelcase}	For the model case $\Sigma=\ol{\DD}$, 
	the Lie algebroid 
	$\At(P,\sigma)$ is identified with the action Lie algebroid $\ol{\DD}\times \g$ , embedded in 
	$\At(P)=T\ol{\DD}\times \g$ by the map $(z,\xi)\mapsto (\xi^\sharp(z),\xi)$. 
\end{remark}

\section{Symplectic structure on $\on{Teich}(\Sigma)$}
We shall now construct a symplectic structure $\omega$ on the Teichm\"uller spaces of hyperbolic structures
on surfaces $\Sigma$ with boundary.   
Throughout, $\Sigma$ will be compact, connected, and oriented, with a given pair $(P,\sigma)$ as in \eqref{eq:psigma}, 
satisfying $\ez(P,\sigma)=\chi(\Sigma)$. 
By Proposition \ref{prop:eulerorelse}, 
the space  $\A^{\on{pos}}(P)$ of $\sigma$-positive connections is non-empty.

\subsection{Hyperbolic metrics from positive connections}
Our starting point is the following description of $\on{Hyp}(\Sigma),\ \on{Teich}(\Sigma)$ as quotients of spaces of flat connections.

\begin{theorem}\label{tr:hypasquotient}
	The space of hyperbolic structures on $\Sigma$ is a quotient, 
	\begin{equation}
	\on{Hyp}(\Sigma)=\A_{\on{flat}}^{\on{pos}}(P)/\Gau(P,\sigma).\end{equation}
	Taking a further quotient by the action of $\zz\Diff_\oz(\Sigma)$, we obtain
	\begin{eqnarray}
	\on{Teich}(\Sigma)&=\A_{\on{flat}}^{\on{pos}}(P)/\zz\!\!\Aut_\oz(P,\sigma).\label{eq:teichred}
	\end{eqnarray}
\end{theorem}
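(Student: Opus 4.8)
The statement has two parts. The first, $\on{Hyp}(\Sigma)=\A^{\on{pos}}_{\on{flat}}(P)/\Gau(P,\sigma)$, is essentially assembled from the results already in place. Proposition~\ref{prop:flat=hyperbolic} gives a well-defined map $\A^{\on{pos}}_{\on{flat}}(P)\to \on{Hyp}(\Sigma)$, sending a $\sigma$-positive flat connection $\theta$ to the hyperbolic structure obtained by trivializing $\theta$ locally and reading off the charts $\phi_\alpha\colon U_\alpha\to\ol{\DD}$ from $\sigma(m,g)=g^{-1}.\phi_\alpha(m)$. The plan is: (i) show this map is \emph{surjective} --- given a hyperbolic structure, the construction at the start of Section~4 (``Flat bundles from hyperbolic structures'') produces a triple $(P',\sigma',\theta')$ with $\theta'$ flat and $\sigma'$-positive; by Proposition~\ref{prop:quantumnumber} (using $\ez(P',\sigma')=\chi(\Sigma)=\ez(P,\sigma)$) there is an isomorphism $P\cong P'$ carrying $\sigma$ to $\sigma'$, and transporting $\theta'$ back gives the required preimage; (ii) show two $\sigma$-positive flat connections give the \emph{same} hyperbolic structure iff they differ by an element of $\Gau(P,\sigma)$. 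For (ii), ``if'' is immediate since gauge transformations preserving $\sigma$ just change the local trivializations compatibly with the $G$-valued transition data, leaving the charts unchanged up to the $G$-action, i.e.\ leaving the hyperbolic structure unchanged. For ``only if'', if $\theta_0,\theta_1$ induce the same hyperbolic structure one builds, locally, a gauge transformation intertwining them (parallel transport on each side, compared through the common charts); the local pieces agree on overlaps precisely because they must send $\sigma$ to $\sigma$, and patch to a global element of $\Gau(P,\sigma)$.

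For the second part, \eqref{eq:teichred}, one takes the further quotient by $\zz\Diff_\oz(\Sigma)$. The key point is that the action of $\zz\Diff_\oz(\Sigma)$ on $\on{Hyp}(\Sigma)$ lifts through the quotient $\A^{\on{pos}}_{\on{flat}}(P)/\Gau(P,\sigma)$ to an action on $\A^{\on{pos}}_{\on{flat}}(P)$ \emph{modulo $\Gau(P,\sigma)$}, realized by the group $\zz\Aut_\oz(P,\sigma)$. Concretely: every $\varphi\in\zz\Diff_\oz(\Sigma)$ lifts to an automorphism in $\zz\Aut_\oz(P,\sigma)$ (this uses the lifting statement contained in Proposition~\ref{prop:generalstructure}, adapted to the subgroup fixing the boundary pointwise and isotopic to the identity --- for such $\varphi$ one may take a lift that is the identity near the boundary, using the interior reduction $P_K$), and conversely $\zz\Aut_\oz(P,\sigma)$ is generated by $\Gau(P,\sigma)$ together with such lifts, fitting into the exact sequence $1\to\Gau(P,\sigma)\to\zz\Aut_\oz(P,\sigma)\to\zz\Diff_\oz(\Sigma)\to 1$. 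Granting this, the iterated quotient collapses:
\[
\A^{\on{pos}}_{\on{flat}}(P)/\zz\Aut_\oz(P,\sigma)=\big(\A^{\on{pos}}_{\on{flat}}(P)/\Gau(P,\sigma)\big)/\zz\Diff_\oz(\Sigma)=\on{Hyp}(\Sigma)/\zz\Diff_\oz(\Sigma)=\on{Teich}(\Sigma).
\]
So the steps are: (a) identify $\zz\Aut_\oz(P,\sigma)$ as the relevant extension; (b) check its action on $\A^{\on{pos}}_{\on{flat}}(P)$ descends, modulo $\Gau(P,\sigma)$, to the $\zz\Diff_\oz(\Sigma)$-action on $\on{Hyp}(\Sigma)$; (c) conclude by the two-step quotient identity above.

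\textbf{Main obstacle.} The delicate point is the behavior at the boundary in part (ii) of the first statement and in step (a): one must ensure that the locally-constructed intertwining gauge transformations, or the lifts of diffeomorphisms, \emph{extend smoothly up to the ideal boundary} as genuine (regular) bundle automorphisms preserving $\sigma$ --- not merely as the singular gauge transformations of the coframe picture \eqref{eq:singgauge}. This is exactly the content that was isolated in Lemma~\ref{lem:normalform} and Lemma~\ref{lem:normalform2}: working in the normal form $P|_U=U\times G$, $\sigma(m,g)=g^{-1}.(i\varrho(m))$, the allowed gauge transformations are the $\exp\!\big(\begin{smallmatrix}0&-\chi\varrho^2\\ \chi&0\end{smallmatrix}\big)$, and any automorphism lifting a boundary-fixing diffeomorphism can be written explicitly in these terms. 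So the proof of \eqref{eq:teichred} is mostly bookkeeping around those two lemmas; the conceptual work has already been done, and what remains is to verify that the patching of local data is compatible with the $\sigma$-preserving condition globally and that $\zz\Diff_\oz(\Sigma)$ (as opposed to all of $\bb\Diff(\Sigma)$) lifts into the identity component $\zz\Aut_\oz(P,\sigma)$.
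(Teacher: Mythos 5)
Your proposal is correct and follows essentially the same route as the paper: the forward map via Proposition~\ref{prop:flat=hyperbolic}, surjectivity via the triple $(P',\sigma',\theta')$ associated to a hyperbolic structure together with Proposition~\ref{prop:quantumnumber}, uniqueness up to $\Gau(P,\sigma)$, and then the two-step quotient through the extension $1\to\Gau(P,\sigma)\to\zz\Aut_\oz(P,\sigma)\to\zz\Diff_\oz(\Sigma)\to 1$. The paper's own proof is in fact terser (it dismisses the second part as ``clear''), so your elaboration of the injectivity argument and of the lifting of boundary-fixing diffeomorphisms is a faithful, more detailed account of the same argument rather than a different one.
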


\begin{proof}
	By Proposition \ref{prop:flat=hyperbolic}, every $\theta\in \A^{\on{pos}}_{\on{flat}}(P)$ determines a hyperbolic 0-metric $\gz$ on $\Sigma$; changing $\theta$ by a gauge transformation in $\Gau(P,\sigma)$ does not change $\gz$.  
	Conversely, every hyperbolic 0-metric on $\Sigma$ arises in this way, where $\theta$ is unique up to the action of $\Gau(P,\sigma)$. 
	Indeed, $\gz$ determines a triple $(P',\sigma',\theta)$, with $\ez(P',\sigma')=\chi(\Sigma)$; 	hence $(P',\sigma')$ is related to $(P,\sigma)$ by a bundle isomorphism.  It follows that $\gz$ is defined by a flat connection  $\theta\in \A^{\on{pos}}_{\on{flat}}(P)$ (the image of $\theta'$ under this isomorphism). This proves the description of $\on{Hyp}(\Sigma)$; the remaining assertions are clear. 
\end{proof}

Similarly, given $(Q,\tau)$ as in \eqref{eq:qtau}, we saw that every projective structure on an oriented 1-manifold $\CC$ is obtained from a $\tau$-positive connection on $Q$. 
The latter is unique up to the action of $\Gau(Q,\tau)$.  Hence 
\[ \on{Proj}(\CC)=\A^{\on{pos}}(Q)/\Gau(Q,\tau).\]
The quotient map intertwines the action of 
$\Aut_+(Q,\tau)$ on connections with the action of $\on{Diff}_+(\CC)=\Aut_+(Q,\tau)/\Gau(Q,\tau)$ on projective structures. 
Letting $Q=\p P$ and $\tau=\p\sigma$, we see that the pullback map $\A^{\on{pos}}_{\on{flat}}(P)\to 
\A^{\on{pos}}(\p P)$ descends to the natural map $	\on{Teich}(\Sigma)\to \on{Proj}(\pS)$.

\subsection{Atiyah-Bott symplectic structure}
Let $\cdot$ denote the nondegenerate invariant symmetric bilinear form (`metric')  on $\g=\mf{sl}(2,\R)$ given by 
\begin{equation} \xi\cdot \eta=\on{tr}(\xi\eta).\end{equation} 
It determines a bundle metric on $\g(P)$, and an Atiyah-Bott symplectic structure on the
space $\A(P)$ of connections by 
\[ \omega_{AB}(a,b)=\int_\Sigma a\stackrel{.}{\wedge} b,\] for 
$a,b\in T_\theta\A(P)=\Omega^1(\Sigma,\g(P))$. 
The symplectic structure is invariant under the action of $\bb\!\Aut_+(P)$, and there is a natural affine moment map 
for this action, involving both a bulk term and a boundary term. We refer to Appendix \ref{app:AB} for details. 

For now, we consider the subgroup  $\zz\!\Aut_+(P)$ of automorphisms 
whose base map fixes the boundary. The moment map for this subgroup is given by 
\begin{equation} \A(P)\to \Omega^2(\Sigma,\At(P)^*)\times \A(\p P),\  \ \theta\mapsto 
\big(-F^\theta\cdot s^\theta,\  \p\theta\big).
\end{equation}
Here $F^\theta\in \Omega^2(\Sigma,\g(P))$ is the curvature, and  
\begin{equation}\label{eq:stheta}
 s^\theta\colon \on{At}(P)=TP/G\to \g(P)\end{equation}
is the splitting (`vertical projection') defined by connection $\theta\in \A(P)$.

 One obtains an affine moment map for the action of $\zz\!\Aut_+(P,\sigma)\subset \zz\!\Aut_+(P)$ by projection, replacing $s^\theta$ with $s^\theta|_{\At(P,\sigma)}$. 
  We would like to interpret \eqref{eq:teichred} as a symplectic reduction for this moment map. As we will see, the  boundary term causes some complications. Let us therefore begin with the case $\pS=\emptyset$.

\subsection{Symplectic structure: the case $\pS=\emptyset$}
If the boundary is empty, only the bulk term $-F^\theta\cdot s^\theta$ of the moment map remains. The moment map for $\Aut_+(P,\sigma)$ is  thus given by 
\begin{equation}\label{eq:restrictedmommap}
 \A^{\on{pos}}(P)\to \aut(P,\sigma)^*=\Omega^2(\Sigma,\At(P,\sigma)^*),\ \theta\mapsto -F^\theta\cdot s^\theta|_{\At(P,\sigma)}.\end{equation}

\begin{proposition}\label{prop:casewithoutboundary}
	Let $\Sigma$ be a compact oriented surface without boundary, and $P\to \Sigma$ a principal $G$-bundle with developing section satisfying $\ez(P,\sigma)=\chi(\Sigma)$. Then 
	\[ \on{Teich}(\Sigma)=\A^{\on{pos}}(P)\qu \Aut_\oz(P,\sigma),\]
	a symplectic quotient, with a residual action of $\on{MCG}(\Sigma)$ preserving the symplectic structure.  
\end{proposition}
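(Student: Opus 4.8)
The plan is to identify the zero level set of the moment map \eqref{eq:restrictedmommap} with $\A^{\on{pos}}_{\on{flat}}(P)$, and then to invoke Theorem \ref{tr:hypasquotient}.

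First I would compute that zero level set. The point is that, since $\pS=\emptyset$, the bundle map $s^\theta|_{\At(P,\sigma)}\colon\At(P,\sigma)\to\g(P)$ is an \emph{isomorphism} whenever $\theta$ is positive. Indeed, the short exact sequence $0\to\At(P,\sigma)\to\At(P)\to V_\sigma\to 0$ is now globally defined, and positivity of $\theta$ says exactly that the horizontal lift $j^\theta\colon T\Sigma\to\At(P)$ maps isomorphically onto $V_\sigma$ under the quotient map; hence $j^\theta(T\Sigma)$ is a complement to $\At(P,\sigma)$ in $\At(P)$. Since $s^\theta$ is the projection $\At(P)\to\g(P)$ along $j^\theta(T\Sigma)$, its restriction to $\At(P,\sigma)$ has trivial kernel, and a rank count (both bundles have rank $3$) gives the claim. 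It follows that the $\At(P,\sigma)^*$-valued $2$-form $-F^\theta\cdot s^\theta|_{\At(P,\sigma)}$ vanishes precisely when $F^\theta\cdot w=0$ for every section $w$ of $\g(P)$, i.e.\ (by nondegeneracy of the metric) precisely when $\theta$ is flat. Hence the zero level set of \eqref{eq:restrictedmommap} is exactly $\A^{\on{pos}}_{\on{flat}}(P)$, and Theorem \ref{tr:hypasquotient} (with the superscript $0$ dropped, since $\pS=\emptyset$) identifies $\A^{\on{pos}}(P)\qu\Aut_\oz(P,\sigma)$ with $\on{Teich}(\Sigma)$ as a set.

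Next I would verify that this symplectic reduction is clean, so that $\omega_{AB}$ descends to a genuine weak symplectic form. The structural input is that every positive flat connection is \emph{irreducible}: by Proposition \ref{prop:flat=hyperbolic} its holonomy is that of a hyperbolic structure, hence a discrete, faithful, non-elementary representation $\pi_1(\Sigma)\to G$. This gives, first, that $0$ is a regular value of \eqref{eq:restrictedmommap} — at a flat $\theta$ its linearization is $a\mapsto-(\d^\theta a)\cdot s^\theta|_{\At(P,\sigma)}$, whose cokernel is measured by the twisted cohomology $H^2(\Sigma,\g(P))\cong H^0(\Sigma,\g(P))^*=0$ — so the zero level set is a smooth submanifold; and, second, that $\Aut_\oz(P,\sigma)$ acts freely on it, since an automorphism fixing $\theta$ has base map an isometry of the associated hyperbolic metric isotopic to the identity, hence trivial, and is then a $\theta$-parallel $\sigma$-preserving gauge transformation, necessarily given by an element of the (trivial) centralizer of the holonomy. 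Invariance of $\omega_{AB}$ then yields a well-defined $2$-form $\omega$ on the finite-dimensional Hausdorff manifold $\on{Teich}(\Sigma)$, and the usual Atiyah--Bott/Goldman identification $T_{[\theta]}\on{Teich}(\Sigma)\cong H^1(\Sigma,\g(P))$, under which $\omega$ becomes the cup-product pairing twisted by the metric on $\g$, shows that $\omega$ is weakly nondegenerate by Poincar\'e duality. The relevant bookkeeping is assembled in Appendix \ref{app:AB}.

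Finally, for the residual action: $\omega_{AB}$ is invariant under $\Aut_+(P)\supset\Aut_+(P,\sigma)$, and by Proposition \ref{prop:generalstructure} every orientation-preserving diffeomorphism of $\Sigma$ lifts to an element of $\Aut_+(P,\sigma)$; thus each mapping class is realized by such an automorphism, and the induced action of $\on{MCG}(\Sigma)=\Diff_+(\Sigma)/\Diff_\oz(\Sigma)$ on $\on{Teich}(\Sigma)$ preserves $\omega$. I expect the main obstacle to be the cleanness of the reduction — checking that $0$ is a regular value and that the action is free with smooth Hausdorff quotient in this infinite-dimensional context — which is precisely where positivity is used twice: once to cut out $\A^{\on{pos}}_{\on{flat}}(P)$, and again, through the resulting irreducibility, to make the Atiyah--Bott reduction behave as in the closed-surface case.
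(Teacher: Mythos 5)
Your proposal is correct and follows essentially the same route as the paper: the key step in both is that positivity makes $s^\theta|_{\At(P,\sigma)}\colon\At(P,\sigma)\to\g(P)$ a bundle isomorphism (you argue via the complement $j^\theta(T\Sigma)$, the paper via the quotient diagram modulo $\g(P,\sigma)$ — the same observation), so the zero level set of the moment map is exactly $\A^{\on{pos}}_{\on{flat}}(P)$, and the identification of the quotient with $\on{Teich}(\Sigma)$ and the symplectic $\on{MCG}$-action follow as you say. The additional material on regularity of the zero level, freeness via irreducibility, and weak nondegeneracy via Poincar\'e duality is correct but is not spelled out in the paper, which treats these points as standard.
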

\begin{proof}
We claim that the zero level set of \eqref{eq:restrictedmommap} is the space $\A^{\on{pos}}_{\on{flat}}(P)$ of flat connections. 
The map ${s^\theta} \colon  \At(P,\sigma)\to  \g(P)$ restricts to the identity on the subbundles $\g(P,\sigma)$, and gives a 
commutative diagram 
\[ \xymatrix{\At(P,\sigma)\ar[d] \ar[r]^{s^\theta} & \g(P)\ar[d]\\
	T\Sigma \ar[r] & V_\sigma
}\]	
where the vertical maps are the quotients maps for the subbundle $\g(P,\sigma)$. For $\theta\in \A^{\on{pos}}(P)$ the lower horizontal map is an isomorphism, hence so is the upper map.  Hence $F^\theta\cdot s^\theta(v)=0$ for all 
$v\in \aut(P,\sigma)=\Gamma(\At(P,\sigma))$ if and only if $F^\theta=0$. Since $\Aut_+(P,\sigma)$ preserves the 
Atiyah-Bott form, the induced action of $\Aut_+(P,\sigma)/\Aut_\oz(P,\sigma)\cong \on{MCG}(\Sigma)$ is again symplectic. 
\end{proof}

	Since the condition $\ez(P)=\chi(\Sigma)$ determines $P$ 
up to isomorphism, the symplectic 2-form $\omega$ on $\on{Teich}(\Sigma)$  does not depend on its choice. 
We will verify in Section \ref{sec:fn} that this symplectic form is the standard Weil-Petersson form, given in Fenchel-Nielsen coordinates by Wolpert's theorem. 	

It is clear from the construction that the residual action of the mapping class group $\on{MCG}(\Sigma)$ preserves the symplectic structure. Hence, we also have 
\[ \M(\Sigma)=\A^{\on{pos}}(P)\qu \Aut_+(P,\sigma).\]

\begin{remark}
	By a classical result, obtained independently by  Goldman 
	\cite{gol:top} and Hitchin \cite{hit:self}, the symplectic structure on Teichm\"uller space $\on{Teich}(\Sigma)$ may be obtained directly as a moduli space of flat connections,  without having to invoke developing sections. That is, 
	\begin{equation} \label{eq:golhit} \on{Teich}(\Sigma)=\A(P)\qu \Gau(P)\end{equation} 
	for any choice of $G$-bundle with $\ez(P)=\chi(\Sigma)$. 
 The proof  of \eqref{eq:golhit} is more involved; we do not know of 
	an independent argument obtaining this result from 
	Proposition \ref{prop:casewithoutboundary}. 
\end{remark}

\subsection{Symplectic structure: the case $\pS\neq \emptyset$}
We now turn to the case of a possibly non-empty boundary. We shall denote $Q=\p P,\ \tau=\p\sigma$.
Equation \eqref{eq:teichred} represents the Teichm\"uller space as a quotient
by the group $\zz\!\Aut_\oz(P,\sigma)$, the identity component of automorphisms whose base map fixes $\pS$. 
However: 

\begin{quote} The pullback of $\omega_{AB}$  to $\A^{\on{pos}}_{\on{flat}}(P)$ does \underline{not} descend to the quotient.  \end{quote}

The problem is that the restriction map $\zz\!\Aut_\oz(P,\sigma)\to \Gau(Q)$ is non-trivial (in fact, it is surjective). Hence,  the boundary terms of the moment map are present. 
Our strategy is to carry out the reduction in stages. Let 
 \[ \Aut_\oz(P,Q,\sigma)\subset \zz\!\Aut_\oz(P,\sigma)
 \]
be the kernel of the restriction map, and put 
%
\begin{equation}\label{eq:hatteich}
 \wh{\on{Teich}}(\Sigma)= \A^{\on{pos}}_{\on{flat}}(P)/\Aut_\oz(P,Q,\sigma)=
\on{Teich}(\Sigma)\times_{\on{Proj}(\pS)} \A^{\on{pos}}(Q). \end{equation}	
Elements of this space are represented by hyperbolic structures on $\Sigma$ together with a lift of the corresponding projective structure on the boundary  to a $\tau$-positive connection on $Q$. 

\begin{lemma}\label{lem:psi}
	$\wh{\on{Teich}}(\Sigma)$ is a symplectic quotient
	$\A^{\on{pos}}(P)\qu \Aut_\oz(P,Q,\sigma)$.  
\end{lemma}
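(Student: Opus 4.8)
The plan is to verify that $\wh{\on{Teich}}(\Sigma)$ is obtained as a symplectic reduction of $\A^{\on{pos}}(P)$ at level zero for the action of $\Aut_\oz(P,Q,\sigma)$, exactly as in the boundaryless case (Proposition \ref{prop:casewithoutboundary}). The key point is that the group $\Aut_\oz(P,Q,\sigma)$ is, by definition, the kernel of the restriction map $\zz\!\Aut_\oz(P,\sigma)\to \Gau(Q)$, so its base maps fix the boundary \emph{and} it acts trivially on $P|_{\pS}$. Consequently the boundary term of the Atiyah-Bott moment map (the map $\theta\mapsto \p\theta$, cf. Appendix \ref{app:AB}) drops out when we pair with elements of $\aut_\oz(P,Q,\sigma)$, since those infinitesimal automorphisms vanish along $\pS$. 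What remains is only the bulk term: the moment map for the action of $\Aut_\oz(P,Q,\sigma)$ on $\A^{\on{pos}}(P)$ is $\theta\mapsto -F^\theta\cdot s^\theta|_{\At(P,Q,\sigma)}$, where $\At(P,Q,\sigma)$ is the Lie algebroid of sections of $\At(P,\sigma)$ vanishing along the boundary.

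The first step is to identify the zero level set of this moment map with $\A^{\on{pos}}_{\on{flat}}(P)$. This is the same argument as in Proposition \ref{prop:casewithoutboundary}: for $\theta$ positive, the splitting $s^\theta$ fits into the commutative diagram with $\At(P,\sigma)\to T\Sigma$ on the left and $\g(P)\to V_\sigma$ on the right, and positivity makes the bottom (hence the top) arrow an isomorphism. Over the interior this immediately forces $F^\theta=0$ on the zero level set. One must check that pairing against sections of $\At(P,\sigma)$ that vanish on $\pS$ still detects all of $F^\theta$; this holds because $F^\theta\cdot s^\theta$ is a 2-form on $\Sigma$ valued in $\At(P,\sigma)^*$, and a 2-form on a surface that pairs to zero with all compactly-supported-in-the-interior sections (which include an interior-dense supply thanks to the isomorphism $s^\theta|_{\At(P,\sigma)}$) must vanish identically by continuity, since $F^\theta$ extends smoothly across $\pS$. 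So the zero set is precisely $\A^{\on{pos}}_{\on{flat}}(P)$.

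The second step is to check that the quotient of this zero set by $\Aut_\oz(P,Q,\sigma)$ is precisely $\wh{\on{Teich}}(\Sigma)$ as defined in \eqref{eq:hatteich}, which is immediate from the definition of that space as $\A^{\on{pos}}_{\on{flat}}(P)/\Aut_\oz(P,Q,\sigma)$, and that the quotient is a smooth (weak) symplectic manifold — here one invokes the standard fact that the action is free (modulo the usual caveat that the center of $G$ is trivial for $\PSL(2,\R)$, so there are no stabilizer issues; a positive flat connection has no infinitesimal automorphisms in $\aut(P,Q,\sigma)$ beyond zero, because such an automorphism would be a covariantly constant section of $\g(P,\sigma)$ vanishing on the boundary, hence zero). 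The reduced symplectic form is then the unique 2-form pulling back to the restriction of $\omega_{AB}$, and one notes the reduction is compatible with the residual action of the quotient group $\zz\!\Aut_\oz(P,\sigma)/\Aut_\oz(P,Q,\sigma)\cong\Gau(Q)$ and of $\Aut_\oz(Q,\tau)$, setting up the next stage of reduction.

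The main obstacle is the functional-analytic bookkeeping: these are infinite-dimensional Fréchet manifolds, the symplectic form is only weak, and one must be careful that $\A^{\on{pos}}(P)$ is an open subset of the affine space $\A(P)$ so that the Atiyah-Bott form restricts, that the moment map is genuinely equivariant with the correct cocycle (the boundary cocycle being absent here precisely because we restricted to $\Aut_\oz(P,Q,\sigma)$), and that the zero level set is a smooth submanifold with the action admitting local slices. I expect these to be handled by citing the appendix and the general theory rather than proved from scratch; the geometric content — that positivity kills the kernel of $s^\theta$ and the boundary term is inert on this subgroup — is what needs to be spelled out.
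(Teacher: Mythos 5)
Your proposal is correct and follows essentially the same route as the paper: identify $\aut(P,Q,\sigma)$ with the sections of $\At(P,\sigma)$ vanishing along $\pS$ (so the boundary term of the moment map is inert), use the positivity of $\theta$ to see that $s^\theta|_{\At(P,\sigma)}$ is an isomorphism onto $\g(P)$ and hence that the zero level set is exactly $\A^{\on{pos}}_{\on{flat}}(P)$, and then read off the quotient from the definition \eqref{eq:hatteich}. Your explicit continuity remark (that vanishing of $F^\theta\cdot s^\theta(v)$ for sections supported in the interior already forces $F^\theta=0$ up to the boundary) is a point the paper leaves implicit, but it is the same argument.
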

\begin{proof}
The Lie algebra $\aut(P,Q,\sigma)$ is the space of sections of $\At(P,\sigma)$ vanishing along the boundary $\pS$. 
By the same argument as in the proof of Proposition \ref{prop:casewithoutboundary}, if 
$\theta\in  \A^{\on{pos}}(P)$, we have $ F^\theta\cdot s^\theta(v)=0$ for all $v\in \aut(P,Q,\sigma)$ if and only if $F^\theta=0$. 
	Hence, the symplectic quotient by this subgroup is 
	$\wh{\on{Teich}}(\Sigma)$. 
\end{proof}
According to Proposition \ref{prop:restrictionsurjective}, the map $\zz\!\Aut_\oz(P,\sigma)\lra \Gau(Q,\tau)$ is surjective; hence 
$\wh{\on{Teich}}(\Sigma)$ has a residual  action of the group $\Gau(Q,\tau)$, with quotient ${\on{Teich}}(\Sigma)$.
Let 
\[ \A(Q,\tau)=\A(Q)/\on{ann}(\gau(Q,\tau)).\]
This is an affine $\Gau(Q,\tau)$-space, with linear action the coadjoint action on  $\gau(Q,\tau)^*$. 
\begin{lemma}
	The moment map for the action of $\Gau(Q,\tau)$ on $\wh{\on{Teich}}(\Sigma)$ is given by 
	\[ 	\Psi\colon	\wh{\on{Teich}}(\Sigma)\to \A(Q,\tau),\ \ 
	[\theta]\mapsto \ \p\theta\!\!
	\mod \on{ann}(\gau(Q,\tau)).\]
\end{lemma}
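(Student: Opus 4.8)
The plan is to identify the moment map for the residual action of $\Gau(Q,\tau)$ on $\wh{\on{Teich}}(\Sigma)$ by reducing in stages from the Atiyah-Bott moment map on $\A^{\on{pos}}(P)$ for the full group $\zz\!\Aut_\oz(P,\sigma)$. Recall that this moment map has a bulk term $-F^\theta\cdot s^\theta$ and a boundary term given by the pullback $\p\theta$. By Lemma \ref{lem:psi}, passing to the symplectic quotient by the subgroup $\Aut_\oz(P,Q,\sigma)$ (which kills the bulk term, forcing $F^\theta=0$) yields $\wh{\on{Teich}}(\Sigma)$, and the remaining symmetry is the quotient group $\zz\!\Aut_\oz(P,\sigma)/\Aut_\oz(P,Q,\sigma)$, which surjects onto $\Gau(Q,\tau)$ by Proposition \ref{prop:restrictionsurjective}. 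By the general principle of reduction in stages, the moment map for this residual action is obtained from the original one by restricting the functional $-F^\theta\cdot s^\theta + (\text{boundary term})$ to the Lie subalgebra that survives, namely $\gau(Q,\tau)$, and then reading off what is left: since $F^\theta=0$ on $\wh{\on{Teich}}(\Sigma)$, only the boundary contribution $\p\theta$ remains.

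The key steps, in order, would be: First, recall from the appendix the precise form of the Atiyah-Bott moment map with boundary term for the action of $\zz\!\Aut_+(P)$, and in particular verify that for $v\in\aut(P,\sigma)$ whose base map fixes $\pS$, the pairing of the moment map with $v$ is $\langle -F^\theta\cdot s^\theta, v\rangle + \langle \p\theta, \iota_{\pS}^*v\rangle$, where $\iota_{\pS}^*v\in\aut(Q,\tau)$ is the boundary restriction. Second, observe that the restriction map $\zz\!\Aut_\oz(P,\sigma)\to\Gau(Q,\tau)$ (not all of $\Aut(Q,\tau)$, since the base maps fix $\pS$ pointwise) identifies the relevant quotient Lie algebra with $\gau(Q,\tau)$; here one uses Proposition \ref{prop:restrictionsurjective} together with the identification of the kernel's Lie algebra as $\aut(P,Q,\sigma)$. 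Third, on the zero level set $\A^{\on{pos}}_{\on{flat}}(P)$ the bulk term vanishes identically, so the descended moment map is $[\theta]\mapsto \p\theta$ evaluated against elements of $\gau(Q,\tau)$; this is exactly the class of $\p\theta$ in $\A(Q)/\on{ann}(\gau(Q,\tau)) = \A(Q,\tau)$. Fourth, check equivariance: $\A(Q,\tau)$ is the affine $\Gau(Q,\tau)$-space whose linearization is the coadjoint action on $\gau(Q,\tau)^*$, and the pullback map $\theta\mapsto\p\theta$ is equivariant for the action of $\zz\!\Aut_\oz(P,\sigma)$ on $\A(P)$ and the action of $\Gau(Q,\tau)$ on $\A(Q)$ (gauge transformations act on connection 1-forms by $\bullet$), so the quotient map is $\Gau(Q,\tau)$-equivariant as claimed.

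The main obstacle I expect is bookkeeping with the boundary term of the Atiyah-Bott moment map and making sure the identification $\A(Q,\tau)\cong\gau(Q,\tau)^*$-affine-space is the correct target, i.e.\ that $\p\theta\mod\on{ann}(\gau(Q,\tau))$ is genuinely well-defined on $\wh{\on{Teich}}(\Sigma)$ rather than just on $\A^{\on{pos}}_{\on{flat}}(P)$. The point is that $\Aut_\oz(P,Q,\sigma)$ acts trivially along $\pS$ (its elements are supported away from the boundary, or at least restrict to the identity on $Q$), so $\p\theta$ is genuinely constant along the orbits we have quotiented by; this needs to be spelled out using the description of $\Aut_\oz(P,Q,\sigma)$ as the kernel of the restriction map. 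The other subtlety is the "in stages" argument itself: one should confirm that $\Aut_\oz(P,Q,\sigma)$ is a normal subgroup of $\zz\!\Aut_\oz(P,\sigma)$ with the stated quotient, and that the moment map for the residual quotient-group action on the first-stage reduced space is indeed the descent of the original moment map composed with the projection $\aut(P,\sigma)^*\to\gau(Q,\tau)^*$ dual to the inclusion. Both of these are standard, but the surjectivity of $\zz\!\Aut_\oz(P,\sigma)\to\Gau(Q,\tau)$ — rather than merely onto its identity component — should be invoked explicitly from Proposition \ref{prop:restrictionsurjective} and Lemma \ref{lem:normalform2}.
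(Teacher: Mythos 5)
Your proposal is correct and follows the same route as the paper: the paper's proof is a one-line appeal to reduction in stages, noting that the boundary term of the $\zz\!\Aut_\oz(P)$-moment map is $\theta\mapsto\p\theta$, which after projecting to $\gau(Q,\tau)^*$ and restricting to the flat (zero-bulk) level set gives $\p\theta\bmod\on{ann}(\gau(Q,\tau))$. Your version merely spells out the well-definedness, equivariance, and surjectivity checks that the paper leaves implicit.
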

\begin{proof}
This follows by reduction, since the boundary term of the moment map for the $\zz\Aut_\oz(P)$-action on $\A(P)$ is $\theta\mapsto \p\theta$. 
\end{proof}
 
Note that $\Psi$ takes values in the subspace $\A^{\on{pos}}(Q,\tau)=\A^{\on{pos}}(Q)/\on{ann}(\gau(Q,\tau))$. 
It turns out that the image of this map is a single coadjoint orbit: 
\begin{lemma}\label{lem:ds}
The action of $\Gau(Q,\tau)$ on  $\A^{\on{pos}}(Q,\tau)$  is free and transitive. 
\end{lemma}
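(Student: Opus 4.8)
The plan is to verify freeness and transitivity separately, working in the local normal form for $(Q,\tau)$ near each point of $\CC$. First I would recall that $\A^{\on{pos}}(Q)$ is an affine space modelled on $\Omega^1(\CC,\g(Q))$, and that, by the discussion of Section \ref{subsec:flat} together with the classification of projective structures via Hill operators (Section \ref{subsec:adapted} and \cite{ovs:pro}), every $\tau$-positive connection is locally gauge equivalent to one in Fefferman--Graham form $A = \left(\begin{array}{cc} 0 & 1 \\ -T & 0 \end{array}\right)\d x$ for a locally defined Hill potential $T(x)$; this is the analogue of Lemma \ref{lem:normalform} on the boundary curve. The reduction $Q_{B^-}=\tau^{-1}(0)$ lets us write $\gau(Q,\tau)=\Gamma(\g(Q,\tau))$ with $\g(Q,\tau)$ the bundle with fibre $\mf{b}^-=\mf{a}\oplus\mf{n}^-$, i.e.\ locally the lower-triangular subalgebra.

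\emph{Freeness.} Suppose $h\in\Gau(Q,\tau)$ fixes some $\theta\in\A^{\on{pos}}(Q)$ modulo $\on{ann}(\gau(Q,\tau))$; I would first upgrade this to a genuine fixed point by noting that the pairing of $\p\theta$ with $\gau(Q,\tau)$ determines $\theta$ up to the annihilator, and then argue at the level of honest connections. A gauge transformation fixing a flat connection must be covariantly constant, hence determined by its value at one point of each component of $\CC$ together with the holonomy; since it takes values in the group bundle $G(Q,\tau)$ with contractible fibre $B^-$, and since positivity of $\theta$ forces the holonomy around each circle to be hyperbolic (the developing map of the induced projective structure is a genuine covering onto its image in $\p\DD$), the only $B^-$-valued covariantly constant section commuting with a hyperbolic holonomy generator is the identity. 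This gives freeness.

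\emph{Transitivity.} Given two positive connections $\theta_0,\theta_1$ on $Q$, both induce projective structures on $\CC$; I would use Theorem \ref{tr:hypasquotient} (or rather its curve analogue, $\on{Proj}(\CC)=\A^{\on{pos}}(Q)/\Gau(Q,\tau)$, stated right after it) to reduce to the case where $\theta_0$ and $\theta_1$ induce the \emph{same} projective structure, so they differ by an element of $\Gau(Q,\tau)$ only if that projective structure is well-defined $\Gau(Q,\tau)$-equivariantly — but that is exactly the content of $\on{Proj}(\CC)=\A^{\on{pos}}(Q)/\Gau(Q,\tau)$. Concretely, put $\theta_0$ in Fefferman--Graham gauge locally; the infinitesimal version of the claim is that the map $\gau(Q,\tau)\to\Omega^1(\CC,\g(Q))/(\text{tangent to }\A^{\on{pos}})$, $\xi\mapsto \d^{\theta_0}\xi$, is onto the tangent space of $\A^{\on{pos}}(Q,\tau)$; computing $\d^{A}\xi$ for $\xi$ lower-triangular against the Fefferman--Graham $A$ shows that the upper-right entry of $\d^A\xi$ (which is the direction transverse to the $\on{ann}(\gau(Q,\tau))$-quotient) runs over all of $\Omega^1(\CC)$ as $\xi$ varies, because the relevant operator is $\xi\mapsto \xi' + (\text{lower-order})$ acting on the $\mf{n}^-$-component, which is surjective on $C^\infty(S^1)$. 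I expect the main obstacle to be the global transitivity statement rather than the infinitesimal one: one must check that there is no holonomy obstruction, i.e.\ that positivity really does pin down the conjugacy class of the holonomy (hyperbolic with a prescribed trace once the projective structure is fixed), so that two positive connections inducing the same projective structure are globally, not just locally, $\Gau(Q,\tau)$-equivalent. This is where the positivity hypothesis is essential and where I would spend the most care, invoking the fact that the developing map for the projective structure on each circle is equivariant under a hyperbolic element of $G$ and applying the uniqueness part of Proposition \ref{prop:quantumnumber}-type reasoning on the curve.
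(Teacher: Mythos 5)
Your proposal misidentifies the space being acted on, and this derails both halves of the argument. The lemma is about the quotient $\A^{\on{pos}}(Q,\tau)=\A^{\on{pos}}(Q)/\on{ann}(\gau(Q,\tau))$: in a trivialization $Q=S^1\times G$ with $A=\left(\begin{array}{cc}\hh s & a\\ u & -\hh s\end{array}\right)\d x$, the annihilator of $\gau(Q,\tau)\cong C^\infty(S^1,\mf{b}^-)$ is $\Omega^1(S^1,\mf{n}^-)$, so passing to the quotient discards the lower-left entry $u$ --- which is exactly the data carrying the Hill potential, the projective structure, and the holonomy. Your transitivity plan (``reduce to the case where $\theta_0,\theta_1$ induce the same projective structure'' and then worry about a ``holonomy obstruction'') is therefore aimed at the wrong target: on $\A^{\on{pos}}(Q)$ itself the action is \emph{not} transitive (its quotient is $\on{Proj}(\CC)$, which is not a point), while on $\A^{\on{pos}}(Q,\tau)$ the projective structure and holonomy have already been forgotten and there is nothing to reduce to. The actual proof is a pointwise computation: a class in $\A^{\on{pos}}(Q,\tau)$ is just the pair $(a,s)$ with $a>0$, and $h=\left(\begin{array}{cc}1&0\\ -\hh s+\hh a'/a&1\end{array}\right)\left(\begin{array}{cc}a^{-1/2}&0\\0&a^{1/2}\end{array}\right)$ is the \emph{unique} element of $C^\infty(S^1,B^-)$ carrying $(a,s)$ to the basepoint $(1,0)$; uniqueness plus transitivity to a basepoint gives freeness.

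Two further concrete errors. First, your freeness argument rests on the claim that positivity forces the holonomy to be hyperbolic; this is false (positive connections realize Hill operators of every monodromy type --- e.g.\ $T=\tfrac14$ for the disk boundary is not hyperbolic), and even if it were true a hyperbolic element of $G$ has a one-parameter centralizer, so ``covariantly constant and commuting with the holonomy'' would not force triviality. The ``upgrade to a genuine fixed point'' you invoke at the start is precisely the computation you are trying to avoid (it holds, but only because $\lambda^2a=a$ forces $\lambda=1$ and then $s-2\mu a=s$ forces $\mu=0$ since $a>0$). Second, your infinitesimal transitivity operator is not ``$\xi\mapsto\xi'+\text{lower order}$'' (and $d/dx$ is not surjective on $C^\infty(S^1)$ in any case): the linearized action on $(a,s)$ is, to leading order, multiplication by $\pm2a$ on the $\mf{a}$- and $\mf{n}^-$-components of $\xi$, which is invertible exactly because $a>0$. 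That is where positivity enters --- not through any holonomy consideration.
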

\begin{proof}
This is a well-known fact from Drinfeld-Sokolov theory. We may assume that $Q=S^1\times G$, with $\tau(g,x)=g^{-1}\cdot 0$. The connections on $Q$ are described by their connection 1-forms  $A\in \Omega^1(S^1,\g)$. Write 
\begin{equation}\label{eq:connectiononboundary} A=\left(\begin{array}{cc}
\hh s & a\\ u & -\hh s
\end{array}
\right)\ \d x\end{equation}
with functions $a,s,u\in C^\infty(S^1)$; the connection is $\tau$-positive if and only if $a>0$. 
Taking the quotient by $\on{ann}(\gau(Q,\tau))\cong \Omega^1(S^1,\mf{n^-})$ amounts to omitting the lower left corner; hence $a,s$ serve as parameters on $\A^{\on{pos}}(Q,\tau)$.  There is a unique gauge transformation by an element $h\in C^\infty(S^1,B^-)=\Gau(Q,\tau)$ putting \eqref{eq:connectiononboundary} into Drinfeld-Sokolov normal form, 
that is, having $1$ in the upper right corner and with vanishing diagonal entries. Explicitly,
\[ h=\left(\begin{array}{cc}
1 & 0\\  -\hh s+\hh \f{a'}{a}&1 
\end{array}
\right) \left(\begin{array}{cc}
a^{-\f{1}{2}} & 0 \\ 0  & a^{\f{1}{2}} 
\end{array}
\right).\]
In other words, $h$ is the unique element taking the class of $A$ in $\A^{\on{pos}}(Q,\tau)$ to the base point of $\A^{\on{pos}}(Q,\tau)$ given by $s=0,\ a=1$. 
\end{proof}

Since $\O=\A^{\on{pos}}(Q,\tau)$ is a coadjoint orbit, it has a unique symplectic structure such that the 
$\Gau(Q,\tau)$-action is Hamiltonian, with moment map the inclusion. (See Appendix \ref{app:A1}.) 

By the well-known `shifting trick' from symplectic geometry, the quotient 
$\wh{\on{Teich}}(\Sigma)/\Gau(Q,\tau)$ may be  recast as a symplectic quotient. Let $\O^-$ be the space $\O$ with the opposite symplectic structure. 
\begin{theorem}\label{th:teichhat}
The Teichm\"uller space is a symplectic quotient
\[ \on{Teich}(\Sigma)=(\wh{\on{Teich}}(\Sigma)\times \O^-)\qu \Gau(Q,\tau),\]
where $\wh{\on{Teich}}(\Sigma)=\A^{\on{pos}}(P)\qu \Aut_\oz(P,Q,\sigma)$. 
In particular, $\on{Teich}(\Sigma)$ acquires a symplectic structure. The action of 
$\on{MCG}(\Sigma)$ on  $\on{Teich}(\Sigma)$ preserves the symplectic structure. 
\end{theorem}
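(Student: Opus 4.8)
The plan is to assemble Theorem~\ref{th:teichhat} from pieces already in place, so that essentially nothing new needs to be proved beyond bookkeeping. First I would invoke Lemma~\ref{lem:psi}, which identifies $\wh{\on{Teich}}(\Sigma)$ as the symplectic quotient $\A^{\on{pos}}(P)\qu \Aut_\oz(P,Q,\sigma)$, so that $\wh{\on{Teich}}(\Sigma)$ carries a well-defined (weak) symplectic form $\wh\omega$ obtained by descent of $\omega_{AB}$ to the flat locus $\A^{\on{pos}}_{\on{flat}}(P)$. Next I would record that, by Proposition~\ref{prop:restrictionsurjective}, the residual action of $\Gau(Q,\tau)$ on $\wh{\on{Teich}}(\Sigma)$ is Hamiltonian with moment map $\Psi$ (the previous lemma), and that by Lemma~\ref{lem:ds} the image of $\Psi$ is the single coadjoint orbit $\O=\A^{\on{pos}}(Q,\tau)$, on which $\Gau(Q,\tau)$ acts freely and transitively. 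Finally, since $\wh{\on{Teich}}(\Sigma)/\Gau(Q,\tau)=\on{Teich}(\Sigma)$ by construction \eqref{eq:hatteich}, the shifting trick identifies this quotient with the symplectic reduction $(\wh{\on{Teich}}(\Sigma)\times \O^-)\qu \Gau(Q,\tau)$ at level $0$: a point of the reduced space is a pair $([\theta],\xi)$ with $\Psi([\theta])=\xi\in\O$ modulo the diagonal $\Gau(Q,\tau)$-action, and freeness-plus-transitivity of the $\O$-action lets us gauge $\xi$ to a fixed base point, leaving exactly $\on{Teich}(\Sigma)$. This endows $\on{Teich}(\Sigma)$ with a (weak) symplectic form, proving the first two sentences.

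For the final sentence, I would argue that the mapping class group action is symplectic because it is induced from a symplectic action upstairs. Concretely, $\on{MCG}(\Sigma)=\zz\Diff_+(\Sigma)/\zz\Diff_\oz(\Sigma)$ lifts (via Proposition~\ref{prop:generalstructure}, which shows every boundary-fixing diffeomorphism lifts to an automorphism of $(P,\sigma)$) to automorphisms in $\zz\!\Aut_+(P,\sigma)$; these preserve the Atiyah-Bott form $\omega_{AB}$ on $\A(P)$, preserve the positive flat locus $\A^{\on{pos}}_{\on{flat}}(P)$, and commute with $\Gau(Q,\tau)$ as well as with $\zz\!\Aut_\oz(P,\sigma)$ since the latter is normal. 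Moreover such a lift acts trivially on the $\O^-$-factor (the orbit $\O$ depends only on $(Q,\tau)$, not on the mapping class), so the diagonal action on $\wh{\on{Teich}}(\Sigma)\times\O^-$ is symplectic and descends to a symplectic action on the reduction. The quotient of this $\zz\!\Aut_+(P,\sigma)$-action by the already-divided-out $\zz\!\Aut_\oz(P,\sigma)$ is precisely $\on{MCG}(\Sigma)$, so its action on $\on{Teich}(\Sigma)$ preserves $\omega$.

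The main obstacle I anticipate is not logical but analytic: these are infinite-dimensional spaces, the symplectic forms are \emph{weak}, and the quotients are not a priori smooth manifolds, so "symplectic quotient" and "the form descends" must be interpreted with care. The key technical point to verify (and which the preceding lemmas have been arranged to supply) is that $\Psi^{-1}(\O)=\A^{\on{pos}}_{\on{flat}}(P)/\Aut_\oz(P,Q,\sigma)$ is all of $\wh{\on{Teich}}(\Sigma)$ — i.e. the moment map hits the orbit everywhere — together with the freeness of the $\Gau(Q,\tau)$-action on $\O$, which guarantees that the reduction is a genuine quotient and that the descended $2$-form is non-degenerate in the weak sense (its kernel is exactly the tangent to the $\Gau(Q,\tau)$-orbits). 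Granting the Drinfeld-Sokolov normal form of Lemma~\ref{lem:ds} and the surjectivity statements of Propositions~\ref{prop:restrictionsurjective} and~\ref{prop:generalstructure}, the argument is then a formal application of reduction in stages and the shifting trick, and I would keep the write-up at that level rather than re-deriving the Atiyah-Bott moment-map formulas (which are relegated to the appendix).
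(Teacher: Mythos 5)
Your proposal is correct and follows essentially the same route as the paper: the first two claims are exactly the assembly of Lemma \ref{lem:psi}, the moment map $\Psi$, Lemma \ref{lem:ds}, and the shifting trick (the paper's proof explicitly dismisses these as already established and only addresses the mapping class group statement), and your treatment of the $\on{MCG}(\Sigma)$-action via a symplectic lift to $\bb\Aut_+(P,\sigma)$ commuting with $\Gau(Q,\tau)$ modulo $\Aut_\oz(P,Q,\sigma)$ is the paper's argument as well. The only nitpick is your parenthetical "since the latter is normal" — normality of $\zz\Aut_\oz(P,\sigma)$ gives that the induced action on the quotient is well defined, not that the subgroups commute; the commutation (needed for $\on{MCG}$-invariance of the $\Gau(Q,\tau)$-moment map) should instead be checked directly, as the paper does, using that the $\on{MCG}$ lifts can be chosen trivial along the boundary.
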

\begin{proof}
Only the final claim remains to be proved. The action 
of $\bb\Aut_+(P,\sigma)$ on the space $\A^{\on{pos}}(P)$ preserves the Atiyah-Bott symplectic structure, and restricts to an action on the space of flat connections. We hence obtain a symplectic  action of $ \bb\Aut_+(P,\sigma)/\Aut_\oz(P,Q,\sigma)$ on $\wh{\on{Teich}}(\Sigma)$.  
This contains $\on{MCG}(\Sigma)=\Aut_+(P,Q,\sigma)/\Aut_\oz(P,Q,\sigma)$
and  $\zz\Aut_\oz(P,\sigma)/\Aut_\oz(P,Q,\sigma)=\Gau(Q,\tau)$ as commuting subgroups. 
The moment map for the  $\Gau(Q,\tau)$-action is  $\on{MCG}(\Sigma)$-invariant; hence we obtain a 
symplectic action of  $\on{MCG}(\Sigma)$ 
on the quotient. 
\end{proof}

The symplectic structure obtained in this way does not depend on the choice of $(P,\sigma)$ subject to the condition  
$\ez(P,\sigma)=\chi(\Sigma)$, since any two choices are related by a bundle isomorphism (Proposition 
\ref{prop:quantumnumber}). 
The intermediate space 
$\wh{\on{Teich}}(\Sigma)$ depends on the choice, but only through the boundary restriction $(Q,\tau)$. 
There is a canonical choice for the boundary restriction, and hence of the space $\wh{\on{Teich}}(\Sigma)$, 
coming from the theory of Drinfeld-Sokolov reduction. We will discuss it in the next 
section,  since it will also lead to a simpler description of $\on{Teich}(\Sigma)$.

\begin{remark}
It would be interesting to have a construction of the symplectic structure directly from the metric, as in the work of 
Tromba \cite{tro:tei} (see also Donaldson \cite{don:mom}, Diez-Ratiu \cite{die:gro}). 
\end{remark}

\section{$\on{Teich}(\Sigma)$ as a Hamiltonian Virasoro space} 
We will now verify that the map $\on{Teich}(\Sigma)\to \on{Proj}(\pS)$, taking the equivalence class of a hyperbolic structure on a surface with boundary to the induced projective structure on the boundary, is an affine moment map. 
The affine structure on 
$\on{Proj}(\pS)$ comes from its identification  with an affine subspace of the dual of the Virasoro Lie algebra $\mf{vir}(\pS)$, at a suitable non-zero level. We will give explicit formulas for the Hill operator on the boundary, in terms of data coming from the hyperbolic 0-metric.  

\subsection{Review of Hill operators and Virasoro algebra}\label{subsec:hill}
We shall need some background material. For more detailed information, see the standard references \cite{khe:inf,ovs:pro} as well as 
our earlier paper \cite{al:coad}.  Let ${\CC}$ be a compact, oriented 1-manifold. For $r\in \R$, we denote by $|\Omega|_\CC^r$ the space of $r$-densities. A $k$-th order differential operator $D\colon |\Omega|_\CC^{r_1}\to  |\Omega|_\CC^{r_2}$ has a principal symbol 
$\sigma_k(D)\in |\Omega|_\CC^{r_2-r_1-k}$. The principal symbol is scalar exactly when $r_2=r_1+k$. 
If $r_1+r_2=1$, the (formal) adjoint operator acts between the same spaces, and it makes sense to ask that $D$ be self-adjoint. A \emph{Hill operator} is a second order differential operator 
\[ L\colon |\Omega|_\CC^{-\f{1}{2}}\to  |\Omega|_\CC^{\f{3}{2}}\]
satisfying $L^*=L$ and $\sigma_2(L)=1$. The space of all Hill operators is an affine space
$\on{Hill}(\CC)$, with the space of quadratic differentials $|\Omega|^2_\CC$ as its space of translations. 
%
%
There is a  $\Diff_\oz(\CC)$-equivariant isomorphism 
\begin{equation}\label{eq:hillproj}
\on{Hill}(\CC)\stackrel{\cong}{\lra}\on{Proj}(\CC),
\end{equation}
taking  a Hill operator $L$ to the projective structure with charts  $(u_1:u_2)\colon U\to \RP(1)$, for 
local solutions $u_1,u_2\in |\Omega|_U^{-1/2}$ of $Lu=0$, with Wronskian $W(u_1,u_2)=-1$. The natural action of $\on{Diff}_+(\CC)$ on $\on{Hill}(\CC)$  is an affine action, with underlying linear action the coadjoint action. Here   $|\Omega|^2_\CC$ is seen as the  (smooth) dual to the space of vector fields $\on{Vect}(\CC)=|\Omega|_\CC^{-1}$. 
The Virasoro algebra $\mf{vir}(\CC)$ is the central extension of $\Vect(\CC)$ defined by this action (see Appendix \ref{app:A1}). The action of $\Diff_\oz(\CC)$ on $\mf{vir}^*_1(\CC)$ is the coadjoint Virasoro action; see e.g. \cite{dai:coa,kir:orb,laz:nor,seg:geo,wit:coa}.

Using a local coordinate $x$ on $\CC$, the $r$-density bundles are trivialized by the sections $|dx|^r$. In terms of this trivialization,  a Hill operator takes on the form 
\begin{equation}\label{eq:L} L=\f{d^2}{d x^2}+T(x)\end{equation}
for a \emph{Hill potential} $T$. For $\FF\in \on{Diff}(\CC)$, the Hill operator $\FF^{-1}\cdot L$ has 
Hill potential $\FF^{-1}\cdot T$ given by the formula 
 \begin{equation}\label{eq:hilltransformation}
(\FF^{-1}\cdot T)(x)=\FF'(x)^2 T(\FF(x))+\hh \S(\FF)(x)\end{equation}
with the Schwarzian derivative \cite{ovs:pro,ovs:what}
$\S(\FF)=\f{\FF'''}{\FF'}-\f{3}{2}\left(\f{\FF''}{\FF'}\right)^2$.  
The map \eqref{eq:hillproj} factors through the \emph{Drinfeld-Sokolov embedding} 
\begin{equation}\label{eq:dsembedding} \on{Hill}(\CC)\to \A^{\on{pos}}(Q)\end{equation}
for a canonically defined pair $(Q,\tau)$. 
The following coordinate-free description is due to Segal \cite{seg:geo}. Let $|\Lambda|^{-1/2}_\CC$ be the bundle of $-\hh$ densities. A
Hill operator $L$ determines a linear connection on the 1-jet bundle 
\begin{equation}\label{eq:e} E=J^1(|\Lambda|^{-1/2}_\CC)\end{equation}
with the property that $Lu=0$ if and only if $\nabla j^1(u)=0$. 
(By standard ODE theory, every solution is uniquely determined by its 1-jet at any given point.) 
Dually,  we obtain a connection on $E^*$. Dualizing the projection  $E\to |\Lambda|^{-1/2}_\CC$, we obtain a rank 1 subbundle of 
$E^*$, or equivalently a section of its projectivization. We take $Q\to \CC$ be the associated principal $G$-bundle,   
thus $\PP(E^*)=Q\times_G \RP(1)$, and  let $\tau\colon Q\to \RP(1)\cong \p\DD$ be the map defining this section. 
The connection on $E^*$ defined by a Hill operator descends to a $\tau$-positive connection on $Q$, defining the inclusion \eqref{eq:dsembedding}. The image of the Drinfeld-Sokolov embedding will be called the Drinfeld-Sokolov slice, denoted \begin{equation}\label{eq:dsslice}\ca{Z}\subset \A^{\on{pos}}(Q).\end{equation}
The bundle $V_\tau=\tau^*T\p\DD/G$ for Segal's $(Q,\tau)$ is canonically isomorphic to the tangent bundle
\[ V_\tau\cong T\CC.\]
Hence, given a connection $\vartheta\in \A(Q)$, the map
$\az\colon T\CC\to V_\tau$ from \eqref{eq:positivitycondition2} 
is scalar multiplication by a function $a$, and $\vartheta$ is positive if and only if $a>0$ everywhere. We have $\ca{Z}\subset a^{-1}(1)$.

The choice of  a local coordinate $x$ on $\CC$ determines a trivialization of $|\Lambda|_C^{-1/2}$, hence also of its jet bundle and consequently of $Q$. In this trivialization, $\tau(m,g)=g^{-1}\cdot 0$, and the Drinfeld-Sokolov embedding is given by the 
formula\footnote{In \cite{al:coad}, we worked with the bundle $E$ instead of $E^*$. The expression in \eqref{eq:drinfeldsolkolovnormalform} is therefore minus the transpose of  that used in \cite{al:coad}.}
\begin{equation}\label{eq:drinfeldsolkolovnormalform}
 T\,|\d x|^2\mapsto 
\left(\begin{array}{cc}
0 & 1\\ -T & 0
\end{array}
\right)\ \d x.\end{equation}
%
From the coordinate-free description, it is clear that $\Diff_+(\CC)$ acts on $Q$ by automorphisms preserving $\tau$, hence 
defining a splitting 
\begin{equation}\label{eq:dssplitting} \on{Diff}_+(\CC)\to \Aut_+(Q,\tau).\end{equation} The Drinfeld-Sokolov embedding is equivariant 
for this action. 
Using local coordinates to trivialize the bundles, this is given by 
$\FF^{-1}\mapsto (h,\FF^{-1})\in C^\infty(\CC,B^-)\rtimes \on{Diff}_+(\CC)$ with 	
\begin{equation}\label{eq:h}
 h=
 \left[\begin{array}{cc} 1 &0\\ \f{1}{2}\FF'' (\FF')^{-1} & 1
 \end{array}\right]\ 
 \left[\begin{array}{cc} (\FF')^{-\f{1}{2}} &0\\ 0 & (\FF')^{\f{1}{2}}
\end{array}\right].\end{equation}
On may check directly that $h$ is the unique $B^-$-valued function such that $h\bullet \FF^*A$
is again in the Drinfeld-Sokolov slice, with  
$T$ replaced by $\FF^{-1}\cdot T$.

\subsection{Hill potential in terms of adapted coframes}
Given a hyperbolic 0-metric $\gz$ on an oriented surface $\Sigma$ with boundary, we are interested in a description of the corresponding  Hill operator in terms of adapted coordinates $x,y$. Let $\alpha_1,\alpha_2$ be an adapted orthonormal coframe for $\gz$, with associated spin connection $\kappa$. Write 
\[ \alpha_1=\f{1}{y}(a(x)\d x+\ldots),\ \ 
\alpha_2=\f{\d y}{y}+s(x)\d x+\ldots,\ \   
\hh (\alpha_1+\kappa)=y (u(x)\d x+\ldots)\]
where the dots indicate regular 1-forms whose pullback to the boundary vanishes.  

\begin{proposition}\label{prop:Tformula}
The Hill potential corresponding to the hyperbolic 0-metric $\gz$ is given by the formula  
 	\begin{equation}\label{eq:T} T=\hh\left(\f{a''}{a}-\f{3}{2}\Big(\f{a'}{a}\Big)^2\right)-au-\f{1}{4}s^2-\hh \f{a'}{a}s+\hh s'.\end{equation}
 	
\end{proposition}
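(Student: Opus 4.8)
The plan is to reduce the statement to the coordinate computation underlying the Drinfeld--Sokolov embedding. The data $a,s,u$ are defined as the leading boundary coefficients of an adapted orthonormal coframe $\alpha_1,\alpha_2$ with spin connection $\kappa$, so by Proposition~\ref{lem:posA} they are packaged in the connection $1$-form $A$ of the associated flat connection $\theta$. First I would write out $A=\hh\!\left(\begin{smallmatrix}\alpha_2 & \alpha_1-\kappa\\ \alpha_1+\kappa & -\alpha_2\end{smallmatrix}\right)$, restrict to the collar, and record the Laurent expansion in $y$: the diagonal entry is $\hh(\d y/y + s\,\d x)+\dots$, the upper right entry is $\f{a}{y}\d x - \hh(\alpha_1+\kappa)+\dots = \f{a}{y}\d x + O(y)$, and the lower left entry is $\hh(\alpha_1+\kappa)=y u\,\d x+\dots$. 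Conjugating by the singular gauge transformation $h=\left[\begin{smallmatrix}y^{1/2}&0\\0&y^{-1/2}\end{smallmatrix}\right]$ of \eqref{eq:singgauge} -- equivalently using the trivialization from Lemma~\ref{lem:normalform} -- turns $A$ into the \emph{regular} connection $1$-form on $Q=\p P$ along $\pS$; explicitly the pullback $\p\theta$ to the boundary has connection $1$-form $\wt A = \left(\begin{smallmatrix}\hh s & a\\ u & -\hh s\end{smallmatrix}\right)\d x$, matching exactly the shape \eqref{eq:connectiononboundary} with the same letters $a,s,u$.

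Next I would invoke the identification $\on{Teich}(\Sigma)\to\on{Proj}(\pS)$ together with \eqref{eq:hillproj}, so that the Hill potential $T$ is the one attached to the $\tau$-positive connection class $[\p\theta]\in\A^{\on{pos}}(Q,\tau)$ via the Drinfeld--Sokolov slice \eqref{eq:dsslice}. By Lemma~\ref{lem:ds} there is a unique gauge transformation $h\in C^\infty(S^1,B^-)=\Gau(Q,\tau)$ bringing $\wt A$ into Drinfeld--Sokolov normal form $\left(\begin{smallmatrix}0&1\\-T&0\end{smallmatrix}\right)\d x$, and the proof of that lemma already exhibits $h=\left(\begin{smallmatrix}1&0\\ -\hh s+\hh a'/a & 1\end{smallmatrix}\right)\left(\begin{smallmatrix}a^{-1/2}&0\\0&a^{1/2}\end{smallmatrix}\right)$ as the element taking the class to the base point. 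So the only remaining task is to compute the gauge transform $h\bullet\wt A = \Ad_h(\wt A) - (\d h)h^{-1}$ and read off the $(2,1)$-entry, which is $-T$. This is a direct but somewhat lengthy $2\times2$ matrix computation; keeping track of the $\d h$ term is where the derivatives $a',a'',s'$ enter, and one organizes it by first applying the diagonal factor $\left(\begin{smallmatrix}a^{-1/2}&0\\0&a^{1/2}\end{smallmatrix}\right)$ (which rescales the off-diagonal entries and adds $-\hh(a'/a)\,\d x$ to the diagonal) and then the lower-unipotent factor.

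The main obstacle -- really the only nontrivial point -- is bookkeeping in that conjugation: one must be careful that $h\bullet\wt A$ indeed lands in the slice (the upper-right entry becomes $1$ and the diagonal entries vanish), which is guaranteed abstractly by Lemma~\ref{lem:ds} but serves as a useful consistency check, and then the lower-left entry is forced to equal $-T$. Carrying out $-(\d h)h^{-1}$ produces the term $\hh a''/a - \f34(a'/a)^2$ from the Schwarzian-type contribution of the $a^{-1/2}$ scaling composed with the $a'/a$ shift in the unipotent part, while $\Ad_h$ of the original $u\,\d x$ in the lower-left, together with the cross terms involving $s$, produces $-au - \f14 s^2 - \hh(a'/a)s + \hh s'$. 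Assembling these gives precisely \eqref{eq:T}. As a sanity check one can specialize: for the Fefferman--Graham coframe (Example~\ref{ex:example2}\ref{it:d}) one has $a\equiv 1$, $s\equiv 0$, and $u(x)=T(x)$, and \eqref{eq:T} collapses to $T=-u\cdot(-1)=T$ (up to the sign convention fixed by the footnote on $E$ versus $E^*$), confirming the normalization; and a coframe rotation or change of boundary defining function, which alters $a,s$ in a controlled way, reproduces the Schwarzian transformation law \eqref{eq:hilltransformation}.
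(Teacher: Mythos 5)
Your proposal is correct and follows essentially the same route as the paper's proof: expand the $0$-connection form in the adapted coframe, pass to the regular boundary connection $1$-form \eqref{eq:connectiononboundary} via the singular gauge transformation $\on{diag}(\varrho^{1/2},\varrho^{-1/2})$, and then apply the Drinfeld--Sokolov gauge transformation of Lemma \ref{lem:ds} to read off $-T$ as the lower-left entry. One caveat worth noting: if you carry out the final computation with the element $h$ exactly as printed in Lemma \ref{lem:ds}, the diagonal does not cancel --- the unipotent entry must be $\hh s+\hh\f{a'}{a}$ rather than $-\hh s+\hh\f{a'}{a}$ in order to land in the slice and to reproduce \eqref{eq:T} --- so the consistency check you propose (vanishing diagonal, unit upper-right entry) is exactly the right safeguard, and likewise in your Fefferman--Graham sanity check one has $u=-T$ rather than $u=T$, which removes the sign ambiguity you flagged.
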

\begin{proof}
The 0-connection 1-form \eqref{eq:A} reads 
\[ \left(\begin{array}{cc}  \f{1}{2y}\ \d y +\hh s(x)\d x+\ldots&  \f{1}{y}   (a(x)\d x+\ldots)\\  y(u(x)\d x+\ldots)& 
-\f{1}{2y}\ \d y -\hh s(x)\d x+\ldots
\end{array}\right).\]
In Section \ref{subsec:relation}, we explained that the \emph{regular} connection 1-form, describing $\partial\theta\in \A^{\on{pos}}(Q)$, is obtained by applying the `singular gauge transformation' by $\on{diag}(y^{\hh},y^{-\hh})$, and 
pulling back to $y=0$. The result is the connection 1-form 
\eqref{eq:connectiononboundary} from the proof of Lemma \ref{lem:ds}. By working out the gauge transformation 
indicated there, taking the connection to Drinfeld-Sokolov normal form, one obtains $T$ as minus the lower left corner. 
The result of this straightforward calculation is \eqref{eq:T}. 
\end{proof}

\begin{example} 
The Hill potential 	for the Poincar\'{e} disk, with coordinates $\phi$ as in Example \ref{ex:example2}\ref{it:b}, is $T(x)=\f{1}{4}$. 
For the trumpet, with geodesic neck of length $\ell$ (Example \ref{ex:example2}\ref{it:c}), we obtain $T(x)=-\f{1}{4}\ell^2$. For the Fefferman-Graham coframe (Example \ref{ex:example2}\ref{it:d}), the Hill potential agrees with the function $T$ given in that formula.

\end{example}

\subsection{Hill potential in terms of geodesic curvature}
We will now give a second description of the Hill operator of a hyperbolic 0-metric $\gz$,  motivated by the discussion in  Maldacena-Stanford-Yang \cite[Section 3]{mal:con}. Observe that the function $a(x)$ 
in \eqref{eq:T} may be read off from the leading term of the volume form;
\[ \d\vol_{\gz}=\f{1}{y^2} \big(a(x)+O(y^1)\big) \d x\wedge \d y.\]

For $y>0$, let $k(x,y)$ be the geodesic curvature of the curve $t\mapsto (x+t,y)$. 
Recall that for the standard hyperbolic metric on the upper half plane, the horizontal lines all have geodesic curvature equal to $1$. Hence
	$k(x,y)=1$ for all $x,y\in \HH$. 	
It turns out that in general, $k(x,y)=1+O(y^2)$: 

\begin{lemma}\label{lem:limit}
	For every hyperbolic 0-metric, the limit 
	\[ c(x)=\lim_{y\to 0} \f{k(x,y)-1}{y^2}\]	
	exists and defines a smooth function of $x$. 
\end{lemma}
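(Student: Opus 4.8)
The strategy is to compute the geodesic curvature $k(x,y)$ in terms of an adapted orthonormal coframe, using Cartan's structure equations, and read off the first two terms of its Laurent expansion in $y$. First I would work in local coordinates $x,y$ adapted to the boundary (Remark \ref{rem:adaptedcoord}), and fix an adapted oriented orthonormal coframe $\alpha_1,\alpha_2$ for $\gz$ with spin connection $\kappa$, as in Lemma \ref{lem:adapted}, so that $\kappa=-\alpha_1+O(y)$ and $\alpha_2=\tfrac{\d y}{y}+O(y^0)$, $\alpha_1=\tfrac{1}{y}(a(x)\,\d x+\ldots)$. The curve $\gamma_{x_0,y_0}(t)=(x_0+t,y_0)$ has velocity $\partial_x$, and its geodesic curvature is obtained by writing the unit tangent and unit normal in the orthonormal frame dual to $(\alpha_1,\alpha_2)$, differentiating along the curve, and extracting the coefficient governed by $\kappa$. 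Concretely, if $e_1,e_2$ is the frame dual to $\alpha_1,\alpha_2$, then $k$ is computed from $\langle \nabla_{\dot\gamma}\dot\gamma/|\dot\gamma|,\, \mathrm{(unit\ normal)}\rangle$, and the Levi-Civita connection in an orthonormal frame is encoded entirely by $\kappa$: $\nabla e_1=\kappa\otimes e_2$, $\nabla e_2=-\kappa\otimes e_1$. This reduces everything to evaluating $\alpha_1(\partial_x)$, $\alpha_2(\partial_x)$, $\kappa(\partial_x)$ and the angle of $\partial_x$ relative to $e_1$, all as functions that admit Laurent expansions in $y$ by construction of the adapted coframe.

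Carrying this out, I expect the formula for $k$ to be a ratio whose numerator and denominator are regular functions of $(x,y)$ after multiplying through by suitable powers of $y$. The structure equations \eqref{eq:structureequations} with $K_\gz=-1$ control the subleading coefficients: the vanishing of certain $y^{-1}$-coefficients forced in the proof of Lemma \ref{lem:adapted} guarantees that the $O(y^0)$ and $O(y^1)$ terms of $k(x,y)$ both vanish, so that $k(x,y)=1+c(x)y^2+O(y^3)$ with $c$ a smooth function of $x$ alone. This is really just a finite-order Taylor expansion argument: one needs the coframe to be smooth enough (it is, being a coframe of a smooth $0$-metric) and one needs to track coefficients through order $y^2$. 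The existence of the limit and its smoothness in $x$ then follow immediately, since $c(x)$ is a polynomial expression in the Laurent coefficients of $\alpha_1,\alpha_2,\kappa$, all of which are smooth functions of $x$.

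An alternative, perhaps cleaner, route is to invoke the local normal form Theorem \ref{th:normalform} together with the Fefferman-Graham coframe of Example \ref{ex:example2}\ref{it:d}: near any boundary point the metric can be put, by a $0$-isometry, in a form governed by a single function $T(x)$ (or, in the generality needed here, one first normalizes $\alpha_2=\tfrac{\d y}{y}$ and $\alpha_1=(1-y^2T(x))\tfrac{\d x}{y}$ after absorbing the function $a$ into the coordinate $x$ — but then $a$ reappears when one reverts to the given coordinates). Computing $k(x,y)$ directly for the Fefferman-Graham coframe is a short exercise and yields $k=1+O(y^2)$ manifestly; transporting back to arbitrary adapted coordinates introduces only the Jacobian factors $a(x)$ and lower-order corrections, which change $c(x)$ but not its existence or smoothness. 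I would present the first approach as the primary one since it is self-contained and makes the eventual explicit formula (needed for Theorem C) transparent.

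\textbf{Main obstacle.} The only real subtlety is bookkeeping: one must expand $k(x,y)$ to order $y^2$, which requires knowing the adapted coframe to order $y^1$ (equivalently, two orders beyond the leading singular term), and the geodesic-curvature formula mixes these orders through the angle function relating $\partial_x$ to the frame vector $e_1$. Getting the cancellation of the $y^0$ and $y^1$ terms right is where the hyperbolicity hypothesis ($K_\gz=-1$, via the structure equations) is genuinely used — without it, the limit defining $c(x)$ would not exist. I expect no conceptual difficulty, only the need to be careful that every coefficient entering through order $y^2$ is a smooth function of $x$, which is automatic from the smoothness of the $0$-metric.
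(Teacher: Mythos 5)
Your primary argument is correct, but it takes a genuinely different route from the paper's. The paper proves Lemma \ref{lem:limit} by first invoking the local normal form (Theorem \ref{th:normalform}) to write $\gz=\tfrac{1}{g^2}(\d f^2+\d g^2)$ for a local isometry $(f,g)$ into $\ol\HH$, then computing $k(x,y)$ as the geodesic curvature of the image curve $t\mapsto (f(x+t,y),g(x+t,y))$ via the explicit upper-half-plane formula, and finally Taylor-expanding $f,g$ in $y$ to obtain $k=1+c(x)y^2+O(y^3)$ with $c$ an explicit polynomial in $f_0',f_0'',g_1,g_1',g_1''$. Your intrinsic route — writing the unit tangent of the coordinate curve as $\cos\psi\, e_1+\sin\psi\, e_2$ in the adapted orthonormal frame and using the spin-connection formula for geodesic curvature — avoids the normal-form theorem entirely: the vanishing of the $y^0$ and $y^1$ coefficients of $k-1$ (you should say of $k-1$, not of $k$) follows from $\psi=O(y)$ together with $\kappa+\alpha_1=O(y)$ from Lemma \ref{lem:adapted}, which is indeed exactly where $K_\gz=-1$ enters; and the computation delivers $c$ directly in terms of the coefficients $a,s,u$ of Proposition \ref{prop:Tformula}, namely $c=a^{-2}\big(s'-\tfrac{a'}{a}s-\tfrac12 s^2\big)-2u/a$, which is consistent with combining \eqref{eq:T} and \eqref{eq:tformula} and with the paper's $u=0$ specialization in the proof of Theorem \ref{th:hill2}. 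What each approach buys: yours is more self-contained and feeds straight into Theorem \ref{th:hill2} without re-deriving the coframe from the normal form, while the paper's reduces everything to a completely elementary plane-curve curvature formula at the cost of invoking Theorem \ref{th:normalform}. Your ``alternative'' route is essentially the paper's. The only points needing care in a write-up are matching the sign convention for $\nabla e_i$ in terms of $\kappa$ to the paper's convention $\d\alpha_1=-\kappa\wedge\alpha_2$ (so that horizontal lines in $\HH$ come out with $k=+1$), and tracking the angle $\psi$ to order $y$, as you note.
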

\begin{proof}
	By Theorem \ref{th:normalform}, the hyperbolic 0-metric may be written 
	$\gz=\f{1}{g^2}(\d f^2+\d g^2)$
	for functions $f,g$ with $\f{\p f}{\p x}(x,0)>0$ and $g(x,0)=0,\ \f{\p g}{\p y}(x,0)>0$. 
	That is, $(f,g)$ defines a local isometry to $\ol{\HH}$. 
	The image of the curve $t\mapsto (x+t,y)$ under this isometry is the curve $t\mapsto (f(x+t,y),g(x+t,y))$ in $\HH$; its geodesic curvature $k(x,y)$ is computed as 
	\[ k=\frac{f'}{\big((f')^2+(g')^2\big)^{1/2}}+f \f{f' g''-f'' g'}{\big((f')^2+(g')^2\big)^{3/2}},\]
	where the prime denotes $x$-derivatives. Substituting Taylor series 
	\[ f(x,y)\sim \sum_i f_i(x)y^i,\ 
	g(x,y)\sim \sum_i g_i(x) y^i\] 
	one finds, by direct but somewhat lengthy calculation, 
	\[ k(x,y)=1+c(x) y^2+y^3\]
	with 
	\begin{equation}\label{eq:kx}
	c=\f{g_1 g_1''}{(f_0')^2}-\f{g_1 g_1' f_0''}{(f_0')^3}-\hh \f{(g_1')^2}{(f_0')^2}.
	\end{equation}
(The calculation requires writing $f,g$ up to second order, but $f_1,f_2,g_2$ do not enter the final expression.) 
\end{proof}

\begin{theorem}\label{th:hill2}
	The Hill potential is given by 
	\begin{equation}\label{eq:tformula} T=\hh \left(\f{a''}{a}-\f{3}{2}\big(\f{a'}{a}\big)^2\right
	) +\f{a^2}{2} c\end{equation}
	where $c$ is obtained from the limit of the geodesic curvatures of the curves $t\mapsto (x+t,y)$ as 
	$c(x)=\lim_{y\to 0}(k(x,y)-1)/y^2$. 
\end{theorem}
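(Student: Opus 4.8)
The plan is to compare the two formulas for the Hill potential that we have available: formula \eqref{eq:T} from Proposition \ref{prop:Tformula}, expressed through the coefficients $a,s,u$ of an adapted orthonormal coframe, and the target formula \eqref{eq:tformula}, expressed through $a$ and the geodesic-curvature coefficient $c$. Since both purport to compute the same object, it suffices to show that the two right-hand sides agree, i.e. that
\[
-au-\tf14 s^2-\hh\tf{a'}{a}s+\hh s'=\tf{a^2}{2}\,c.
\]
So the real content is to express the coefficient $c$ of Lemma \ref{lem:limit} in terms of the coframe data $a,s,u$ and check this identity. First I would recall that formula \eqref{eq:kx} computes $c$ in terms of the Taylor coefficients $f_i(x),g_i(x)$ of a local isometry $(f,g)\colon U\to\ol\HH$, and that such an isometry is exactly what the normal-form argument of Theorem \ref{th:normalform} produces from a coframe.

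The key step is therefore to relate the coframe coefficients $a,s,u$ to the isometry coefficients $f_0,f_0',g_1,g_1',\dots$. Pulling back the standard coframe $(\tf{\d f}{g},\tf{\d g}{g})$ on $\ol\HH$ under $(f,g)$ gives an adapted orthonormal coframe for $\gz$ on $U$; expanding in powers of $y$, its components are $\alpha_1=\tf1y(a\,\d x+\dots)$ with $a=f_0'/g_1$ at leading order, and similarly $\alpha_2=\tf{\d y}{y}+s\,\d x+\dots$ and $\hh(\alpha_1+\kappa)=y(u\,\d x+\dots)$ produce explicit polynomial expressions for $s$ and $u$ in the $f_i,g_i$. (Any other adapted orthonormal coframe differs by a coframe rotation $R(\varphi)$ with $\varphi=O(y^2)$ by Lemma \ref{lem:adapted}, which changes $a,s$ by terms that do not affect the leading coefficients and changes $u$ only through the stated boundary-vanishing ambiguity; so working with the pullback coframe is no loss of generality.) Substituting these expressions into both sides of the displayed identity and into \eqref{eq:kx}, the claim reduces to a finite polynomial identity in finitely many Taylor coefficients.

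The main obstacle I anticipate is bookkeeping rather than conceptual: one must carry $f$ and $g$ to second order in $y$, extract $a,s,u$ correctly (being careful that $s$ and $u$ involve the subleading coefficients $f_1,g_1,g_2$ while \eqref{eq:kx} involves $g_1,g_1',g_1''$ and $f_0'',$), and verify that all the ``extra'' coefficients — those not appearing in \eqref{eq:kx} — cancel out of the left-hand side, leaving precisely $\tf{a^2}{2}c$. A cleaner route, which I would actually prefer in order to avoid this grind, is to bypass the explicit isometry altogether: compute the geodesic curvature $k(x,y)$ of the curve $t\mapsto(x+t,y)$ directly from the coframe, using that the geodesic curvature of an integral curve of the unit vector field dual to $\alpha_1$ is governed by the spin connection $\kappa$ (Cartan's structure equations), so that $k$ is read off from the coefficient of $\alpha_1$ in $\kappa$ along that curve, together with the reparametrization factor relating arc length to $t$. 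Expanding $\kappa=-\alpha_1+O(y)$ and $\alpha_1=\tf1y(a\,\d x+\dots)$ to the needed order, one gets $k(x,y)=1+c\,y^2+O(y^3)$ with $c$ expressed immediately in $a,s,u$, and then \eqref{eq:tformula} follows by direct substitution into \eqref{eq:T}. Either way the theorem is a computation; the only thing to be careful about is that the orders of vanishing in the ``$\dots$'' terms of the coframe are tracked consistently so that no $O(y^3)$ data leaks into the $O(y^2)$ conclusion.
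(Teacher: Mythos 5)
Your main route is essentially the paper's proof: pull back the standard coframe on $\ol\HH$ under the local isometry $(f,g)$ furnished by Theorem \ref{th:normalform}, read off $a,s,u$ from its Taylor expansion, and compare with \eqref{eq:kx}. Two small remarks. First, you miss the simplification that for this particular coframe $\alpha_1+\kappa=0$ \emph{identically} (the standard coframe on $\HH$ has $\kappa=-\alpha_1$ exactly), so $u=0$, $a=f_0'/g_1$, $s=g_1'/g_1$, and the "bookkeeping" you worry about collapses to the single identity $c=a^{-2}\big(s'-\f{a'}{a}s-\hh s^2\big)$, after which \eqref{eq:T} gives \eqref{eq:tformula} at once. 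Second, your parenthetical justifying the choice of coframe is both unnecessary and inaccurate: unnecessary because Proposition \ref{prop:Tformula} yields the same $T$ for every adapted coframe, so verifying the identity for one coframe suffices; inaccurate because two adapted coframes may differ by a rotation with $\varphi=O(y)$ rather than $O(y^2)$, under which $s$ does change at leading order — only the full combination in \eqref{eq:T} is coframe-invariant. Neither point is a gap; the argument as outlined is sound and coincides with the paper's.
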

\begin{proof}
	Continuing the notation from the proof of Lemma \ref{lem:limit},  we may take 
	$\alpha_1=\f{\d f}{g},\ \alpha_2=\f{\d g}{g}$
	as an adapted orthonormal coframe. Using the Taylor expansion of $f,g$, we find 
	\[ \alpha_1=\f{1}{y} (\f{f_0'}{g_1}\d x+\ldots),\ \ \ \alpha_2=\f{\d y}{y}+\f{g_1'}{g_1}\d x+\ldots,\ \ \alpha_1+\kappa=0\]
	where dots indicate terms that pull back to zero on the boundary $y=0$. Hence, the functions $a,s,u$ are given by 
	\[ a=\f{f_0'}{g_1},\ \ s=\f{g_1'}{g_1},\ \ u=0.\]
	Using the formula \eqref{eq:kx} for $k(x)$, this gives 
	\[ c=\f{1}{a^2} \Big(s'-\f{a'}{a}s-\f{1}{2}s^2\Big).\]
	Now use \eqref{eq:T}.  
\end{proof}


\subsection{Verifying the moment map condition}
We are now in position to describe the moment map for the $\wt{\on{Diff}}_\oz(\pS)$-action on the infinite-dimensional Teichm\"uller space. 
\begin{theorem}\label{th:momentmap}
The action of $\wt{\on{Diff}}_\oz(\pS)$ on $\on{Teich}(\Sigma)$ is Hamiltonian, with moment map 
\[ \Phi\colon \on{Teich}(\Sigma)\to \mf{vir}^*_{-1}(\p\Sigma),\ \ [\gz]\mapsto -L\]
taking the equivalence class of a hyperbolic structure to minus the Hill operator for the associated projective structure on the boundary. 
\end{theorem}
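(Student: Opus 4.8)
The plan is to assemble the moment map statement from the pieces already established, treating $\Phi$ as the composite of the natural map $\on{Teich}(\Sigma)\to\on{Proj}(\pS)$ with the canonical identification $\on{Proj}(\pS)\cong\on{Hill}(\pS)$ from \eqref{eq:hillproj}, and then checking the Hamiltonian condition against the symplectic structure coming from Theorem \ref{th:teichhat}. First I would recall the setup of Theorem \ref{th:teichhat}: we realized $\on{Teich}(\Sigma)=(\wh{\on{Teich}}(\Sigma)\times\O^-)\qu\Gau(Q,\tau)$, where $\O=\A^{\on{pos}}(Q,\tau)$ is a single coadjoint orbit of $\Gau(Q,\tau)$ (Lemma \ref{lem:ds}), and $\wh{\on{Teich}}(\Sigma)$ carries the $\Gau(Q,\tau)$-moment map $\Psi([\theta])=\p\theta\bmod\on{ann}(\gau(Q,\tau))$. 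The residual action of $\wt{\Diff}_\oz(\pS)$ arises because $\Diff_+(\CC)$ splits into $\Aut_+(Q,\tau)$ via the Drinfeld--Sokolov section \eqref{eq:dssplitting}, normalizing $\Gau(Q,\tau)$ inside $\Aut_+(Q,\tau)$; so after the symplectic quotient by $\Gau(Q,\tau)$ we get a residual Hamiltonian action of $\Aut_+(Q,\tau)/\Gau(Q,\tau)\cong\Diff_+(\CC)$, whose moment map is obtained by restricting $\Psi$ (plus the tautological moment map on $\O^-$) along the splitting and identifying the result with an element of $\mf{vir}^*$.

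The key computation is then to identify this restricted moment map. The Drinfeld--Sokolov slice $\ca Z\subset\A^{\on{pos}}(Q)$ of \eqref{eq:dsslice} is a global section of the $\Gau(Q,\tau)$-action on $\A^{\on{pos}}(Q,\tau)$, so every point of the reduced space $\on{Teich}(\Sigma)$ has a unique representative $[\theta]$ with $\p\theta$ in Drinfeld--Sokolov normal form \eqref{eq:drinfeldsolkolovnormalform}, i.e. of the form $\begin{pmatrix}0&1\\-T&0\end{pmatrix}\d x$ for a Hill potential $T$; that $T$ is exactly the boundary Hill potential of the hyperbolic metric, as computed in Proposition \ref{prop:Tformula} and Theorem \ref{th:hill2}. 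Under the splitting \eqref{eq:dssplitting}, a diffeomorphism $\FF$ acts on this slice by $T\mapsto\FF^{-1}\cdot T$ via the affine formula \eqref{eq:hilltransformation} with the Schwarzian cocycle. Since $\on{Hill}(\pS)=\mf{vir}^*_1(\pS)$ with this affine $\Diff_+$-action being the coadjoint action at level $1$ (Appendix \ref{app:A1} and \S\ref{subsec:hill}), the pairing of $\Psi$ (transported to the slice) against a vector field $v\in\Vect(\pS)$ reproduces the canonical moment map pairing for the coadjoint action, up to the sign introduced by passing to $\O^-$ and by the shifting trick; that sign is precisely what turns $L$ into $-L$ and lands the moment map in $\mf{vir}^*_{-1}(\pS)=\on{Hill}(\pS)^-$. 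Equivariance is automatic since the whole construction is natural in $(Q,\tau)$.

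I expect the main obstacle to be bookkeeping of the central extension and the signs, rather than any single hard estimate. Concretely, one must verify that the affine/cocycle term in \eqref{eq:hilltransformation} is the Gelfand--Fuks cocycle defining $\mf{vir}(\pS)$ at the correct level, and that the Atiyah--Bott boundary term of the moment map (Appendix \ref{app:AB}) contributes exactly the Hill pairing $\int_{\pS}$ of $T$ against a vector field with the right normalization; the factor $\hh$ in \eqref{eq:hilltransformation}, the choice of the metric $\xi\cdot\eta=\on{tr}(\xi\eta)$ on $\mf{sl}(2,\R)$, and the passage through $\O^-$ all feed into this. A secondary subtlety is that the acting group is the \emph{universal cover} $\wt{\Diff}_\oz(\pS)$: one must check that the covering group $\bb\Diff_\oz(\Sigma)/\zz\Diff_\oz(\Sigma)$ of $\Diff_\oz(\pS)$ acts compatibly with the splitting \eqref{eq:dssplitting} (which is defined on $\Diff_+$ of the \emph{1-manifold}), so that the moment map is well-defined and equivariant on the Teichm\"uller space; this is where the lifting statement of Proposition \ref{prop:restrictionsurjective} and the structure of $\bb\Aut_\oz(P,\sigma)$ are used. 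Once these normalizations are pinned down, the proof is a direct assembly of Lemmas \ref{lem:psi}, \ref{lem:ds}, Theorem \ref{th:teichhat}, and Proposition \ref{prop:Tformula}.
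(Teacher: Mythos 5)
Your proposal follows essentially the same route as the paper: represent $\on{Teich}(\Sigma)$ via Theorem \ref{th:teichhat} with $(Q,\tau)$ Segal's bundle, use the Drinfeld--Sokolov splitting \eqref{eq:dssplitting} and the fact (Lemma \ref{lem:ds}) that the DS slice descends to a single point $\mu_0\in\O$ with trivial stabilizer, and read off the moment map on the slice $a=1,\ s=0,\ u=-T$. One small correction: the minus sign in $[\gz]\mapsto -L$ does \emph{not} come from $\O^-$ or the shifting trick --- in the paper's argument $\O^-$ drops out entirely, since triviality of the stabilizer of $\mu_0$ makes $\on{Teich}(\Sigma)=\Psi^{-1}(\mu_0)$ a symplectic submanifold of $\wh{\on{Teich}}(\Sigma)$ and one simply restricts the $\wt{\Diff}_\oz$-moment map there; the sign emerges from the explicit Atiyah--Bott boundary-term computation \eqref{eq:hatmoment}, whose value $-\tfrac12 s'+\tfrac14 s^2+au-\tfrac12 a''$ reduces to $u=-T$ on the slice. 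That coordinate computation (pairing $(A,\tfrac12\on{tr}(A^2))$ against the infinitesimal DS splitting) is the one genuinely computational step, which you correctly identify but defer as bookkeeping.
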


\begin{remark}
We obtain a moment map at level $-1$ due to our specific choice of metric $\xi\cdot\eta=\on{tr}(\xi\eta)$ on $\g$. Multiplying the metric by a nonzero factor,  the symplectic form and 
moment map (and in particular its level) scale accordingly. 	
\end{remark}

Our starting point is the description (Theorem \ref{th:teichhat})
\[
 \on{Teich}(\Sigma)=(\wh{\on{Teich}}(\Sigma)\times \O^-)\qu \Gau(Q,\tau),\]
 where $(Q,\tau)$  is the boundary restriction of $(P,\sigma)$. 
 We shall take this 
 boundary restriction to be Segal's bundle from Section \ref{subsec:hill}. Denote $\CC=\pS$.

The action of $\wt{\Diff}_\oz(\CC)$ is obtained as a quotient of the action of (a cover of) $\Aut_\oz(Q,\tau)$  
on both spaces,  $\wh{\on{Teich}}(\Sigma)$ and $\O$. Recall that for Segal's bundle, there is a canonical splitting  $\Diff_\oz(\CC)\to \Aut_\oz(Q,\tau)$.  This lifts to the universal  covering. Hence, we may compute the $\Diff_\oz(\CC)$-part of the  moment map on both spaces.

The choice of a  a coordinate  $x$ on the boundary gives a trivialization $Q=\CC\times G$. In terms of this trivialization, the connection $\partial\theta$ is described by a connection 1-form $A$ as in \eqref{eq:connectiononboundary}. 

\begin{proposition}
The moment map for the $\wt{\Diff}_\oz(\pS)$-action on $\wh{\on{Teich}}(\Sigma)$ is given in coordinates by 
\begin{equation} \label{eq:hatmoment}
\wh{\on{Teich}}(\Sigma)\to |\Omega|^2_{\pS},\ \ 
[\theta]\mapsto \big(-\f{1}{2} s' + \f{1}{4} s^2+au-\f{1}{2} a'')\ |\d x|^2.\end{equation}
Here the functions $a,u,s$ are defined by \eqref{eq:connectiononboundary}. 
\end{proposition}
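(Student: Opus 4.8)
The plan is to derive \eqref{eq:hatmoment} by tracking the boundary term of the Atiyah--Bott moment map through the reduction $\wh{\on{Teich}}(\Sigma)=\A^{\on{pos}}(P)\qu\Aut_\oz(P,Q,\sigma)$, and then restricting the resulting $\Gau(Q,\tau)$-moment map along the canonical splitting $\Diff_\oz(\CC)\to\Aut_+(Q,\tau)$ of Section \ref{subsec:hill}. As established in the earlier lemma, the moment map for the residual $\Gau(Q,\tau)$-action on $\wh{\on{Teich}}(\Sigma)$ is $\Psi\colon[\theta]\mapsto\p\theta\bmod\on{ann}(\gau(Q,\tau))$, i.e. the class of the boundary connection 1-form $A$ of \eqref{eq:connectiononboundary} inside $\A(Q,\tau)=\A(Q)/\on{ann}(\gau(Q,\tau))$. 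Since $\on{ann}(\gau(Q,\tau))\cong\Omega^1(S^1,\mf{n}^-)$ corresponds to the lower-left corner, the class of $A$ is recorded by the pair of functions $(a,s)$. The $\wt\Diff_\oz(\CC)$-moment map is then the pullback of $\Psi$ under the splitting: precompose the coadjoint pairing $\langle A,\,\cdot\,\rangle$ on $\gau(Q,\tau)$ with the map $\Vect(\CC)\to\gau(Q,\tau)$ induced on the infinitesimal level by \eqref{eq:dssplitting}, and identify the result with a quadratic differential via the trivialization $|dx|^2$.

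Concretely, I would carry this out in the coordinate trivialization $Q=\CC\times G$ as follows. First, write out the infinitesimal version of the splitting $\Diff_+(\CC)\to\Aut_+(Q,\tau)$: differentiating \eqref{eq:h} at the identity, a vector field $v=v(x)\,\partial_x$ lifts to a section of $\At(Q,\tau)$ whose image in $\gau(Q)=C^\infty(\CC,\g)$ is the lower-triangular matrix-valued function
\[
\eta_v=\begin{pmatrix} -\tfrac12 v' & 0\\ \tfrac12 v'' & \tfrac12 v'\end{pmatrix}+v\,\partial_x,
\]
the $\partial_x$-term being the horizontal part; equivalently, modulo $\gau$, it is the vector field $v$ on the base. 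Second, pair this with $A=\begin{pmatrix}\tfrac12 s & a\\ u & -\tfrac12 s\end{pmatrix}\d x$ using $\omega_{AB}$'s boundary pairing $\langle A,\eta\rangle=\int_{\CC}\on{tr}(A\cdot\eta)$ together with the contribution of the horizontal $v\,\partial_x$ piece, which produces the Lie-derivative term $\iota_v\d A$ or equivalently the familiar affine cocycle contribution. Third, integrate by parts to move all derivatives off $v$ and collect the coefficient of $v(x)$; this coefficient, as a $1$-density paired against $\Vect(\CC)=|\Omega|^{-1}_\CC$, is the desired quadratic differential, and the straightforward bookkeeping should yield precisely $\big(-\tfrac12 s'+\tfrac14 s^2+au-\tfrac12 a''\big)|\d x|^2$.

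The main obstacle I anticipate is the affine (cocycle) part: getting the normalization and sign of the central term right, so that the $-\tfrac12 a''$ summand emerges with the correct coefficient. This requires being careful about (i) the conventions in Appendix \ref{app:AB} for the boundary term of the Atiyah--Bott moment map (whether it is $\p\theta$ or $-\p\theta$, and how the affine shift is normalized), (ii) the fact that the splitting \eqref{eq:dssplitting} is equivariant for the \emph{affine} action on the Drinfeld--Sokolov slice, so the pullback picks up the Schwarzian-type cocycle, and (iii) consistency with the chosen metric $\xi\cdot\eta=\on{tr}(\xi\eta)$, which is what fixes the overall level. One clean way to sidestep part of the bookkeeping is to first compute the moment map \emph{on the Drinfeld--Sokolov slice} $\ca Z\subset\A^{\on{pos}}(Q)$, where $a=1,s=0$ and \eqref{eq:drinfeldsolkolovnormalform} applies, recovering there the known Schwarzian $-\tfrac12\S(\FF)$ from \eqref{eq:hilltransformation}; then use $\Gau(Q,\tau)$-equivariance (Lemma \ref{lem:ds}) to transport the answer to a general class $(a,s)$, which is exactly the gauge transformation $h$ written in the proof of Lemma \ref{lem:ds}. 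Either route reduces the claim to a finite, if somewhat tedious, computation with $2\times2$ matrices and integration by parts on $S^1$.
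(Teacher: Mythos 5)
Your strategy is the one the paper uses: reduce in stages, take the boundary term of the Atiyah--Bott moment map, pair it with the infinitesimal Drinfeld--Sokolov splitting, and integrate by parts. Two concrete points, however, would derail the computation as written. First, your matrix $\eta_v=\bigl(\begin{smallmatrix}-v'/2 & 0\\ v''/2 & v'/2\end{smallmatrix}\bigr)$ is the \emph{negative} of the lift actually needed: the paper's conventions (the flow of $f\,\partial_x$ being $x-tf(x)+O(t^2)$, the identification $\gau(Q)\cong C^\infty(\CC,\g)$ of Remark \ref{rem:signs1}, and the fact that \eqref{eq:h} is the lift of $\FF^{-1}$ rather than $\FF$) combine to give $f\,\partial_x\mapsto\bigl(\begin{smallmatrix}f'/2 & 0\\ -f''/2 & -f'/2\end{smallmatrix}\bigr)$. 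With your sign the derivative terms come out as $+\tfrac12 s'+\tfrac12 a''$ rather than $-\tfrac12 s'-\tfrac12 a''$, so this is not merely a bookkeeping worry you can defer --- it changes the answer. You correctly flagged conventions as the crux, but the resolution has to be carried out.

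Second, the source of the quadratic terms $\tfrac14 s^2+au$ is not ``$\iota_v\d A$'' (on a $1$-manifold $[A,A]=0$ and there is no such Lie-derivative contribution); it is the $|\Omega|^2_\CC$-component $\tfrac12\on{tr}(A^2)$ of the $\E(\p P)$-valued boundary moment map $\theta\mapsto\tfrac12\,s^{\p\theta}\stackrel{.}{\vee}s^{\p\theta}$ from Appendix \ref{app:AB}, evaluated on the horizontal part $f\,\partial_x$ of the lift. Once these two points are fixed, the pairing is $\int_{S^1}\bigl(\tfrac12 sf'-\tfrac12 af''+(\tfrac14 s^2+au)f\bigr)$ and integration by parts gives exactly \eqref{eq:hatmoment}. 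Your proposed shortcut via the Drinfeld--Sokolov slice is closer to how the paper proves Theorem \ref{th:momentmap} (restriction to $\Psi^{-1}(\mu_0)$) than to the proof of this Proposition; transporting from the slice to general $(a,s,u)$ by equivariance requires tracking how the $\Vect(\CC)$-component of the moment map mixes with the $\gau(Q,\tau)$-component under the semidirect-product coadjoint action, which is no simpler than the direct computation.
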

\begin{proof}
The boundary term of the moment map is given by $[\theta]\mapsto (A,\hh \on{tr}(A^2))\in \Omega^1(\CC,\g)\times |\Omega|^2_\CC$; see Appendix \ref{app:AB}. 
On the other hand, the coordinate expression of the inclusion 
$\Vect(\CC)\to \gau(Q,\tau)\rtimes \Vect{\CC}$ 
is given  by the infinitesimal version of \eqref{eq:h}\footnote{With our sign conventions, the flow of the vector field 
	$ f\,\f{\p}{\p x}$ is of the form $F_t(x)=x-t f(x)+O(t^2)$. An additional sign arises comes from the choice of identification $\gau(Q)\cong C^\infty(\CC,\g)$ used in appendix \ref{app:AB}.}
\[ f\,\f{\p}{\p x}\mapsto 
\left(\left(\begin{array}{cc} \hh f' &0\\ -\f{1}{2}f''  & -\f{1}{2} f'
\end{array}\right),\ f\,\f{\p}{\p x}\right).\]
Using the expression \eqref{eq:connectiononboundary} for $A$, the corresponding component of the moment map is 
\[ \int_{S^1} \on{tr}\left( \left(\begin{array}{cc}
\hh s & a\\ u & -\hh s
\end{array}
\right) 
\,\left(\begin{array}{cc}\hh f' & 0\\ -\hh f'' & -\hh f'
\end{array}\right)\right)\ +\hh \int_{S^1} \on{tr}\left(\begin{array}{cc}
\hh s & a\\ u & -\hh s
\end{array}
\right)^2 f\]
\[ =\int_{S^1}\left(\hh sf'-\hh af''+\f{1}{4}sf+auf
\right)
=\int_{S^1} \left(- \hh s'-\hh a''+\f{1}{4} s^2+au
\right)f. \qedhere \]
\end{proof}

We recognize some, but not all, of the terms in the formula \eqref{eq:T} for the Hill potential. One expects to obtain the 
remaining terms from a calculation of the $\wt{\Diff}_\oz(\CC)$-moment map on $\O$. Through explicit calculation, we checked  that this is indeed the case, thereby obtaining a proof of Theorem \ref{th:momentmap}. However, there is a much simpler argument, using the Drinfeld-Sokolov slice: 

\begin{proof}[Proof of Theorem \ref{th:momentmap}]
	Recall that the moment map $\Psi\colon \wh{\on{Teich}}(\Sigma)\to \A(Q,\tau)$ is given by $[\theta]\mapsto 
	\p\theta\mod \on{ann}\gau(Q,\tau)$, and the set of all $\p\theta\mod \on{ann}\gau(Q,\tau)$ is a single coadjoint orbit $\O=\A^{\on{pos}}(Q,\tau)$.
	The Drinfeld-Sokolov slice $\ca{Z}\subset \A^{\on{pos}}(Q)$ 
	descends to a slice for the $\Gau(Q,\tau)$-action on $\O=\A^{\on{pos}}(Q)/\on{ann}(Q,\tau)$, consisting of just a single point, 
	$\mu_0\in \O$, and the stabilizer of this point under $\Gau(Q,\tau)$ is \emph{trivial}. (See Lemma \ref{lem:ds}.) 
	Letting 
	$\mu_0$ be the corresponding point in $\O$, we have 
	\[ \on{Teich}(\Sigma)=\Psi^{-1}(\mu_0)\subset \wh{\on{Teich}}(\Sigma)\]
	as a symplectic submanifold. The moment map for the $\wt{\Diff}_\oz(\CC)$-action on $\on{Teich}(\Sigma)$ may be computed by restricting the moment map to this cross-section. 
	
	Using coordinates, as above,  $\ca{Z}$ is given by  $a=1,s=0,u=-T$ where $T$ is the Hill potential. (The point 
	$\mu_0\in \O$ is the point given by $a=1,s=0$). Hence, on $\Psi^{-1}(\mu_0)$ the moment map restricts to 
	 $[\theta]\mapsto -T$.  
\end{proof}

\section{The symplectic form in Fenchel-Nielsen coordinates}\label{sec:fn}

\subsection{Fenchel-Nielsen parameters}\label{subsec:fn}
Let $\Sigma$ be a compact, connected, oriented surface (possibly with boundary), of negative Euler characteristic 
$\chi(\Sigma)<0$. 
The construction of Fenchel-Nielsen parameters on $\on{Teich}(\Sigma)$  
for surfaces without boundary is well-explained in 
\cite{far:pri}; we describe a straightforward generalization to the case of a possibly non-empty boundary. 

Recall first that every simple, closed curve $\mathsf{D}\subset \Sigma$, neither contractible nor homotopic to a boundary 
component, determines a \emph{twist flow} $\R\times \on{Teich}(\Sigma)\to \on{Teich}(\Sigma)$: 
Given $\gz$, one obtains a new metric $\gz_\tau$ by cutting the surface along the geodesic homotopic to $\mathsf{D}$, 
and gluing  the two sides back together after rotating (twisting) one of the ends by an amount $\tau$. 

\begin{remark}
A more detailed description:  Given $[\gz]\in \on{Teich}(\Sigma)$, choose a representative $\gz$ 
having $\mathsf{D}$ as a closed geodesic.  A collar neighborhood $U$ of $\mathsf{D}$ is isometric to a neighborhood of the geodesic of a hyperbolic cylinder (see \ref{subsubsec:cylinder}), and so is isometric to $\mathsf{D}\times (-\epsilon,\epsilon)$ with the hyperbolic metric \eqref{eq:doubletrumpet}. For any $\tau\in\R$, we obtain a new hyperbolic metric $\gz_\tau$ by letting $\gz_\tau|_{\Sigma-U}=\gz|_{\Sigma-U}$ and taking 
$\gz_\tau|_U$ to be the pullback of $\gz|_U$ under the  diffeomorphism 
\begin{equation}\label{eq:twistflow} (x,u)\mapsto \big(x+\f{\tau}{\ell} f(u),u\big),\end{equation} 
where $f(u)=0$ for $u<-\f{1}{2}\epsilon$ and $f(u)=1$ for $u>\hh \epsilon$. 
The twist flow is given by $[\gz]\mapsto [\gz_\tau]$. 
\end{remark}

Since $\chi(\Sigma)<0$, we may choose a pairs-of-pants decomposition of $\Sigma$. There are $2g-2+r=-\chi(\Sigma)$ distinct pants; their boundary curves consist of the boundary loops 
$\CC_j,\ j=1,\ldots,r$ of $\Sigma$ and $3g-3+r$ simple closed curves $\mathsf{D}_i\subset \on{int}(\Sigma)$. 

Each of the $\mathsf{D}_i$ defines a length parameter $\ell_i>0$ (the length of the unique closed geodesic homotopic to $\mathsf{D}_i$), as well as a twist flow. 
In addition, each boundary component $\CC_j$ determines a length parameter $\ell_j$ (given by the length of the unique geodesic 
of $[\gz]$ homotopic to the ideal boundary $\CC_j$) as well as an action of $\wt{\Diff}_\oz(\CC_j)$ (coming from the action of diffeomorphisms in $\bb\Diff_\oz(\Sigma)$ that are supported in collar neighborhoods of the $\CC_j$). These actions 
on $\on{Teich}(\Sigma)$ all commute, with quotient $\R_{>0}^{3g-3}\times \R_{>0}^r$ given by the length parameters.   
This action has a global slice, determined by the choice of a system of \emph{model seams}. Choose  an embedded 1-dimensional submanifold  $E\subset  \Sigma$  
with boundary $\p E\subset \p\Sigma$, in such a way for any two distinct boundary circles of a given pants $P$, 
there is a unique component of $P\cap E$ connecting those two boundary components. We also assume that $P\cap E$ meets these 
boundary components transversely. The three components of $P\cap E$ are the model seams for the pair of pants $P$. 
Finally, choose orientation preserving parametrizations $\mathsf{C}_j\cong S^1$, such that 
$\p E\cap \mathsf{C}_j$ maps to the antipodal points $\{-1,1\}\in S^1\subset \C$.   

 The desired slice consists of all $[\gz]\in \on{Teich}(\Sigma)$, where $\gz$ is a hyperbolic 0-metric such that  
 (i) all $\mathsf{D}_i$ are geodesics, (ii) the connected components of $E-\p E\subset \on{int}(\Sigma)$ are geodesics, (iii) the projective structure on the boundary components $\CC_j$ is constant (i.e., $S^1$-equivariant). This gives 
 an identification 
\begin{equation}\label{eq:FN1}
\on{Teich}(\Sigma)\cong (\R_{>0}\times \R)^{3g-3+r}\times \prod_{j=1}^r (\R_{>0}\times \wt{\Diff}_\oz(S^1))
\end{equation}
with the slice given as $(\R_{>0}\times 0)\times \prod_{j=1}^r (\R_{>0}\times \on{Id})^r$. Denote the corresponding 
parameters by $\ell_i,\tau_i,\ell_j,\FF_j$; we choose the parametrization in such a way that the $i$-th twist flow is given by 
$\tau_i\mapsto \tau_i+\tau$ (leaving all other parameters unchanged) and the $j$-th action of $\FF\in \wt{\Diff}_\oz(S^1)$ 
is given by $\FF_j\mapsto \FF_j\circ \FF^{-1}$ (leaving all other parameters unchanged).

\subsection{Related Teichm\"uller spaces}
Given a hyperbolic 0-metric $\gz$ on $\Sigma$, each boundary component $\CC_j$ determines  a unique simple, closed geodesic $\CC_j'\subset \Sigma$ homotopic to $\CC_j$; this is the geodesic end of the $j$-th boundary trumpet.  Removing the trumpets creates a surface 
$\Sigma'$ with geodesic boundary $\sqcup_{j}\CC_j'$, called the \emph{compact core} of $\Sigma$. Of course, $\Sigma'$ is diffeomorphic to $\Sigma$
(as a surface with boundary). 
The map \begin{equation}\label{eq:principalbundle}
\on{Teich}(\Sigma)\lra  \on{Teich}_{\on{geod}}(\Sigma),\ \ \ 
\end{equation}
taking the equivalence class of a hyperbolic 0-metric on $\Sigma$ to the equivalence class of the (ordinary) hyperbolic metric on $\Sigma$ with \emph{geodesic boundary}, is the quotient maps for the action of $\prod_{i=1}^r \wt{\Diff}_\oz(\CC_i)$. The corresponding Fenchel-Nielsen description just omits the $\wt{\Diff}_\oz(S^1)$-factors in \eqref{eq:FN1}. Fixing the lengths $b_j$ of the boundary components, one obtains the space $\on{Teich}_{\on{geod}}(\Sigma,b_1,\ldots,b_r)$.
As another variation, having chosen parametrizations $\CC_j\cong S^1$ of the boundary components, 
we may consider the subspace 
\[ \on{Teich}_{\on{bordered}}(\Sigma)\subset \on{Teich}(\Sigma)\] 
for 
which the projective structure on each $\CC_j$ is `constant' (i.e., invariant under rigid rotations). On this subspace, we have a residual action of $\R^r$, where the $j$-th copy of $\R$ rotates the $j$-th boundary component. This version of the Teichm\"uller space may be interpreted as a space of hyperbolic metrics with geodesic boundary, together with a `marking' on each boundary component. 
This space of `bordered' hyperbolic metrics  appears in Mirzakhani's work, see \cite[Section 4]{mir:wei}.
	The Fenchel-Nielsen description becomes 
	\[  \on{Teich}_{\on{bordered}}(\Sigma)\cong (\R_{>0}\times \R)^{3g-3+2r}.\] 
	One can also consider mixtures of such spaces, e.g., taking some boundary components to be ideal boundaries (with $\varrho^{-2}$-boundary behaviour of the metric), other boundaries as  marked
	geodesic boundaries.

\subsection{Teichm\"uller space of the trumpet}
Let $\wt{N}$ denote the Teichm\"uller space of hyperbolic 0-metrics on $ S^1\times [-\infty,\infty] $, such that the induced projective structure on the left boundary $S^1\times \{-\infty\}$ is constant, and denote by $N$ the corresponding Riemann moduli space.  
As explained above, $N$ may also be regarded as a moduli space of hyperbolic metrics on $S^1\times [0,\infty)$ for which $S^1\times \{0\}$ is a geodesic, of some length $\ell>0$, while $\gz$ has the boundary behaviour of a 0-metric along the ideal boundary $S^1\times \{\infty\}$. 
As a space, 
\begin{equation}\label{eq:trumpet} 
 N=\R_{>0}\times \Diff_\oz(S^1)\end{equation}
where the $\R_{>0}$ factor indicates the length $\ell$ of the geodesic boundary. 
This space comes with an action of 
$\Diff_\oz(S^1)$ by $\FF_1\cdot (\ell,\FF)=(\ell,\FF\circ \FF_1^{-1})$ 
and an action of $S^1=\R/\Z$ by $t\cdot  (\ell,\FF)\mapsto (\ell,\FF+t)$. (Here multiplication on $S^1=\R/\Z$ is written additively.) 
The following result describes the symplectic structure on $N$; the 2-form on $\wt{N}$ is obtained by pullback. (For a more conceptual explanation of the formula, see Appendix \ref{app:groupoid}.) 

For $\ell>0$, we have the Hill operator $L(\ell)=\f{d^2}{d x^2}+T(\ell)$ with the constant Hill potential 
$T(\ell)=-\f{1}{4}\ell^2\in \on{Hill}(S^1)$.

\begin{theorem}[Trumpet] \label{th:trumpet}
	The space \eqref{eq:trumpet} has a unique invariant symplectic form $\omega_N$, in such a way that the 
	$\Diff_\oz(S^1)$ is Hamiltonian, with moment map $(\ell,\FF)\mapsto -\FF^{-1}\cdot L(\ell)\in \on{Hill}(S^1)$. This 2-form is given by 
	the formula 
	\begin{equation}\label{eq:omegan}
	\omega_N=-\f{1}{4}\d \int_{S^1}\big(\ell^2\FF'\,\d\FF +(\FF')^{-1}\d\FF''
	\big)
	\end{equation}
	The $S^1$-action is Hamiltonian as well, with moment map $(\ell,\FF)\mapsto \f{1}{4}\ell^2$. The symplectic quotient at $\f{1}{4}\ell^2$ for the latter action is the coadjoint Virasoro orbit through $L(\ell)$ (with the opposite symplectic structure). 
\end{theorem}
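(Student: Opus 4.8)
The plan is to identify $N=\R_{>0}\times\Diff_\oz(S^1)$ with a coadjoint-orbit-like object and compute everything by restriction from $\wh{\on{Teich}}$. First I would reduce to the case of the double trumpet $\ol\AA$ of Section \ref{subsubsec:cylinder}: the description \eqref{eq:trumpet} of $N$ coincides with one $\R_{>0}\times\Diff_\oz(S^1)$-factor of the Fenchel--Nielsen picture \eqref{eq:FN1} when $g=0,\,r=2$ with no gluing circles (take the compact core to be a fixed hyperbolic cylinder with its two geodesic ends), and the parameter $\FF\in\Diff_\oz(S^1)$ records the projective structure on one ideal boundary obtained by acting on the constant projective structure of the standard cylinder metric \eqref{eq:doubletrumpet}. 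The point $\mu_0\in\O$ of the Drinfeld--Sokolov slice (proof of Theorem \ref{th:momentmap}) for the constant Hill potential $T(\ell)=-\tf14\ell^2$ sits at $a=1,\,s=0$, and by Lemma \ref{lem:ds} the $\Gau(Q,\tau)$-action on $\O=\A^{\on{pos}}(Q,\tau)$ is free and transitive, so the reduction $\on{Teich}(\ol\AA)=\Psi^{-1}(\mu_0)$ identifies $N$ with a submanifold of $\wh{\on{Teich}}(\ol\AA)$ on which the moment map for $\wt\Diff_\oz(S^1)$ is, by the end of the proof of Theorem \ref{th:momentmap}, exactly $(\ell,\FF)\mapsto -\FF^{-1}\cdot L(\ell)$. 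This fixes the moment map and hence the symplectic form up to the piece annihilated by the $\Diff_\oz(S^1)$-coadjoint action, i.e. up to a function of $\ell$; the claimed formula \eqref{eq:omegan} then has to be verified directly.

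The key verification step is to check that the right-hand side $\omega_N=-\tf14\,\d\!\int_{S^1}\!\big(\ell^2\FF'\,\d\FF+(\FF')^{-1}\d\FF''\big)$ is (i) closed (automatic, being $\d$ of a $1$-form on $\R_{>0}\times\Diff_\oz(S^1)$), (ii) non-degenerate, and (iii) has the asserted moment maps. For the moment map of the $\Diff_\oz(S^1)$-action $\FF\mapsto\FF\circ\FF_1^{-1}$: contract $\omega_N$ with the fundamental vector field of $f\tf{\p}{\p x}\in\Vect(S^1)$ and recognize the result as $\d$ of the pairing of $f$ with $-\FF^{-1}\cdot T(\ell)$, using the transformation law \eqref{eq:hilltransformation} for Hill potentials under $\FF$ — this is the standard Kirillov--Kostant--Souriau computation for the Virasoro coadjoint orbit, now with the extra $\ell$-dependence carried along. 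The second term $(\FF')^{-1}\d\FF''$ in the primitive is precisely the Schwarzian $1$-form whose differential reproduces the cocycle $\tf12\S(\FF)$; the first term $\ell^2\FF'\,\d\FF$ reproduces the $\FF'(x)^2T(\FF(x))$ piece. For the $S^1$-action $\FF\mapsto\FF+t$: contract with $\tf{\p}{\p t}$; since $\int_{S^1}\FF'\,\d\FF$ and $\int_{S^1}(\FF')^{-1}\d\FF''$ both transform by a total derivative under $\FF\mapsto\FF+t$, only the $\d\ell$-component survives and one reads off the moment map $\tf14\ell^2$. Finally, the symplectic quotient at $\tf14\ell^2$ is $\{$fixed $\ell\}\times\Diff_\oz(S^1)$ modulo $S^1$, which is the model for the hyperbolic Virasoro orbit through $L(\ell)$, and the $\d\ell=0$ restriction of $\omega_N$ is $-\tf14\d\!\int_{S^1}\big(\ell^2\FF'\,\d\FF+(\FF')^{-1}\d\FF''\big)$ at fixed $\ell$, which is the (opposite of the) KKS form on that orbit by the construction in \cite{al:bos}.

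Non-degeneracy and invariance deserve a separate remark. Invariance under $\Diff_\oz(S^1)$ and $S^1$ follows once the moment maps are known, since $\omega_N$ is exact and the groups are connected, so $\L_\xi\omega_N=\d\iota_\xi\omega_N$ is exact and in fact zero by the moment map identities; uniqueness of the invariant symplectic form with the stated $\Diff_\oz(S^1)$-moment map follows because the difference of two such forms is a closed invariant $2$-form whose contraction with every $\Diff_\oz(S^1)$-direction vanishes, hence (the $\Diff_\oz(S^1)$-orbits being the level sets of $\ell$) it is the pullback of a $2$-form from $\R_{>0}$, which is $0$ for dimension reasons. For non-degeneracy one checks that the kernel of $\omega_N$ at a point is trivial: along the $\Diff_\oz(S^1)$-directions this is the faithfulness of the coadjoint action on the hyperbolic orbit (no nontrivial stabilizer in $\Vect(S^1)$ for $T=-\tf14\ell^2$ with $\ell>0$ generic, and in general the stabilizer is compact so is killed by the KKS form), and the $\d\ell$-direction pairs nontrivially with the $S^1$-generator since $\tf{d}{d\ell}(\tf14\ell^2)\neq0$.

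\emph{Remark.} The main obstacle is bookkeeping: matching signs and factors between the coframe data $a,s,u$ of Proposition \ref{prop:Tformula}, the Drinfeld--Sokolov normalization \eqref{eq:drinfeldsolkolovnormalform}, the splitting \eqref{eq:h}, and the primitive in \eqref{eq:omegan}. None of the individual steps is deep — closedness is free, the moment-map computations are KKS-type integrals by parts — but getting the overall normalization of $\omega_N$ to land exactly on $-\tf14$ (consistent with the level $-1$ of Theorem \ref{th:momentmap} and the metric $\xi\cdot\eta=\on{tr}(\xi\eta)$) requires care. A cleaner route, which I would present in an appendix (cf. the reference to Appendix \ref{app:groupoid}), is to realize \eqref{eq:omegan} as the reduction of the $\Gpar$-action on $\A^{\on{pos}}(P)$ for the cylinder, so that the primitive $1$-form appears directly as the boundary term in the Atiyah--Bott moment map pulled back along the section $\FF\mapsto\FF^*(\text{standard cylinder connection})$; then closedness, invariance and the moment-map property are all inherited from the Atiyah--Bott package of the appendix, and only the explicit coordinate formula for the boundary $1$-form needs to be computed.
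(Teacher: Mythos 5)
Your proposal is correct and follows essentially the same route as the paper: the moment map identities are verified by contracting \eqref{eq:omegan} with the fundamental vector fields and integrating by parts, uniqueness follows because the difference of two such forms is basic over the one-dimensional quotient $\R_{>0}$, and your concluding remark about deriving the primitive from a gauge-theoretic boundary term parallels the paper's Appendix \ref{app:groupoid}, where $\omega_N$ is obtained as a cross-section of the symplectic groupoid over $\on{Hill}(S^1)$. The only real divergence is nondegeneracy, where you analyze the kernel directly (noting that the $S^1$-stabilizer direction pairs with $\d\ell$) while the paper deduces it from the fact that all $S^1$-reduced spaces are symplectic; both work, though your parenthetical about the stabilizer of $T=-\tf14\ell^2$ being trivial for generic $\ell$ should be dropped, since that stabilizer is always the one-dimensional algebra of constant vector fields.
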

Before proving this result, we have to explain the ingredients of \eqref{eq:omegan}. For fixed $x\in S^1$, we have the evaluation map $\on{ev}_x\colon \Diff(S^1)\to \R/\Z,\ \FF\mapsto\FF(x)$. As in \cite{al:coad} we shall denote this function on $ \Diff(S^1)$ simply by $\FF(x)$ (thinking of $\FF$ as a variable). 
The exterior derivative $\d(\FF(x))$ of this function is a 1-form on $\Diff(S^1)$; letting $x$ vary this is a 1-form on diffeomorphisms with values in periodic functions, 
 \[ \d\mathsf{F}\in \Omega^1(\Diff_\oz(S^1),|\Omega|^0_{S^1}).\]
On the other hand, for fixed $\mathsf{F}$ we may take the exterior derivative of the function $x\mapsto \mathsf{F}(x)$. We shall denote it by 
\[ \FF'\in  \Omega^0(\Diff_\oz(S^1),|\Omega|^1_{S^1})\]
(a more accurate notation would be $\FF'(x)|\d x|$).
Higher derivatives are defined as well; for example, $\FF''$ is naturally a function on $\Diff_\oz(S^1)$ with values in quadratic differentials. Since $\FF'(x)>0$ everywhere, we may also consider $1/\FF'\in  \Omega^0(\Diff_\oz(S^1),|\Omega|^{-1}_{S^1})$. 
With this understanding, each of the terms in \eqref{eq:omegan} is a 2-form on $\Diff_\oz(S^1)$ with values in 
$|\Omega|^{1}_{S^1}$; integration of the 1-density over $S^1$ results in a 2-form on $\Diff_\oz(S^1)$.

\begin{proof}[Proof of Theorem \ref{th:trumpet}]
Expanding \eqref{eq:omegan}, we have 
\begin{equation}\label{eq:omegan2}
	\omega_N=\f{1}{4} \int_{S^1}\ 	\ \Big( -\FF' \d \ell^2\wedge \d \mathsf{F}-\ell^2\,\d \FF'\wedge \d \FF
+\f{\d\FF'\wedge\d\FF''}{(\FF')^2}
\Big).\end{equation}
To check that  it does satisfies the moment map condition, we consider its contraction with a left-invariant vector field $v^L$ on $\Diff_\oz(S^1)$ 
corresponding to $v\in \Vect(S^1)$. The flow of $v^L$ on $\Diff_\oz(S^1)$ is given in terms of the flow 
$t\mapsto \exp(tv)$ by $\FF\mapsto \FF\circ \exp(-tv)$. As explained in \cite[Lemma 4.8]{al:coad}, if $v=f(x)\partial_x$ then
\[ \iota(v^L)\d\FF=-\FF'\, f.\]
The contractions with $\d\FF',\ \d\FF''$ are obtained by taking derivatives of this expression. Hence, 
\[ \iota(v^L)\omega_N=\f{1}{4} \int_{S^1} 
\Big(-(\FF')^2 f \d \ell^2-\ell^2 f \FF'\d\FF' -\ell^2(-\FF'f)'\d\FF
+\f{ (-\FF' f)' \d\FF''-(-\FF' f)'' \d\FF'}{(\FF')^2}       \Big) \]  
Use integration by parts so that no derivatives of $f$ appear:
\[  \iota(v^L)\omega_N=\f{1}{4} \int_{S^1} 
\Big(-(\FF')^2 \d \ell^2-2\ell^2 \FF'\d\FF'
+\FF'\Big( \f{\d \FF''}{(\FF')^2}
\Big)'+\FF'\Big( \f{\d \FF'}{(\FF')^2}\Big)''
\Big)f. 
\]
After simplifications, this becomes  
\[ \iota(v^L)\omega_N=\d\int_{S^1} \Big(\big(-\f{1}{4} (F')^2 \d\ell^2 +\f{1}{2} \S(\FF)\big)  \Big) f
=\d\int_{S^1} \big(\FF^{-1}\cdot T(\ell)\big)\,f
.\]
where $T(\ell)=-\f{1}{4}\ell^2$ is the Hill potential corresponding to $\ell$. 	This shows that $(\ell,\FF)\mapsto -\FF^{-1}\cdot T(\ell)$ is a moment map for the action. Consider on the other hand the $S^1$-action $\FF\mapsto \FF+t\mod\Z$. Letting $Z$ denote its generating vector field, we have $\iota_Z \d \FF=1$, hence $\iota_Z\d \FF'=0,\ \iota_Z\d\FF''=0$. It follows that 
\[ \iota(Z)\omega_N=\f{1}{4}\int_{S^1} \FF' \d\ell^2=\f{1}{4}  \d\ell^2\]
where we used $\int_{S^1}\FF'=1$ by fundamental theorem of calculus. It follows that $(\ell,\FF)\mapsto -\f{1}{4}\ell^2$ is a moment map for this action. The reduction of $N$ with respect to this $S^1$-action, at level $-\f{1}{4}\ell^2$, 
is $\on{Diff}_+(S^1)/S^1$ with a closed $\Diff_\oz(S^1)$-invariant 2-form whose moment map gives a bijection 
onto $\Diff_\oz(S^1)\cdot L(\ell)\subset \mf{vir}^*_1(S^1)$. The reduction hence equals  
the (hyperbolic) coadjoint Virasoro orbit through $L(\ell)$. The fact that all the $S^1$-reduced spaces of $(N,\omega_N)$ 
are symplectic implies that $\omega_N$ must itself be symplectic.  The uniqueness part for $\omega_N$ follows since the difference of two 2-forms on $N$ satisfying the moment map condition is basic for the $\Diff_\oz(S^1)$-action, and hence is zero since the quotient is 1-dimensional. 
\end{proof}


\subsection{Fenchel-Nielsen description of the symplectic form}
For $i=1,\ldots,3g-3+r$, let  $\ell_i,\tau_i$ be the length and twist parameters with respect to $\mathsf{D}_i$, thought of as functions on $\on{Teich}(\Sigma)$. Also, for $j=1,\ldots,r$ let 
\[ \pi_j\colon \on{Teich}(\Sigma)\to N\]
be the map given by projection to the $j$-th boundary factor in \eqref{eq:FN1} (the Teichm\"uller space of the $j$-th trumpet), followed by the quotient map $\wt{N}\to N$. 

\begin{theorem}\label{th:fn}
In terms of Fenchel-Nielsen parameters \eqref{eq:FN1}, the symplectic form on $\on{Teich}(\Sigma)$  is given by 
\begin{equation}\label{eq:wolpertformula}
 \omega=	\hh \sum_{i=1}^{3g-3+r} \d \ell_i\wedge \d \tau_i+\sum_{j=1}^r \pi_j^*\omega_{N}.\end{equation}
\end{theorem}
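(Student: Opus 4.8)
The plan is to deduce the Fenchel--Nielsen formula \eqref{eq:wolpertformula} by combining the classical Wolpert formula on the compact core with the trumpet formula (Theorem \ref{th:trumpet}), using the symplectic reduction picture of Theorem \ref{th:teichhat}. The starting observation is that the actions of the twist flows along the $\mathsf{D}_i$ and of the groups $\wt{\Diff}_\oz(\CC_j)$ all commute and are Hamiltonian: the twist flow along $\mathsf{D}_i$ has moment map $\tfrac12\ell_i$ (this is the infinitesimal Wolpert duality $\iota(\partial_{\tau_i})\omega=\tfrac12\d\ell_i$, which for the interior gluing curves is exactly Wolpert's theorem as recalled in \cite{wol:sym}, and which I would re-derive here from Theorem \ref{th:teichhat} by noting that the twist flow is generated by a bundle automorphism supported in a collar of the geodesic), while the $j$-th boundary action has moment map $[\gz]\mapsto-L_j$, the boundary Hill operator on $\CC_j$, by Theorem \ref{th:momentmap}. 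The map $\pi_j\colon\on{Teich}(\Sigma)\to N$ intertwines the $j$-th boundary action with the $\Diff_\oz(S^1)$-action on $N=\R_{>0}\times\Diff_\oz(S^1)$, and it is $\ell_j$-preserving; hence $\pi_j^*\omega_N$ is a closed $2$-form on $\on{Teich}(\Sigma)$ whose contraction with the $j$-th boundary action reproduces $\d(-L_j)$, by Theorem \ref{th:trumpet}.

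The core of the argument is then a uniqueness/rigidity step. Consider the $2$-form
\[
\omega_0=\hh\sum_{i=1}^{3g-3+r}\d\ell_i\wedge\d\tau_i+\sum_{j=1}^r\pi_j^*\omega_N
\]
on the right-hand side of \eqref{eq:wolpertformula}, and set $\beta=\omega-\omega_0$. Both $\omega$ and $\omega_0$ are closed; both satisfy the same moment map conditions with respect to the $(3g-3+r)$ twist flows and the $r$ boundary $\wt{\Diff}_\oz(\CC_j)$-actions. Therefore $\iota_X\beta=0$ for every vector field $X$ generating one of these actions, i.e.\ $\beta$ is \emph{basic} for the combined action of $\R^{3g-3+r}\times\prod_j\wt{\Diff}_\oz(\CC_j)$. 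By the slice description in Section \ref{subsec:fn}, the quotient of $\on{Teich}(\Sigma)$ by this combined action is the finite-dimensional space $\R_{>0}^{3g-3}\times\R_{>0}^r$ parametrized by the length functions $\ell_i,\ell_j$; so $\beta$ descends to a closed $2$-form on this base, i.e.\ $\beta=\sum p_{ab}\,\d\ell_a\wedge\d\ell_b$ for functions $p_{ab}$ of the lengths. It remains to kill these remaining components. For the interior lengths this is the classical fact that $\d\ell_i\wedge\d\ell_k$ does not occur in Wolpert's formula, which follows because the twist flows for a \emph{different} pants decomposition (e.g.\ one obtained by changing a single curve $\mathsf{D}_k$ to a curve meeting $\mathsf{D}_i$) give additional Hamiltonian vector fields whose contractions with $\beta$ must also vanish; running this over enough pants decompositions forces all $p_{ab}=0$. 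Equivalently, and more economically, one can restrict to the compact-core Teichm\"uller space $\on{Teich}_{\on{geod}}(\Sigma)$ via \eqref{eq:principalbundle}, where the boundary $\pi_j^*\omega_N$-terms collapse to the boundary-length contributions and the statement becomes precisely the classical Wolpert--Fenchel--Nielsen formula (valid for surfaces with geodesic boundary, see \cite{wol:sym}); then one checks that the fibers of \eqref{eq:principalbundle} are isotropic complements, so that $\omega$ on $\on{Teich}(\Sigma)$ is determined by its restriction to the core together with the $\pi_j^*\omega_N$ data.

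The step I expect to be the main obstacle is the identification of the twist-flow moment map as $\tfrac12\ell_i$ \emph{within the present formalism} — i.e.\ showing directly from the Atiyah--Bott reduction of Theorem \ref{th:teichhat} (rather than quoting Wolpert) that the vector field $\partial_{\tau_i}$ on $\on{Teich}(\Sigma)$ satisfies $\iota(\partial_{\tau_i})\omega=\tfrac12\d\ell_i$. The cleanest route is to realize the twist flow as generated by a one-parameter family of gauge transformations supported in a collar of the geodesic $\mathsf{D}_i$, compute its Hamiltonian via the bulk term $-F^\theta\cdot s^\theta$ of the moment map (which vanishes on flat connections, so that the relevant contribution is the boundary term associated to cutting along $\mathsf{D}_i$), and recognize the resulting period integral over $\mathsf{D}_i$ as (half) the hyperbolic length. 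Once this Wolpert-type identity is in hand on the interior and Theorem \ref{th:trumpet} supplies the boundary contributions, the uniqueness argument above is routine, and formula \eqref{eq:wolpertformula} follows.
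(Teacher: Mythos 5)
Your overall strategy --- pin down $\omega$ by the moment map conditions for the twist flows and the boundary $\wt{\Diff}_\oz(\CC_j)$-actions, then argue that the discrepancy $\beta=\omega-\omega_0$ is basic for the combined action and must vanish --- is genuinely different from the paper's, which is a direct computation: the paper represents the length- and twist-deformations at $\mathsf{D}_i$ by explicit $\g$-valued $1$-forms $a,b$ supported in a collar of the geodesic (using the hyperbolic-cylinder normal form of the connection) and evaluates $\omega_{AB}(a,b)=\tfrac12\,\dot\ell_0\dot\tau_0$ directly; all cross-pairings between deformations attached to disjoint curves (including the $\d\ell_a\wedge\d\ell_b$ pairings) vanish because the representatives can be chosen with disjoint supports, and only the pairings involving a boundary vector field $v\in\Vect(\CC_j)$ are handled by the moment map condition, which both sides of \eqref{eq:wolpertformula} satisfy. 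Your route could in principle work, but as written it has two genuine gaps.

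First, the Wolpert duality $\iota(\partial_{\tau_i})\omega=-\tfrac12\,\d\ell_i$ is the substantive computation and you only sketch it; note that the twist flow is \emph{not} generated by a globally defined one-parameter family of gauge transformations (the would-be gauge transformation jumps across the cut along $\mathsf{D}_i$), so ``compute its Hamiltonian from the bulk/boundary terms of the moment map'' does not directly apply and has to be replaced by something like the paper's evaluation of $\omega_{AB}$ on the two explicit localized tangent vectors. Second, and more seriously, your rigidity step killing the residual $\sum p_{ab}\,\d\ell_a\wedge\d\ell_b$ does not close: (a) the ``different pants decomposition'' argument is circular, since for a new twist field $X'$ you know $\iota_{X'}\omega=-\tfrac12\,\d\ell'$ but you do \emph{not} know $\iota_{X'}\omega_0=-\tfrac12\,\d\ell'$ (that is essentially the statement being proven in the new coordinates), so you cannot conclude $\iota_{X'}\beta=0$; (b) the ``restrict to the compact core'' argument presupposes that the symplectic form constructed in this paper coincides with the classical Weil--Petersson form on $\on{Teich}_{\on{geod}}(\Sigma)$, which is precisely the interior content of the theorem and is nowhere established independently --- moreover $\on{Teich}_{\on{geod}}(\Sigma)$ is a quotient of $\on{Teich}(\Sigma)$, not a submanifold, so ``restriction'' and ``isotropic complements'' would need to be made precise (the $\wt{\Diff}_\oz(\CC_j)$-orbits are not isotropic). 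The clean way to dispose of these terms is the disjoint-support argument in the Atiyah--Bott picture, which is what the paper uses.
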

In the case without boundary, this is the well-known  Wolpert formula \cite{wol:sym} for the Weil-Petersson symplectic form. 
The first part of the following argument is adapted from 
\cite[Section 3.3.2]{saa:jt}. 

\begin{proof} 
	We verify the formula at any given $[\gz]\in \on{Teich}(\Sigma)$. Pick a representative $\gz$ as in Section \ref{subsec:fn}; in particular, the $\mathsf{D}_i$ are geodesics. 
    It suffices to verify the formula on tangent vectors of the following types: infinitesimal changes of length or twist parameters for the curves $\mathsf{D}_i$,  tangent vectors 
	$v\in \Vect(\CC_j)$ corresponding to the $\wt{\Diff}_\oz(\CC_j)$-factors, as well as infinitesimal changes of the length parameters for $\CC_j'$, the geodesic ends of the trumpets. 
	These tangent vectors are realized by variations of the hyperbolic metric $\gz$. 
	
Consider a fixed $\mathsf{D}=\mathsf{D}_i$. 
We may introduce coordinates on some collar neighborhood of $\mathsf{D}$ so that $\gz$ is given by 
	the metric of the hyperbolic cylinder, \eqref{eq:doubletrumpet}, with the coframe (Example \ref{ex:example2}\ref{it:c}) 
	\[ \alpha_1=\cosh(u)\ell\ \d x ,\ \   \alpha_2=-\d u,\ \ 
	\kappa=-\sinh(u)\ell\ \d x\]
	and corresponding connection one-form
	\[ 
	A=\hh \left(\begin{array}{cc}  -\d u &  e^{u}\ell\d x\\ e^{-u}  \ell \d x & \d u\end{array}\right).
	\]
	Recall now the description of Fenchel-Nielsen flow, using pullback under \eqref{eq:twistflow}. 
	The corresponding $A_\tau$ is obtained by pullback: 
	\[ 
	A_\tau=\hh \left(\begin{array}{cc}  -\d u &  e^{u}(\ell \d x+\tau f'(u)\ \d u)\\   e^{-u}(\ell \d x+\tau f'(u)\d u) & \d u\end{array}\right).
	\] 
	Note that $A_\tau$ agrees with $A$ for $|u|\ge \epsilon$, hence it defines a new global connection $\theta_\tau$.
	Replacing $\tau$ with $\tau_t$, and taking a $t$-derivative, this gives the tangent vector 
	\[ b=\hh \left(\begin{array}{cc}  0&  e^{u}\\   e^{-u}& 0\end{array}\right) \dot{\tau}_0 f'(u)\,\d u.\]
		
	The tangent vector corresponding to a change of the length parameter $\ell$ is obtained by replacing $\ell$ with $\ell_t$ and taking a $t$-derivative: 
	\[ a=\hh \left(\begin{array}{cc}  0&  e^{u}\\   e^{-u}& 0\end{array}\right) \dot{\ell}_0\ \d x.\]
	We may arrange that $a$ has this form on the collar neighborhood $|u|\le \epsilon$, but vanishes outside of 
   a larger collar neighborhood (say, $|u|\le 2\epsilon$). 
	Hence, the Atiyah-Bott form on these tangent vectors evaluates to 
	\[ 
	\omega_{AB}(a,b)=\int_\Sigma \on{tr}(a b)=\f{1}{2} \int_{|r|\le \epsilon} \dot{l}_0\dot{\tau}_0 \d x\wedge f'(u) \d u=\f{1}{2}
	\dot{l}_0\dot{\tau}_0.\]
	This shows $\omega(\f{\p}{\p \ell_i},\f{\p}{\p \tau_i})=\f{1}{2}$. 
	On the other hand, if $a,b$ are tangent vectors corresponding to twist or length deformations for \emph{non-intersecting} geodesics, 
	then $\omega_{AB}(a,b)=0$ since we may take the support of $a,b$ to be disjoint. Thus, for example,  
	$\omega(\f{\p}{\p \ell_{i_1}},\f{\p}{\p \ell_{i_2}})=0$ for $i_1\neq i_2$. 
	Similarly, the pairing of the tangent vectors $\f{\p}{\p \ell_{i}},\ \f{\p}{\p \tau_{i}}$ with 
	a tangent vector $\f{\p}{\p \ell_j}$, corresponding to the change of length parameter for the $j$-th trumpet, is
	zero. 
	It remains to check \eqref{eq:wolpertformula} on pairs of tangent vectors, one of which is a 
	vector field $v\in \Vect(\CC_j)$ on the ideal boundary of the $j$-th trumpet. 
  	For this, it suffices to observe that both sides satisfy  the moment map condition 
	$\omega(v,\cdot)=\l \d L_j,v\r$ where $L_j$ is the Hill potential for the $j$th boundary. 
\end{proof}

\subsection{Darboux coordinates on the trumpet space}\label{subsec:trumpet}
The expression \eqref{eq:wolpertformula} for the symplectic form on $\on{Teich}(\Sigma)$ involves the symplectic structure $\omega_N$ on the space $N=\Diff_\oz(S^1)\times \R_{>0}$ associated to the trumpet end. We may go one step further and introduce Darboux coordinates on the space $N$, and hence on $\on{Teich}(\Sigma)$. 

Consider the symplectic structure on $\wt{N}=\R_{>0}\times \wt{\Diff}_\oz(S^1)$, given by \eqref{eq:omegan} 
with $\FF$ replaced by a 
lift $\wt{\FF}$ to the universal cover.  We may regard $\wt{\FF}$ as $\Z$-equivariant function on $\R$, that is,   $\wt{\FF}(x+1)=\wt{\FF}(x)+1$. 
The expression 
\begin{equation}\label{eq:u} u(x)= \log(\wt{\FF}'(x))+{\ell} (\wt{\FF}(x)-x);\end{equation}
is a periodic function on $\R$. Taking into account the dependence on $\ell,\wt{\FF}$, this is a function on 
$\wt{N}$ with values in $|\Omega|^0_{S^1}$. Thus $\d u\wedge \d u'$ is a 2-form on $N$ with values in $|\Omega|^1_{S^1}$; integrating over $S^1$ it is a 2-form on $\wt{N}$. 

\begin{proposition}
\[  \omega_{\wt{N}}=-\hh \d\ell\wedge \d u_0+\f{1}{4}\int_{S^1}\ \d u\wedge \d u'\]
where $u_0=\int_{S^1} \ u$ (a scalar function of $(\ell,\wt\FF)\in \wt{N}$) 
\end{proposition}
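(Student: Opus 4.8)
The plan is to verify the claimed identity by a direct but carefully organized computation, starting from the expanded form \eqref{eq:omegan2} of $\omega_{\wt N}$ and the definition \eqref{eq:u} of $u(x)$. First I would compute $\d u$ as a $1$-form on $\wt N$ with values in periodic functions: from $u = \log \wt\FF' + \ell(\wt\FF - x)$ we get
\[
\d u = \frac{\d\wt\FF'}{\wt\FF'} + \ell\,\d\wt\FF + (\wt\FF - x)\,\d\ell,
\]
and then differentiate in the $S^1$-direction to obtain $u' = \wt\FF''/\wt\FF' + \ell(\wt\FF' - 1)$ and hence
\[
\d u' = \frac{\d\wt\FF''}{\wt\FF'} - \frac{\wt\FF''\,\d\wt\FF'}{(\wt\FF')^2} + \ell\,\d\wt\FF' + (\wt\FF' - 1)\,\d\ell.
\]
Then I would form the wedge $\d u \wedge \d u'$, expand into the various bidegree-$(1,1)$ terms in the $\d\ell$, $\d\wt\FF$, $\d\wt\FF'$, $\d\wt\FF''$ directions, and integrate over $S^1$.

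The key bookkeeping step is to handle the $\d\ell$-terms separately from the rest. The terms in $\int_{S^1}\d u\wedge\d u'$ not involving $\d\ell$ should reproduce (four times) the non-$\d\ell$ part of \eqref{eq:omegan2}, namely $\tfrac14\int_{S^1}\bigl(-\ell^2\,\d\wt\FF'\wedge\d\wt\FF + (\wt\FF')^{-2}\,\d\wt\FF'\wedge\d\wt\FF''\bigr)$; this is the cleanest part and uses only integration by parts on $S^1$ together with $\int_{S^1}\d(\,\cdot\,)' = 0$ to discard total derivatives, plus the antisymmetry that kills the $\tfrac{\d\wt\FF'}{\wt\FF'}\wedge(\ell\,\d\wt\FF')$ and $(\ell\,\d\wt\FF)\wedge(\ell\,\d\wt\FF')$-type pieces (the latter because $\d\wt\FF\wedge\d\wt\FF' = \tfrac12\d(\text{something})'$ up to sign, integrating to zero, or more simply because $\int \wt\FF'\,\d\wt\FF\wedge\d\wt\FF' $ is a total derivative in $x$). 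The remaining $\d\ell$-terms in $\int_{S^1}\d u\wedge\d u'$ must be matched against $-\tfrac14\int_{S^1}\wt\FF'\,\d\ell^2\wedge\d\wt\FF = -\tfrac12\int_{S^1}\ell\,\wt\FF'\,\d\ell\wedge\d\wt\FF$ on the one hand, and against the new term $-\tfrac12\,\d\ell\wedge\d u_0 = -\tfrac12\,\d\ell\wedge\d\!\int_{S^1} u$ on the other.

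For that last matching I would use $\d u_0 = \int_{S^1}\d u = \int_{S^1}\bigl(\tfrac{\d\wt\FF'}{\wt\FF'} + \ell\,\d\wt\FF + (\wt\FF - x)\,\d\ell\bigr)$, noting that $\int_{S^1}\d\ell = 0$ in the wedge with $\d\ell$, so effectively $\d\ell\wedge\d u_0 = \d\ell\wedge\int_{S^1}\bigl(\tfrac{\d\wt\FF'}{\wt\FF'} + \ell\,\d\wt\FF\bigr)$. Meanwhile the $\d\ell$-part of $\int_{S^1}\d u\wedge\d u'$ is $\int_{S^1}\bigl[(\wt\FF-x)\,\d\ell\bigr]\wedge u'\text{-part} + \bigl[\tfrac{\d\wt\FF'}{\wt\FF'}+\ell\,\d\wt\FF\bigr]\wedge\bigl[(\wt\FF'-1)\,\d\ell\bigr]$, which after integration by parts in $x$ (moving derivatives off $\wt\FF$ so the $(\wt\FF - x)$ and $(\wt\FF' - 1)$ factors combine) I expect to collapse to a combination of $\d\ell\wedge\int_{S^1}\tfrac{\d\wt\FF'}{\wt\FF'}$, $\d\ell\wedge\int_{S^1}\ell\,\d\wt\FF$, and $\d\ell\wedge\int_{S^1}\ell\,\wt\FF'\,\d\wt\FF$; comparing coefficients then pins down the identity. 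The main obstacle is precisely this $\d\ell$-bookkeeping: keeping track of the $x$-integration-by-parts boundary-free terms (using $\Z$-equivariance of $\wt\FF$ so that $\wt\FF - x$ is periodic and total $x$-derivatives integrate to zero) while correctly collecting the coefficients of $\d\ell\wedge\d\wt\FF$ and $\d\ell\wedge\d\wt\FF'$, and confirming that the $\ell^2$-term $-\tfrac14\wt\FF'\,\d\ell^2\wedge\d\wt\FF$ emerges with the right sign and factor. Everything else is antisymmetry and discarding total derivatives, which is routine.
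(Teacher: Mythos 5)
Your overall strategy --- expand $\d u\wedge \d u'$ using \eqref{eq:u}, sort the terms, integrate by parts on $S^1$, and match against \eqref{eq:omegan2} --- is exactly the paper's proof (the paper sorts by homogeneity in $\ell$ rather than by which terms carry a $\d\ell$, but that is only bookkeeping). However, one concrete step in your plan is wrong and would derail the non-$\d\ell$ part of the matching. You claim that the piece $(\ell\,\d\wt\FF)\wedge(\ell\,\d\wt\FF')=\ell^2\,\d\wt\FF\wedge\d\wt\FF'$ is killed, justifying this either by ``$\d\wt\FF\wedge\d\wt\FF'=\tfrac12\d(\cdot)'$'' or by ``$\int_{S^1}\wt\FF'\,\d\wt\FF\wedge\d\wt\FF'$ is a total derivative in $x$''. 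Neither is true: the scalar identity $ff'=\tfrac12(f^2)'$ does not transfer to the wedge of a $1$-form with its $x$-derivative, because $\d\wt\FF\wedge\d\wt\FF=0$ identically, so $\partial_x(\d\wt\FF\wedge\d\wt\FF)=0$ carries no information. Evaluated on tangent vectors $a,b$, the $2$-form $\int_{S^1}\d\wt\FF\wedge\d\wt\FF'$ gives $\int_{S^1}(ab'-a'b)=2\int_{S^1}ab'$, a nondegenerate pairing, not zero. Indeed this term \emph{cannot} vanish: it is the only non-$\d\ell$ term carrying an $\ell^2$, so it is precisely the source of the summand $-\ell^2\,\d\FF'\wedge\d\FF$ in \eqref{eq:omegan2} that you yourself say the non-$\d\ell$ part must reproduce. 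Your plan is internally inconsistent at exactly this point.

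The repair is small. Writing $\d u=A+B+C$ with $A=\d\wt\FF'/\wt\FF'$, $B=\ell\,\d\wt\FF$, $C=(\wt\FF-x)\,\d\ell$, the terms of $\int_{S^1}\d u\wedge\d u'$ that actually vanish are $A\wedge B'=\tfrac{\ell}{\wt\FF'}\,\d\wt\FF'\wedge\d\wt\FF'=0$ (genuine antisymmetry) and $B\wedge A'=\ell\,\d\wt\FF\wedge\partial_x(\d\log\wt\FF')$, which after one integration by parts becomes $-\ell\,\d\wt\FF'\wedge\d\wt\FF'/\wt\FF'=0$. The surviving terms $A\wedge A'$ and $B\wedge B'$ give the last two summands of \eqref{eq:omegan2}. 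Your handling of the $\d\ell$-terms is sound: $\int_{S^1}(A\wedge C'+C\wedge A')=2\,\d\ell\wedge\d\int_{S^1}\log\wt\FF'$ and $\int_{S^1}(B\wedge C'+C\wedge B')=-2\ell\,\d\ell\wedge\int_{S^1}(\wt\FF'-1)\,\d\wt\FF$ (using $\int_{S^1}\d\wt\FF'=0$ and periodicity of $\wt\FF-x$), and these close up correctly against $-\hh\,\d\ell\wedge\d u_0$ and the first term of \eqref{eq:omegan2}. With that one correction your argument is the paper's.
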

\begin{proof}
We work out the terms appearing in 
\[ \f{1}{4}\int_{S^1}\	\d u\wedge\d u'
=\f{1}{4}\int_{S^1} \d \big(\log(\wt{\FF}')+{\ell} (\wt{\FF}-\on{Id})\big)\wedge \d \big(
\log(\wt{\FF}')+{\ell} (\wt{\FF}-\on{Id})
\big)'\]
according to their homogeneity with respect to $\ell$, and compare to the corresponding terms in \eqref{eq:omegan2}. 
The term of homogeneity $0$ is $\f{1}{4}(\wt{F'})^{-2}\d\wt{F}'\wedge \d \wt{F}''$, matching that in \eqref{eq:omegan2}.
The terms of homogeneity $1$ 
 are (using integration by parts to combine two terms)  
\[ 
\hh\int_{S^1} \d \wt{F}'\wedge \d\ell+ \hh\int_{S^1}  \d\ell \wedge \d\log (\wt{F}')
\]
The first integral is zero, by the fundamental theorem of calculus, while the second integral gives one of the terms of $\hh\d\ell\wedge \d u_0$. The terms of homogeneity 2 are 
\begin{align*}
=\f{1}{4}\int_{S^1} \ell^2 \d\wt{\FF}\wedge \d\wt{\FF}'
 +\hh \int_{S^1} (\wt{\FF}'-1)\d\wt{\FF}\wedge \ell  \d\ell
\end{align*}
(again we used an integration by parts to combine two terms). The integral 
\[ -\int_{S^1}\d \wt{F}\wedge   \ell \d\ell=\d\ell\wedge \d \int_{S^1} \ell\wt{F}
=\d\ell\wedge \d \int_{S^1} \ell(\wt{F}-\on{Id})
\]
gives one of the terms in $ \d\ell\wedge \d u_0$. The remaining terms match the corresponding terms in \eqref{eq:omegan2}. 
\end{proof}

Having $\omega_{\wt{N}}$ in this form, it is straightforward to introduce Darboux coordinates, by  its Fourier expansion: 
\[ u(x)=\sum_{n\in \Z} u_n e^{2\pi i nx}.\]
We obtain:
\begin{proposition}\label{prop:darboux} The symplectic form on $\wt{N}$ is given by 
\[ \omega_{\wt{N}}=-\hh \d\ell\wedge\d u_0+\pi i\sum_{m>0} m\ \d u_{-m}\wedge \d u_m\]	
\end{proposition}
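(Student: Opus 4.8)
The plan is to start from the formula for $\omega_{\wt N}$ established in the preceding proposition,
\[ \omega_{\wt N} = -\hh\,\d\ell\wedge\d u_0 + \f{1}{4}\int_{S^1}\d u\wedge\d u', \]
and to expand the integral term in the Fourier basis $\{e^{2\pi i n x}\}_{n\in\Z}$. First I would record that the coefficients in $u(x)=\sum_{n\in\Z}u_n e^{2\pi i n x}$ are smooth functions on $\wt N$ (real for $n=0$, with $u_{-n}=\overline{u_n}$ because $u$ is real), and that $u_0=\int_{S^1}u$ is exactly the function appearing in the first term. Since $u$ and all its $x$-derivatives are smooth with bounds locally uniform on $\wt N$, the Fourier series and its derivative converge absolutely, so one may differentiate and integrate term by term and interchange these operations with the exterior differential $\d$ on $\wt N$; thus $\d u=\sum_n(\d u_n)\,e^{2\pi i n x}$ and $\d u'=\sum_n 2\pi i n\,(\d u_n)\,e^{2\pi i n x}\,|\d x|$, the latter being $|\Omega|^1_{S^1}$-valued.

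Next I would multiply out the wedge product and integrate, using $\int_{S^1}e^{2\pi i k x}|\d x|=\delta_{k,0}$:
\[ \int_{S^1}\d u\wedge\d u'=\sum_{m,n\in\Z}2\pi i n\,(\d u_m\wedge\d u_n)\int_{S^1}e^{2\pi i(m+n)x}|\d x|=-2\pi i\sum_{m\in\Z}m\,\d u_m\wedge\d u_{-m}. \]
The $m=0$ term vanishes; substituting $m\mapsto-m$ in the part of the sum with $m<0$ and using the anticommutativity of $\wedge$ identifies it with the part with $m>0$, so the right-hand side equals $-4\pi i\sum_{m>0}m\,\d u_m\wedge\d u_{-m}=4\pi i\sum_{m>0}m\,\d u_{-m}\wedge\d u_m$. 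Dividing by $4$ and restoring the term $-\hh\,\d\ell\wedge\d u_0$ gives the asserted formula for $\omega_{\wt N}$.

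I do not anticipate a genuine obstacle: the argument is a term-by-term Fourier computation. The only points that warrant care are (i) the one-line justification that the interchange of $\d$, $\partial_x$, $\int_{S^1}$ with the infinite sum is legitimate — immediate from smoothness of $u$ in $x$ — and (ii) the sign-and-factor bookkeeping when folding the $m<0$ half of the sum onto the $m>0$ half, which is precisely what turns the prefactor $\tfrac14$ into $\pi i$. As a consistency check one can write $u_m=p_m+iq_m$ with $p_m,q_m$ real for $m>0$ and observe that $\pi i\,m\,\d u_{-m}\wedge\d u_m=-2\pi m\,\d p_m\wedge\d q_m$, so $\omega_{\wt N}$ is manifestly real and $(\ell,u_0)$ together with the $(p_m,q_m)$, $m>0$, form Darboux pairs up to explicit nonzero constants.
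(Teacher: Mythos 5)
Your computation is correct and is exactly the argument the paper intends: the authors give no written proof beyond the remark that the result follows by Fourier-expanding $u$ in the formula $\omega_{\wt N}=-\hh\,\d\ell\wedge\d u_0+\f{1}{4}\int_{S^1}\d u\wedge\d u'$, and your term-by-term expansion, the folding of the $m<0$ half onto the $m>0$ half, and the resulting prefactor $\pi i$ all check out. The reality check via $u_{-m}=\overline{u_m}$ is a nice addition but not required.
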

 
Rewriting the second sum in terms of real and imaginary parts of $u_m$, one obtains a Darboux normal form.

\begin{appendix}

\section{Affine moment maps, central extensions}
\subsection{Central extensions}\label{app:A1}
Let $H$ be a Lie group with Lie algebra $\h$. Given a central extension 
\[ 0\to \R\to \wh{\h}\to \h\to 0,\]
let $\E\subset \wh{\h}^*$ be the affine space of  linear functionals taking $1\in \R$ to $1$. The adjoint action of $H$ on $\wh{\h}$ 
defines an affine $H$-action on this space, with linear part the coadjoint action on $\h^*$. 
Conversely, suppose $\E$ is an affine space with an affine $H$-action, with underlying linear $H$-space the 
coadjoint representation. For $\mu\in \E$, the map $\h\to \h^*,\ \xi\mapsto \xi. \mu$, 
defined by the infinitesimal action, is a 
Lie algebra cocycle. Suppose this cocycle is skew-symmetric:  
\begin{equation}\label{eq:skew} \l\xi. \mu,\ \eta \r=-\l\eta. \mu,\ \xi\r\end{equation}
for all $\xi,\eta$. (This condition does not depend on the choice of $\mu$.) Then one obtains a central extension:
Take $\wh{\h}$ to be the vector space of affine-linear functions on $\E$, with bracket 
\[ [\wh{\xi},\wh{\eta}](\mu)=\l\xi. \mu,\ \eta \r.\]
Here $\xi,\eta$ are the linear functionals underlying $\wh{\xi},\wh{\eta}\in  \wh{\h}$. The affine space $\E$ is a Poisson submanifold of $\wh{\h}^*$, with symplectic leaves the coadjoint orbits. The action groupoid $H\times \E\rra \E$ has a canonical 
symplectic structure making it into a symplectic groupoid. 
See \cite[Example A.12]{al:coad} for further discussion.

\subsection{Affine moment maps}\label{app:A2}
Let $\E$ be an affine $H$-space as above, with underlying linear $H$-space the coadjoint representation.   
Given an  $H$-manifold $M$ with a closed invariant 2-form $\omega$, we may consider $\E$-valued affine moment maps 
\[ \Phi\colon M\to \E;\]
that is, $\Phi$ is $H$-equivariant and satisfies the moment map condition\footnote{Generating vector fields are defined in terms of their action on functions as $(\xi_M f)(m)=\f{d}{d t}|_{t=0}f\big(\exp(-t\xi).m\big)$.  With this convention, $\xi\mapsto \xi_M$ is a Lie algebra morphism.  Note that if $M=\E$ is an affine space, then 
	$\xi_M(\mu)=-\xi.\mu$.}
\[\omega(\xi_M,\cdot)=-\l \d\Phi,\xi\r.\] 
Note that the differential $\d\Phi$ is a 1-form with values in the linear space  $\h^*$, hence its pairing with $\xi$ is defined.
Examples of Hamiltonian $H$-spaces with $\E$-valued moment maps are the coadjoint orbits in $\E$, with the KKS symplectic structure.  

A necessary condition for the existence of an affine moment map (for some $\E$) is that the 1-forms $\alpha\in \Omega^1(M,\h^*)$, given as 
$\l\alpha,\xi\r=-\omega(\xi_M,\cdot)$, are exact. In fact, this condition is sufficient as well: 
\begin{proposition}\label{prop:E}
Let $M$ be a connected $H$-manifold, with an invariant closed 2-form $\omega\in \Omega^2(M)$. Suppose that the 1-form
\[ \alpha\in \Omega^1(M,\h^*)\]
given as $\l\alpha,\xi\r=\omega(\xi_M,\cdot)$ is exact. Let $\E$ be the affine space of all primitives of $\alpha$, and let  
\[ \Phi\colon M\to \E\]
be the map taking  any point of $M$ to the unique primitive vanishing at that point. 
Then $\E$ is an affine $H$-space, with underlying linear action the coadjoint action, satisfying the skew-symmetry \eqref{eq:skew}.  
Furthermore, $\Phi$ 
 is an affine moment map. 
\end{proposition}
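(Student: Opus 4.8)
The plan is to verify the three claims of Proposition \ref{prop:E} in order: first, that $\E$ (the space of primitives of $\alpha$) carries an affine $H$-action with underlying linear action the coadjoint action; second, that the associated cocycle is skew-symmetric; and third, that the tautological map $\Phi$ is an affine moment map. I would begin with the observation that $\alpha\in\Omega^1(M,\h^*)$ is $H$-equivariant: since $\omega$ is invariant and $\xi\mapsto\xi_M$ intertwines the adjoint action with the action on vector fields, we get $g^*\alpha=\Ad_g^*\circ\alpha$, i.e. $\langle g^*\alpha,\xi\rangle=\langle\alpha,\Ad_g\xi\rangle$ for $g\in H$. Consequently, if $\psi\in\E$ is a primitive ($\d\psi=\alpha$, where here $\psi\colon M\to\h^*$ and $\d\psi$ is its differential), then $\Ad_g^*\circ\psi\circ g^{-1}$ is again a primitive; this defines the affine $H$-action on $\E$. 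The difference of two primitives is a locally constant, hence (by connectedness of $M$) constant, element of $\h^*$, and one checks immediately that the action on such constants is precisely the coadjoint action — so $\E$ is an affine $H$-space over the coadjoint representation, as required.

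Next I would extract the cocycle and prove skew-symmetry. Fix $\psi\in\E$ and a point $m_0\in M$ (so that $\Phi(m_0)$ is the primitive vanishing at $m_0$); for $\xi\in\h$ the infinitesimal action gives $\xi.\psi = -\frac{d}{dt}\big|_0\big(\Ad_{\exp(t\xi)}^*\circ\psi\circ\exp(-t\xi)\big)$, which evaluated at any point unwinds to $\ad_\xi^*\psi(\cdot) - \iota_{\xi_M}\alpha = \ad_\xi^*\psi - \langle\alpha,\xi\rangle$; in particular, as a constant element of $\h^*$ it equals $\ad_\xi^*\big(\psi(m)\big)-\iota_{\xi_M|_m}\alpha$ independently of $m$. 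Pairing with $\eta$ and using $\iota_{\xi_M}\alpha$'s definition $\langle\alpha,\eta\rangle=\omega(\eta_M,\cdot)$, the term $\langle\ad_\xi^*\psi(m),\eta\rangle=\langle\psi(m),[\eta,\xi]\rangle$ will cancel in the symmetrization after I also use that $\omega(\xi_M,\eta_M)$ is manifestly skew in $\xi,\eta$; the cleanest route is to compute $\langle\xi.\psi,\eta\rangle+\langle\eta.\psi,\xi\rangle$ directly and show it vanishes by evaluating at a point and invoking invariance of $\omega$ (equivalently $\L_{\xi_M}\omega=0$, whence $\d\iota_{\xi_M}\omega=-\iota_{\xi_M}\d\omega=0$, giving consistency). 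This establishes \eqref{eq:skew}, and hence by Appendix \ref{app:A1} a central extension $\wh\h$.

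Finally, the moment map condition for $\Phi$ is almost tautological: for $m\in M$, $\Phi(m)\in\E$ is the primitive of $\alpha$ vanishing at $m$, so the differential of the $\h^*$-valued function $\Phi$ at $m$ — more precisely, since $\Phi$ is affine-space-valued, the derivative of $\langle\Phi,\xi\rangle$ — is governed by $\d\langle\Phi,\xi\rangle = \langle\alpha,\xi\rangle = \omega(\xi_M,\cdot)$ up to the sign fixed by the stated convention $\omega(\xi_M,\cdot)=-\langle\d\Phi,\xi\rangle$. The one point requiring a line of care is that differentiating an affine-space-valued map is legitimate (the difference $\Phi(m)-\Phi(m')$ lives in the linear space $\h^*$, so $\d\Phi$ makes sense as an $\h^*$-valued 1-form), and that $\Phi$ so defined is smooth — this follows because locally $\Phi(m)=\psi-\psi(m)$ for any fixed smooth primitive $\psi$, which is visibly smooth in $m$. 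Equivariance of $\Phi$ follows from equivariance of $\alpha$ together with uniqueness of the vanishing primitive.

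The main obstacle, such as it is, is bookkeeping of signs and of the distinction between a point of the affine space $\E$ (a primitive, i.e. an $\h^*$-valued function on $M$) and a point of its model vector space $\h^*$; all three assertions are structurally forced once the $H$-equivariance of $\alpha$ and the closedness/invariance of $\omega$ are in hand, so no genuinely hard analysis arises. I would organize the write-up so that equivariance of $\alpha$ is proved once at the start and then reused for both the affine-action claim and the equivariance of $\Phi$.
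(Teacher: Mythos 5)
Your proposal is correct and follows essentially the same route as the paper: connectedness gives the affine structure on $\E$, the action $(h.f)(m)=\Ad_h^*\big(f(h^{-1}.m)\big)$ on primitives has coadjoint linear part, the cocycle $\l\xi.f,\eta\r=-\omega(\xi_M,\eta_M)-\l f,[\xi,\eta]\r$ is skew because each term is, and the moment map identity follows from writing $\Phi(m)$ locally as $\psi-\psi(m)$ (the paper's version of this is $\Phi(m_0)(m)=-\Phi(m)(m_0)$, so that $m\mapsto\Phi(m)|_{m_0}$ is a primitive of $-\alpha$). The only caveat is some looseness in your intermediate sign for the infinitesimal action, but since both summands of the cocycle are individually skew in $\xi,\eta$ this does not affect the argument.
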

\begin{proof}
Since $M$ is connected, a primitive of $\alpha$ is unique up to a constant
function with values in $\h^*$. This shows that $\E$ is an affine space over $\h^*$. The group $H$ acts on $\E
\subset C^\infty(M,\h^*)$ by 
$(h. f)(m)=\Ad_h \big(f(h^{-1}. m)\big)$; hence the difference of two elements transforms under the coadjoint action. 
For $f\in \E$ we have 
\[ \l\xi.f,\eta\r=\l \L_{\xi_M} f+\ad_\xi f,\eta\r=-\omega(\xi_M,\eta_M)
-\l f,[\xi,\eta]\r
,\] 
which is skew-symmetric in $\xi,\eta$. To verify that 
$\Phi$ (as in the proposition) is a moment map, fix $m_0\in M$.
Then $\Phi(m_0)(m)=\int_{m_0}^m \alpha=-\Phi(m)(m_0)$. Hence, the map  $m\mapsto \Phi(m)|_{m_0}$ is a primitive for $-\alpha$. This shows 
$\l\d\Phi,\xi\r=-\alpha(\xi)=-\omega(\xi_M,\cdot)$. 
\end{proof}

Note that $\E$, and hence the central extension of $\h$, is determined by the pullback of $\omega$ to any $G$-invariant submanifold of $M$. For example, if the action has a fixed point, or more generally if it admits an invariant isotropic submanifold, then the central extension is trivial, and the action admits an ordinary $\h^*$-valued moment map.


The constructions above apply to infinite-dimensional settings, provided that one has a reasonable notion of smooth dual. 
In particular, the affine action of $\Diff_+(\CC)$ on the space $\on{Hill}(\CC)$ of Hill operators (Section \ref{subsec:hill}) 
defines a central extension of $\Vect(\CC)$, the Virasoro algebra.

\section{Gauge theory constructions} \label{app:AB}
In this appendix, we review the Atiyah-Bott construction for principal $G$-bundles $P\to \Sigma$ over oriented surfaces with boundary. Here 
$G$ is any Lie group with an invariant inner product on its Lie algebra (we are mainly interested in the case $G=\PSL(2,\R)$).  

\subsection{Atiyah algebroid, connections}\label{subsec:atiyah}
Let $P\to M$ be a principal $G$-bundle. The groups of gauge transformations and automorphisms are denoted $\Gau(P)\subset \Aut(P)$. 
The \emph{Atiyah algebroid} $\on{At}(P)=TP/G\to M$ is the Lie algebroid whose sections are the 
$G$-invariant vector fields on $P$, that is, infinitesimal automorphisms. It fits into the exact sequence of Lie algebroids
\begin{equation}\label{eq:Atiyahsequence} 0\lra \g(P)\lra \At(P) \stackrel{\a}{\lra} TM\lra 0,\end{equation}
with the adjoint bundle $\g(P)=P\times_G \g$. On the level of sections, this is the exact sequence 
$0\to \gau(P)\to \aut(P)\to \on{Vect}(M)\to 0$. 
A principal connection $\theta\in \Omega^1(P,\g)^G$ is equivalent to a vector bundle splitting of the Atiyah sequence. 
This may be described by either of the bundle maps 
\[ s^\theta\colon \At(P)\to \g(P),\ \ \ j^\theta\colon TM\to \At(P)\]
called \emph{vertical projection} and \emph{horizontal lift}; thus $s^\theta|_{\g(P)}=\on{id}_{\g(P)}$, 
$\a\circ j^\theta=\on{id}_{TM},\ s^\theta\circ j^\theta=0$.  The section $s^\theta(v)$ corresponds to $\iota_v\theta$ under the identification  $\g(Q)\cong \Omega^0(Q,\g)^G$. 
We denote by 
\[ \d^\theta\colon \Omega^p(M,\g(P))\to  \Omega^{p+1}(M,\g(P))\]
the covariant derivative; in terms of the identification of 
forms $\beta\in\Omega^\bullet(M,\g(P))$ with $G$-basic forms 
$\wt{\beta}\in \Omega^\bullet(P,\g)$ we have $\wt{\d^\theta\beta}=\d\wt\beta+[\theta,\wt{\beta}]$. The curvature $F^\theta\in \Omega^2(M,\g(P))$ 
corresponds to the basic form $\wt{F}^\theta=\d\theta+\hh[\theta,\theta]$. 

%
The set $\A(P)$  of principal connections is an affine space over  
$\Omega^1(M,\g(P))$: given a smooth family of connections $\theta_t$ with $\theta_0=\theta$, the corresponding tangent vector 
$\beta\in \Omega^1(M,\g(P))$ is determined by either one of the equations
\[ \f{d}{d t}|_{t=0}\theta_t=\wt{\beta},\ \ \ 
\f{d}{d t}|_{t=0}s^{\theta_t}=\beta\circ \az,\ \ \   \f{d}{d t}|_{t=0}j^{\theta_t}=-\beta.\]

\begin{lemma} The generating vector fields for the $\Aut(P)$-action on $\A(P)$ are given by
	\begin{equation}\label{eq:genvf}
	 v_{\A(P)}|_\theta=-\d^\theta(s^\theta(v))-\iota_{\a(v)}F^\theta,\ \ v\in\mf{aut}(P)=\Gamma(\At(P)).\end{equation}	
	In particular, for 	 $v\in \Gamma(\g(P))=\Omega^0(M,\g(P))$, the generating vector field is $-\d^\theta v$. 
\end{lemma}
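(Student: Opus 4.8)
The plan is to compute the generating vector field of the $\Aut(P)$-action on $\A(P)$ directly from the definition of the action, using the three equivalent characterizations of tangent vectors to $\A(P)$ given in the lemma just above (in terms of $\theta_t$, $s^{\theta_t}$, or $j^{\theta_t}$). I will work with the description via connection $1$-forms on $P$: an automorphism $F \in \Aut(P)$ acts on connections by pullback, $F \cdot \theta = (F^{-1})^*\theta$, so for a one-parameter group $F_t$ generated by $v \in \aut(P) = \Gamma(\At(P))$ — viewed as a $G$-invariant vector field $\wt v$ on $P$ — the generating vector field at $\theta$ is $v_{\A(P)}|_\theta = -\tfrac{d}{dt}|_{t=0}(F_t)^*\theta = -\L_{\wt v}\theta$, the Lie derivative of the connection $1$-form along $\wt v$. (The sign matches the convention in the footnote: $(\xi_M f)(m) = \tfrac{d}{dt}|_{t=0} f(\exp(-t\xi).m)$, so pullback by $F_{-t}$.)

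\textbf{Key steps.} First I would rewrite $\L_{\wt v}\theta$ using Cartan's magic formula, $\L_{\wt v}\theta = \iota_{\wt v}\d\theta + \d(\iota_{\wt v}\theta)$. Next I would express both terms in Atiyah-algebroid language. The contraction $\iota_{\wt v}\theta \in \Omega^0(P,\g)^G$ is exactly the basic form corresponding to the vertical projection $s^\theta(v) \in \Gamma(\g(P)) = \Omega^0(M,\g(P))$, as recalled in the excerpt (``the section $s^\theta(v)$ corresponds to $\iota_v\theta$''). For the first term, I would decompose $\wt v$ into its vertical and horizontal parts relative to $\theta$: the vertical part is the fundamental vector field of $s^\theta(v)$, and the horizontal part is $j^\theta(\a(v))$, the horizontal lift of the base vector field $\a(v) \in \Vect(M)$. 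Contracting $\d\theta$ against the vertical part produces (after using $\d\theta = \wt{F}^\theta - \hh[\theta,\theta]$ and the structure equations) a term combining with $\d(\iota_{\wt v}\theta)$ to give precisely $\wt{\d^\theta(s^\theta(v))}$, the basic form representing the covariant derivative $\d^\theta(s^\theta(v))$; contracting $\d\theta$ against the horizontal part $j^\theta(\a(v))$ picks out only the curvature term $\wt{F}^\theta$ (the bracket term vanishes on horizontal vectors since $\theta$ annihilates them), yielding $\iota_{\a(v)}F^\theta$. Collecting signs gives $v_{\A(P)}|_\theta = -\d^\theta(s^\theta(v)) - \iota_{\a(v)}F^\theta$, as claimed. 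The special case $v \in \Gamma(\g(P))$ is immediate: then $\a(v) = 0$ and $s^\theta(v) = v$, so the formula reduces to $-\d^\theta v$ — this is just the infinitesimal version of the standard gauge transformation law $g \bullet \theta = \Ad_g\theta - (\d g)g^{-1}$.

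\textbf{Main obstacle.} The only real care needed is bookkeeping: keeping straight the identification between $G$-equivariant/basic forms on $P$ and $\g(P)$-valued forms on $M$, making sure the formula $\wt{\d^\theta\beta} = \d\wt\beta + [\theta,\wt\beta]$ is applied with the right argument, and tracking the two sign conventions at play (the pullback convention for generating vector fields, and the sign in $\tfrac{d}{dt}|_{t=0} j^{\theta_t} = -\beta$). A clean way to avoid errors is to verify the formula first on the two ``extreme'' cases — $v$ vertical (where it must reproduce $-\d^\theta v$) and $v = j^\theta(\xi)$ a horizontal lift (where $s^\theta(v) = 0$ and the answer must be $-\iota_\xi F^\theta$) — and then observe that both sides of \eqref{eq:genvf} are affine in the splitting data and agree on a spanning set of $\At(P)$, so they agree in general. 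Alternatively, one can simply differentiate the finite gauge-transformation formula combined with a diffeomorphism, but the Cartan-calculus argument above is the most transparent.
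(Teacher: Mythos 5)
Your proposal is correct and follows essentially the same route as the paper: interpret $v$ as a $G$-invariant vector field $\wt v$ on $P$, apply Cartan's magic formula to $\L_{\wt v}\theta$, and use $\d\theta=\wt F^\theta-\hh[\theta,\theta]$ together with the identification of $\iota_{\wt v}\theta$ with $s^\theta(v)$ to regroup the terms as $\wt{\d^\theta(s^\theta(v))}+\wt{\iota_{\a(v)}F^\theta}$. The paper does this in one line without splitting $\wt v$ into vertical and horizontal parts (the curvature term automatically only sees $\a(v)$ because $\wt F^\theta$ is horizontal), so your extra decomposition is harmless but not needed.
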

\begin{proof}
By our sign convention for generating vector fields, $v_{\A(P)}|_\theta=-v.\theta$. 	
Regarding $v$ as a $G$-invariant vector field $\wt{v}$ on $P$, we have 
\[ 	\wt{v.\theta}=\L_{\wt{v}}\theta=\d \iota_{\wt{v}}\theta+\iota_{\wt{v}}\d\theta=
(\d+[\theta,\cdot]) \iota_{\wt{v}}\theta+\iota_{\wt{v}} \wt{F}^\theta
=\wt{\d^\theta(s^\theta(v))}+\wt{\iota_{\a(v)}F^\theta}.\qedhere\]
\end{proof}

\subsection{Trivializations}
Suppose $P\to M$ admits a section $\iota\colon M\to P$, defining a trivialization
$P\cong M\times G$ such that $\iota(m)=(m,e)$ with the principal action $a. (m,g)=(m,ga^{-1})$. 
Connections $\theta\in \A(P)$ are described in terms of their connection 1-forms $A=\iota^*\theta$ by 
\[ \theta=\Ad_{g^{-1}}(A)+\pr_2^*\theta^L\]
The trivial connection (given by $\theta=\pr_2^*\theta^L$) gives a splitting of the Atiyah algebroid 
$\At(P)\cong TM\oplus \g(P)$. The trivialization of $P$ determines a trivialization of all its associated bundles, and in particular 
gives an isomorphism
\begin{equation}\label{eq:iso1} \g(P)\cong M\times \g.
\end{equation}
We have 
\[ s^\theta(X,\xi)=\xi+\iota_X A,\ \ j^\theta(X)=(X,-\iota_X A).\] 
The covariant derivative is the operator on $\Omega^\bullet(M,\g(P))\cong \Omega^\bullet(M,\g)$ given by $\d_A=\d+[A,\cdot]$; the curvature is $F_A=\iota^* \wt{F}^\theta=\d A+\hh [A,A]$. 

\begin{remark}[Signs, I]\label{rem:signs1}
We stress that the isomorphism 
\begin{equation}\label{eq:iso2}\gau(P)=C^\infty(M,\g)\end{equation} 
given by \eqref{eq:iso1} differs by sign from the `standard' identification as the Lie algebra of  
$\Gau(P)=C^\infty(M,G)$ (with pointwise multiplication). In fact, \eqref{eq:iso2} takes a function $\xi\in C^\infty(M,\g)$ to the vertical vector field whose restriction to $\{m\}\times G$ is $\xi(m)^R$. In particular, \eqref{eq:iso2} induces \emph{minus} the pointwise Lie bracket on $C^\infty(M,\g)$. More generally, 
$\mf{aut}(P)=C^\infty(M,\g)\rtimes \on{Vect}(M)$ with the bracket 
\[ [(\xi,X),(\eta,Y)]=(-[\xi,\eta],\L_X\eta-\L_Y\xi).\]
\end{remark}


\subsection{Central extensions}\label{app:A3}
Suppose  $Q\to \CC$ is a principal $G$-bundle over a compact oriented 1-manifold, and that the Lie algebra $\g$ carries an 
invariant metric (denoted by a dot).  Then $\g(Q)$ inherits a bundle metric. The affine space $\A(Q)$
of connections has $\gau(Q)^*=\Omega^1(\CC,\g(Q))$ as its underlying linear space, 
where the pairing with $\gau(Q)=\Omega^0(\CC,\g(Q))$ is given by the metric followed by integration. This identification takes 
the linear part of the gauge action to the coadjoint action. Furthermore, $\l \xi.\theta,\eta\r=-\l \d^\theta\xi,\eta\r$ 
satisfies the skew-symmetry condition 
\eqref{eq:skew}. By the discussion of Appendix \ref{app:A1}, this defines a 
central extension of $\gau(Q)$.  

\begin{remark}[Signs, II]\label{rem:signs2} For trivial bundles $Q=\CC\times G$, one often uses the 
`standard' identification $\gau(Q)\cong C^\infty(\CC,\g)$ to define the pairing. As explained in Remark \ref{rem:signs1}, this is opposite to the identification coming from $\g(Q)\cong \CC\times \g$, and hence results in the opposite pairing. 
\end{remark}

We may also consider the larger group $\Aut(Q)$ of all principal bundle automorphisms, with Lie algebra 
$\aut(Q)=\Gamma(\At(Q))$; its smooth dual is  $\aut(Q)^*=\Omega^1(\CC,\At(Q)^*)$. 
Let\begin{equation}\label{eq:EQ} \E(Q)\subset \Gamma(\on{Sym}^2 \At(Q)^*)\end{equation}be the affine space of all fiberwise quadratic forms on $\At(Q)$ whose restriction to $\g(Q)$ is given by $\xi\mapsto \hh \xi\cdot\xi$.  
The underlying linear space consists of quadratic forms on $\At(Q)$ whose restriction to $\g(Q)$ is zero; it is identified with 
the space $\Omega^1(\CC,\At(Q)^*)$, where an element $\gamma$ of this space defines the quadratic form $w\mapsto \l \gamma(\a(w)),w\r$. 
The group $\Aut(Q)$ acts on $\E(Q)$, and the underlying linear action is 
coadjoint action. One may also check that it satisfies the skew-symmetry property \eqref{eq:skew}; hence one obtains a 
central extension of $\aut(Q)$. 

\begin{remark}
	There is a natural map \[ \E(Q)\to \A(Q),\ \ \phi\mapsto \theta\] 
given by $s^\theta(v)\cdot\xi=-2\phi(v,\xi)$ for $v\in \Gamma(\At(Q),\ \xi\in \g(Q)$
(where we think of $\phi$ as a symmetric bilinear form).  
This map is affine with respect to the quotient map $\Omega^1(\CC,\At(Q)^*)\to \Omega^1(\CC,\g(Q)^*)$, 
and determines a Lie algebra morphism $\wh{\gau}(Q)\to \wh{\aut}(Q)$ lifting the natural inclusion. 
\end{remark}
\begin{remark}
There is a natural 
$\Aut(Q)$-equivariant section
section \begin{equation}\label{eq:section} \A(Q)\to \E(Q),\ \ \theta\mapsto \hh s^\theta\stackrel{.}{\vee} s^\theta.\end{equation} 
Here $\vee$ denotes the product in the symmetric algebra. The map  \eqref{eq:section} is the moment map for the $\Aut(Q)$-action on $\A(Q)$, in the sense that it 
restricts to moment maps  on the coadjoint orbits $\O\subset \A(Q)$.

For a trivial bundle $Q=\CC\times G$, we have $\A(Q)=\Omega^1(\CC,\g)$ and $\E(Q)=\Omega^1(\CC,\g)\times |\Omega|^2_\CC$; 
in terms of these identifications the map is 
\[ A\mapsto (A,\hh A\cdot A).\] 
It is gauge equivariant for the action $h.(A,q)=(h\bullet A,\,q+A\cdot h^*\theta^L-\hh h^*\theta^L\cdot h^*\theta^L$). 

\end{remark}

 \subsection{Atiyah-Bott}
We now assume that $\Sigma$ is a compact oriented surface, possibly with boundary, and that the Lie algebra 
$\g=\on{Lie}(G)$ comes with an invariant metric. Let $P\to \Sigma$ be a principal $G$-bundle.

\begin{definition}
The \emph{Atiyah-Bott form} on $\A(P)$ is given by 
\[ \omega_{AB}(a,b)=\int_{\Sigma} a\cdot b,\ \ \ \ a,b\in \Omega^1(\Sigma,\g(P)).\]
\end{definition}
The Atiyah-Bott form  $\omega_{AB}$  is closed, by translation invariance, and nondegenerate in the weak sense that $\omega_{AB}(a,\cdot)=0 \Leftrightarrow a=0$. The 2-form
is invariant under the action of the group $\bb\!\!\Aut_+(P)$ of principal bundle automorphisms whose base map lies in $\bb\Diff_+(\Sigma)$. We are interested in a moment map for this action. The Lie algebra  $\bb\!\!\mf{aut}(P)$ consists of 
$G$-invariant vector fields on $P$ that are tangent to $\p P$; these are the sections of a Lie algebroid 
$\bb\!\!\At(P)$.
Let $\p P=P|_{\p\Sigma}$ the boundary restriction of 
$P$. For a connection $\theta$, denote by $\p\theta$ its pullback to a connection on $P$. 

\begin{proposition}
	The action of $\bb\!\!\Aut_+(P)$ on $\A(P)$ has moment map 
\[ \A(P)\to \Omega^2(\Sigma,\At(P)^*)\times \E(\p P),\ \theta\mapsto (-F^\theta\cdot s^\theta,\hh s^{\p\theta}\stackrel{.}{\vee} s^{\p\theta}).\]	
For the action of $\Gau(P)$, the moment map takes values in $\Omega^2(\Sigma,\g(P))\times \A(\p P)$, and is given by 
$(-F^\theta,\p\theta)$. 
\end{proposition}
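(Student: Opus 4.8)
The plan is to derive the moment map formula by combining the computation of the generating vector fields (equation~\eqref{eq:genvf}) with integration by parts on $\Sigma$, being careful to track the boundary contribution, which will account for the second factor $\E(\p P)$.

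\textbf{Step 1: Reduce to checking the moment map condition.} Since $\omega_{AB}$ is translation-invariant, to verify that a given map $\Phi\colon \A(P)\to \Omega^2(\Sigma,\At(P)^*)\times \E(\p P)$ is a moment map for $\bb\!\!\Aut_+(P)$ it suffices to check $\bb\!\!\Aut_+(P)$-equivariance together with the infinitesimal condition $\omega_{AB}(v_{\A(P)},\cdot) = -\langle \d\Phi, v\rangle$ for all $v\in \bb\!\!\aut(P) = \Gamma(\bb\!\!\At(P))$. Equivariance I would treat briefly: both $-F^\theta\cdot s^\theta$ and $\hh s^{\p\theta}\vee s^{\p\theta}$ are manifestly natural under bundle automorphisms, and the affine structures on $\Omega^2(\Sigma,\At(P)^*)$ and $\E(\p P)$ are set up in Appendix~\ref{app:A3} precisely so that the linear parts are the coadjoint actions.

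\textbf{Step 2: The infinitesimal condition.} Fix $\theta\in \A(P)$, a tangent vector $b\in \Omega^1(\Sigma,\g(P)) = T_\theta\A(P)$, and $v\in \bb\!\!\aut(P)$. By \eqref{eq:genvf}, $v_{\A(P)}|_\theta = -\d^\theta(s^\theta(v)) - \iota_{\a(v)}F^\theta$, so
\[
\omega_{AB}(v_{\A(P)}|_\theta, b) = -\int_\Sigma \d^\theta(s^\theta(v))\cdot b \;-\; \int_\Sigma (\iota_{\a(v)}F^\theta)\cdot b.
\]
For the first integral I would use the Leibniz rule $\d\big(s^\theta(v)\cdot b\big) = \d^\theta(s^\theta(v))\cdot b - s^\theta(v)\cdot \d^\theta b$ (the sign on the last term because $b$ is a $1$-form), giving by Stokes
\[
-\int_\Sigma \d^\theta(s^\theta(v))\cdot b = -\int_\Sigma s^\theta(v)\cdot \d^\theta b \;-\; \int_{\p\Sigma} s^{\p\theta}(\p v)\cdot \p b.
\]
Now I compute $\langle \d\Phi, v\rangle$ on the same data. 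The variation of $\theta\mapsto -F^\theta\cdot s^\theta$ in the direction $b$ has two pieces: $\d F^\theta(b) = \d^\theta b$ and $\d s^\theta(b) = b\circ\a$ (from Appendix~\ref{subsec:atiyah}); pairing the first against $s^\theta(v)$ recovers $-\int_\Sigma s^\theta(v)\cdot \d^\theta b$, while the second gives $-\int_\Sigma F^\theta\cdot (b\circ\a)(v) = -\int_\Sigma (\iota_{\a(v)}F^\theta)\cdot b$, matching the bulk term above. The variation of the boundary term $\theta\mapsto \hh s^{\p\theta}\vee s^{\p\theta}$ in the direction $b$, paired with $\p v$, is $s^{\p\theta}(\p v)\cdot \d s^{\p\theta}(\p b) = s^{\p\theta}(\p v)\cdot(\p b\circ \a)$; integrating over $\p\Sigma$ this is exactly the boundary term $\int_{\p\Sigma} s^{\p\theta}(\p v)\cdot \p b$ produced by Stokes. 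Collecting signs, $\omega_{AB}(v_{\A(P)},b) = -\langle \d\Phi, v\rangle$, as required.

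\textbf{Step 3: Specialize to $\Gau(P)$.} For $v\in \gau(P) = \Omega^0(\Sigma,\g(P))$ we have $\a(v)=0$, so the curvature term drops and $s^\theta(v) = v$; the bulk part of the moment map becomes $\theta\mapsto -F^\theta$ pairing via $\int_\Sigma (\cdot)\cdot v$, i.e.\ the $\Omega^2(\Sigma,\g(P))$-valued map $\theta\mapsto -F^\theta$. On the boundary, $\gau(P)$ restricts into $\gau(\p P)$, on which $\E(\p P)$ pulls back to $\A(\p P)$ via the map of Appendix~\ref{app:A3} (the one sending $\phi$ to $\theta$ with $s^\theta(v)\cdot\xi = -2\phi(v,\xi)$); under this pullback $\hh s^{\p\theta}\vee s^{\p\theta}$ goes to $\p\theta$. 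This gives the stated $(-F^\theta, \p\theta)$.

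\textbf{Main obstacle.} The genuinely delicate point is the sign bookkeeping in Step~2 — in particular the interplay between the sign convention for generating vector fields (footnote in Appendix~\ref{app:A2}), the convention $\wt{\d^\theta\beta} = \d\wt\beta + [\theta,\wt\beta]$, the orientation of $\p\Sigma$ in Stokes' theorem, and the identification of $\gau(P)$ with $C^\infty(\Sigma,\g)$ differing by a sign from the naive one (Remark~\ref{rem:signs1}). I would fix all conventions at the outset and carry the signs symbolically rather than numerically; once the bulk term is seen to cancel against $-F^\theta\cdot s^\theta$, consistency forces the boundary term to be exactly $\hh s^{\p\theta}\vee s^{\p\theta}$, so the only real work is confirming that this Stokes boundary term is the correct element of $\E(\p P)$ and not, say, its negative or a multiple.
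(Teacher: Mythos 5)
Your strategy coincides with the paper's: evaluate $\omega_{AB}(v_{\A(P)}|_\theta,b)$ via the generating vector field formula \eqref{eq:genvf}, integrate by parts to isolate a boundary contribution, and match the result against the variation of the proposed moment map. The difficulty is that the sign bookkeeping in Step~2 --- which you yourself identify as the entire content of the proof --- goes wrong in two places, and as written your chain of equalities is internally inconsistent with your stated conclusion.

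First, the Leibniz rule: since $s^\theta(v)$ is a $0$-form, one has $\d\big(s^\theta(v)\cdot b\big)=\d^\theta\big(s^\theta(v)\big)\cdot b+s^\theta(v)\cdot\d^\theta b$ with \emph{no} sign; the Koszul sign is governed by the degree of the first factor, not by the fact that $b$ is a $1$-form. Hence Stokes gives $-\int_\Sigma \d^\theta(s^\theta(v))\cdot b=+\int_\Sigma s^\theta(v)\cdot\d^\theta b-\int_{\pS}s^{\p\theta}(\p v)\cdot\p b$, with a plus on the bulk term rather than the minus you wrote. Second, on a surface $F^\theta\cdot b$ is a $3$-form and hence vanishes, so $0=\iota_{\a(v)}(F^\theta\cdot b)=(\iota_{\a(v)}F^\theta)\cdot b+F^\theta\cdot(\iota_{\a(v)}b)$ and therefore $-\int_\Sigma F^\theta\cdot(\iota_{\a(v)}b)=+\int_\Sigma(\iota_{\a(v)}F^\theta)\cdot b$, again opposite to the identity you assert. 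With your two signs, the bulk of $\omega_{AB}(v_{\A(P)},b)$ comes out \emph{equal} to the bulk of $\langle\d\Phi,v\rangle$, while the boundary pieces come out as negatives of one another; no single overall sign then makes $\Phi$ a moment map, so the closing sentence ``collecting signs, $\omega_{AB}(v_{\A(P)},b)=-\langle\d\Phi,v\rangle$'' does not follow from what precedes it. Once both signs are corrected, the bulk and boundary pieces consistently satisfy $\omega_{AB}(v_{\A(P)},\cdot)=-\langle\d\Phi,v\rangle$ and the argument closes exactly as in the paper; Steps~1 and~3 are fine as they stand (the paper likewise gives no separate argument for the $\Gau(P)$ statement).
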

\begin{proof}
	Let $v\in \bb\aut(P)=\Gamma(\bb\!\!\At(P))$, with boundary restriction $\p v$. 
	Given  $b\in T_\theta\A(P)=\Omega^1(\Sigma,\g(P))$, we have, using \eqref{eq:genvf},  
	\begin{align*} \omega_{AB}(v^\sharp|_\theta,\, b)&=
	\int_\Sigma v^\sharp|_\theta\cdot b\\
	&=-\int_\Sigma \iota_{\a(v)}F^\theta\cdot b-\int_\Sigma \d^\theta(s^\theta(v))\cdot b\\
	&=\int_\Sigma \iota_{\a(v)} b\cdot 
		F^\theta +
	\int_\Sigma s^\theta(v)\cdot \d^\theta b-\int_{\pS} s^{\p\theta}(\p v)\cdot \p b.
	\end{align*}
	where  $\p b\in T_{\p \theta}\A(\p P)$ is the pullback of $b$ to the boundary. 
	Suppose $b\in T_\theta\A(P)$ is realized as the velocity vector for a family of connections $\theta_t$ with 
	\[ \theta_0=\theta,\ \ \ \f{d}{d t}\Big|_{t=0}\theta_t=\wt{b}.\]
	Then $\f{d}{d t}\big|_{t=0} s^{\theta_t}=b\circ \az,\  \f{d}{d t}\big|_{t=0} F^{\theta_t}=\d^{\theta_t}b$, 
	and therefore 
	\[ \f{d}{d t}\Big|_{t=0}  F^{\theta_t}\cdot s^{\theta_t}
	= (b\circ \az)\cdot F^\theta+s^\theta\cdot \d^\theta b
	.\]
Here $b\circ \az,\ s^\theta$ are regarded as sections of $\At(P)^*\otimes \g(P)$, while $F^\theta,\ \d^\theta b$ are elements of 
$\Omega^2(\Sigma,\g(P))$. 
	For the boundary term, we note 	
	\[ \f{d}{d t}\Big|_{t=0}  \left(\hh s^{\p\theta_t}\stackrel{.}{\vee} s^{\p\theta_t}\right) 
	=s^{\p\theta}\stackrel{.}{\vee}(\p b\circ \az)
	=s^{\p\theta}\cdot\p b\]
	where the last equality comes from the inclusion $\Omega^1(\pS,\At(\p P)^*)\hra \Gamma(\on{Sym}^2(\At(\p P)^*))$. 
	This gives 
	\[  \omega_{AB}(v^\sharp|_\theta,\, b)=
	\f{d}{d t}\Big|_{t=0} \int_\Sigma F^{\theta_t}\cdot s^{\theta_t}(v)-\int_{\pS} \Big(  \f{d}{d t}\Big|_{t=0} \hh s^{\p\theta_t}\vee s^{\p \theta_t}\Big) (\p v,\cdot)
	\Big).\qedhere \]
\end{proof}

\begin{remark}[Signs, III]\label{rem:signs3}
The sign in the moment map 	for $\Gau(P)$ depends on the identifications $\gau(P)\cong \Omega^2(P,\g(P))$ and 
$\gau(\p P)\cong \Omega^1(\p P,\g(\p P))$. For trivial bundles, one often uses the opposite pairing, resulting in a sign change of the moment map. See Remarks \ref{rem:signs1}, \ref{rem:signs2}.
\end{remark}

\section{The trumpet moduli space as a symplectic cross-section}\label{app:groupoid}
Our starting point will be the interpretation of $N$ as a symplectic slice in a symplectic groupoid 
$\ca{G}\rra \on{Hill}(S^1)$, see \cite{al:coad}.  As a groupoid,  $\G\cong \on{Hill}(S^1)\times \Diff_\oz(S^1)$
(an action groupoid), with source and target maps $\sz(T,\FF)=T,\ \tz(T,\FF)=\FF. T$ and groupoid 
multiplication $(T_1,\FF_1)\circ (T_2,\FF_2)=(T_2,\FF_1\circ \FF_2)$. 
In terms of this `left trivialization', the symplectic structure $\omega_\G$ is given by 
\cite[Equation 47]{al:coad}
\begin{equation}\label{eq:lefttrivialization}
\omega_\G=\int_{S^1} \Big( \d T\wedge \f{\d \mathsf{F}}{\mathsf{F}'}+T\, \f{\d \mathsf{F}}{\mathsf{F}'}\wedge 
\big(\f{\d \mathsf{F}}{\mathsf{F}'}\big)'-\f{1}{4} \big(\f{\d \mathsf{F}}{\mathsf{F}'}\big)'''\wedge \big(\f{\d \mathsf{F}}{\mathsf{F}'}\big)\Big).\end{equation}
%
 

For our description of the 2-form for the trumpet, it will be more convenient to work with `right trivialization', expressing the 2-form in terms of $(T_0,\FF)$ where 
\begin{equation}\label{eq:Tform} T=\FF^{-1}. T_0=(\FF')^2 T_0+\hh \S(\FF).\end{equation}

\begin{proposition}
	In right trivialization, 
	\[  \omega_\G=\d \int_{S^1}\ 	\ \Big( T_0 \FF' -\f{1}{4}(\FF')^{-1}\d \FF''\Big)\]
\end{proposition}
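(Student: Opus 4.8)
The plan is to perform a direct change of variables in the known formula \eqref{eq:lefttrivialization} for $\omega_\G$ in left trivialization, passing from the pair $(T,\FF)$ to the pair $(T_0,\FF)$ related by \eqref{eq:Tform}. The key observation is that in left trivialization the natural $1$-form entering the formula is the pullback of the Maurer–Cartan-type form $\d\FF/\FF'$, whereas in right trivialization the relevant object is $\d\FF\cdot\FF'$ (up to sign and reparametrization); these are related by the substitution $x\mapsto\FF(x)$ together with the chain rule. So first I would record the transformation rules for the building blocks under $\FF\mapsto\FF^{-1}$: how $\d T$ transforms given $T=(\FF')^2T_0+\hh\S(\FF)$, and how the $1$-form $\d\FF/\FF'$ and its $S^1$-derivatives transform. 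Concretely, I expect that after the substitution $y=\FF(x)$ one gets $(\d\FF/\FF')(x)$ pulled back to an expression involving $\d\FF(y)\cdot\FF'(y)$ evaluated at $y$, with the measure $|\d x|$ turning into $|\d y|/\FF'$, so that the integral over $S^1$ is reparametrization-covariant.

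The main computational step is to substitute \eqref{eq:Tform} into \eqref{eq:lefttrivialization} and expand. This splits $\omega_\G$ into: a term linear in $T_0$ coming from $(\FF')^2 T_0$ in $\d T\wedge(\d\FF/\FF')$ and in $T(\d\FF/\FF')\wedge(\d\FF/\FF')'$, plus a collection of purely Schwarzian terms coming from the $\hh\S(\FF)$ contribution and from the $-\tf14(\d\FF/\FF')'''\wedge(\d\FF/\FF')$ term. The Schwarzian derivative identity $\S(\FF)=\FF'''/\FF'-\tf32(\FF''/\FF')^2$ will have to be inserted and the resulting expression massaged by repeated integration by parts over $S^1$ (all boundary terms vanish since we integrate over the circle). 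I expect the linear-in-$T_0$ part to collapse to $\d\!\int_{S^1}T_0\FF'$ after one integration by parts — here one uses that $\d(T_0\FF')=\d T_0\wedge\d\cdots$, i.e. that the integrand is already an exact $2$-form on $\G$ once written correctly — and the purely Schwarzian part to collapse to $-\tf14\,\d\!\int_{S^1}(\FF')^{-1}\d\FF''$. The cleanest way to verify the second collapse is to check directly that $\d\bigl[(\FF')^{-1}\d\FF''\bigr]$, integrated over $S^1$, reproduces exactly the Schwarzian terms; this is the same kind of manipulation already carried out in the proof of Theorem \ref{th:trumpet} (the computation of $\iota(v^L)\omega_N$), so I would model the bookkeeping on that argument.

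The hard part will be controlling the integration-by-parts bookkeeping in the purely Schwarzian sector: there are several terms with up to three $x$-derivatives and two de Rham differentials on $\Diff_\oz(S^1)$, and one must be careful that $\d$ (the exterior differential on the diffeomorphism group) commutes with $x$-differentiation and with $\int_{S^1}$, and that wedge signs are tracked correctly when moving $x$-derivatives across $\wedge$. A useful reduction is to note that both sides of the claimed identity are $\Diff_\oz(S^1)$-basic after quotienting appropriately — or, more simply, that it suffices to prove equality of the two $2$-forms, and since both are manifestly closed (each side is visibly $\d$ of a $1$-form, or becomes so after the reduction), one only needs to match them on a spanning set of tangent vectors, e.g. the left-invariant vector fields $v^L$ together with $\partial/\partial T_0$ directions. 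Contracting \eqref{eq:lefttrivialization} with $v^L$ using the formula $\iota(v^L)\d\FF=-\FF'f$ from \cite[Lemma 4.8]{al:coad} (already quoted in the excerpt), and comparing with the contraction of the claimed right-trivialized expression, reduces the whole proposition to a finite identity among densities on $S^1$, which is then routine. I would present the argument in that contraction form to minimize the risk of sign errors, falling back to the brute-force substitution only if the contraction computation proves unwieldy.
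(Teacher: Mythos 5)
Your primary plan --- substitute $T=(\FF')^2T_0+\hh\S(\FF)$ into \eqref{eq:lefttrivialization} and simplify by repeated integration by parts over $S^1$ --- is exactly the paper's proof. The one non-routine ingredient in the paper is the formula for the exterior differential of the Schwarzian derivative (quoted from \cite[Lemma A.2]{al:bos}), which yields the identity $\int_{S^1}\d\S(\FF)\wedge\big(\tfrac{\d\FF}{\FF'}\big)=\int_{S^1}(\FF')^{-2}\,\d\FF'\wedge\d\FF''$; you implicitly acknowledge needing this when you say you must first record how $\d T$ transforms under \eqref{eq:Tform}, so on this route your sketch is sound and matches the paper's.

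The contraction variant that you say you would actually present, however, has a concrete gap as sketched. The two expressions to be compared are $2$-forms on the same space $\G\cong\on{Hill}(S^1)\times\Diff_\oz(S^1)$, but written in two \emph{different} product decompositions, $(T,\FF)$ versus $(T_0,\FF)$: the vector field generating $\FF\mapsto\FF\circ\exp(-tv)$ at fixed $T$ is not the same vector field as the one at fixed $T_0$, since $T_0$ depends on $\FF$. So you cannot contract \eqref{eq:lefttrivialization} using only $\iota(v^L)\d\FF=-\FF'f$ with $\iota(v^L)\d T=0$, contract the right-trivialized formula with $\iota(v^L)\d T_0=0$, and compare: whichever $v^L$ you fix, one of the two contractions picks up a nonzero $\iota(v^L)\d T$ (or $\iota(v^L)\d T_0$) term obtained by differentiating the cocycle relation \eqref{eq:Tform} along the flow --- which is again precisely the $\d\S(\FF)$ computation. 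Omitting that cross term, as your sketch does, would produce a false ``identity among densities''; including it, the contraction form saves no work over the direct substitution. So keep the fallback as the actual proof. (A minor point affecting your phrase ``collapse to $\d\int_{S^1}T_0\FF'$'': that expression is a $1$-form on $\G$, not a $2$-form; the first term must be $T_0\FF'\,\d\FF$, which is also what is needed for the specialization $T_0=-\tfrac{\ell^2}{4}$ to reproduce \eqref{eq:omegan}.)
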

\begin{proof}
	This is based on a straightforward but lengthy calculation, substituting \eqref{eq:Tform} and simplifying. Here are some relevant steps. 
	Using the formula for the exterior differential of the Schwarzian derivative 
	(\cite[Lemma A.2]{al:bos}) one shows 
	\[ \int_{S^1}\  \d\S(\FF)\wedge \big(\f{\d \mathsf{F}}{\mathsf{F}'}\big)=
	\int_{S^1}\  \f{\d \FF'\wedge \d\FF''}{(\mathsf{F}')^2}
	\]
	This then implies
	\[ \int_{S^1}\ \d T\wedge \f{\d \mathsf{F}}{\mathsf{F}'}
	=\int_{S^1}\  \Big(\FF' \d T_0\wedge \d \mathsf{F}+2T_0\,\d \FF'\wedge \d \FF
	+\f{1}{2} \f{\d \FF'\wedge \d\FF''}{(\mathsf{F}')^2}\Big)\]
	Furthermore, 
	\[ 	\int_{S^1}\ T\, \f{\d \mathsf{F}}{\mathsf{F}'}\wedge 
	\big(\f{\d \mathsf{F}}{\mathsf{F}'}\big)'=\int_{S^1}\  \Big(T_0\,\d \FF'\wedge \d \FF 
	+\f{1}{2} \S(\FF)\f{\d \FF\wedge \d\FF'}{(\mathsf{F}')^2}\Big)\]
	and 
	\[-\f{1}{4} \int_{S^1}\  \big(\f{\d \mathsf{F}}{\mathsf{F}'}\big)'''\wedge \big(\f{\d \mathsf{F}}{\mathsf{F}'}\big)
	= \int_{S^1}\  \Big(-\f{1}{4} \f{\d \FF'\wedge \d\FF''}{(\mathsf{F}')^2}
	-\f{1}{2} \S(\FF)\f{\d \FF\wedge \d\FF'}{(\mathsf{F}')^2}
	\Big).
	\]
	Adding these three contributions, the formula for $\omega_\G$ follows. 
\end{proof}

Passage to the slice $N=\R_{>0}\times \Diff_\oz(S^1)$ amounts to putting $T_0(x)=-\f{\ell^2}{4}$, resulting in the formula \eqref{eq:omegan}.

\end{appendix}

\bibliographystyle{amsplain} 

\def\cprime{$'$} \def\polhk#1{\setbox0=\hbox{#1}{\ooalign{\hidewidth
			\lower1.5ex\hbox{`}\hidewidth\crcr\unhbox0}}} \def\cprime{$'$}
\def\cprime{$'$} \def\cprime{$'$} \def\cprime{$'$} \def\cprime{$'$}
\def\polhk#1{\setbox0=\hbox{#1}{\ooalign{\hidewidth
			\lower1.5ex\hbox{`}\hidewidth\crcr\unhbox0}}} \def\cprime{$'$}
\def\cprime{$'$} \def\cprime{$'$} \def\cprime{$'$} \def\cprime{$'$}
\providecommand{\bysame}{\leavevmode\hbox to3em{\hrulefill}\thinspace}
\providecommand{\MR}{\relax\ifhmode\unskip\space\fi MR }
\providecommand{\MRhref}[2]{%
	\href{http://www.ams.org/mathscinet-getitem?mr=#1}{#2}
}
\providecommand{\href}[2]{#2}

\end{document}